
\documentclass[a4paper,11pt]{article}
\usepackage{amsmath}
\usepackage{amsfonts}
\usepackage{amssymb}
\usepackage{mathrsfs}
\usepackage{amscd}
\usepackage{pb-diagram}
\usepackage{amsthm}
\usepackage{color}
\usepackage[all]{xy}
\usepackage{graphicx}
\usepackage{url}
\usepackage{enumerate}
\usepackage{tikz-cd}
\numberwithin{equation}{section} 
\usepackage{caption}
\DeclareUnicodeCharacter{2212}{-}
\usepackage{tikz}

\usepackage{titlesec}
\titleformat{\subsection}[runin]{\normalsize\bfseries}{\thesubsection}{5pt}{}

\usepackage{tikz}          
\usetikzlibrary{matrix, arrows, decorations.pathmorphing}

\newcommand{\moment}{
\textup{\textbf{J}}
}

\setlength{\oddsidemargin}{0pt}         
\setlength{\evensidemargin}{9pt}        
\setlength{\textwidth}{460pt}   

\author{Mathieu Molitor
\\
\small{\it{e-mail:}}\,\,\url{pergame.mathieu@gmail.com}}
\title{Weyl group of the group of holomorphic isometries of a K\"{a}hler toric manifold\\
}
\date{}

\bibliographystyle{alpha}

\begin{document}

\theoremstyle{definition}
\newtheorem{lemma}{Lemma}[section]
\newtheorem{definition}[lemma]{Definition}
\newtheorem{proposition}[lemma]{Proposition}
\newtheorem{corollary}[lemma]{Corollary}
\newtheorem{theorem}[lemma]{Theorem}
\newtheorem{remark}[lemma]{Remark}
\newtheorem{example}[lemma]{Example}
\newtheorem{setting}[lemma]{Setting }

\bibliographystyle{alpha}

\maketitle 


\begin{abstract}

	We compute the Weyl group (in the sense of Segal) of the group of holomorphic 
	isometries of a K\"{a}hler toric manifold with real analytic K\"{a}hler metric. 
\end{abstract}




\section{Introduction}

	Let $N$ be a K\"{a}hler toric manifold, that is, a compact, connected K\"{a}hler manifold $N$ 
	equipped with an effective Hamiltonian and isometric torus action $\Phi:\mathbb{T}^{n}\times N\to N$, 
	where $n$ is the complex dimension of $N$ and $\mathbb{T}^{n}=\mathbb{R}^{n}/\mathbb{Z}^{n}$ is the $n$-dimensional real torus. 
	We assume that the K\"{a}hler metric $g$ is real analytic. 
	Let $\textup{Aut}(N,g)$ be the group of holomorphic isometries of $N$ endowed with the compact-open topology;
	it is a compact Lie group. 

	The present paper is concerned with the explicit computation of the Weyl group of $\textup{Aut}(N,g)$. 
	Because $\textup{Aut}(N,g)$ can fail to be connected, we adopt of definition of the Weyl group 
	given by Segal \cite{segal} for compact, but not necessarily connected Lie groups, that we now recall. 
	Let $G$ be a compact Lie group, not necessarily connected. A closed subgroup $S\subseteq G$ is called a \textit{Cartan subgroup}
	if it contains an element whose powers are dense in $S$, and if $S$ is of finite index in its normalizer 
	$N(S)=\{g\in G\,\,\vert\,\,gSg^{-1}=S\}$. The finite group $W(S)=N(S)/S$ is, by definition, the \textit{Weyl group} of $S$ 
	(in the sense of Segal). If $G$ is connected, then Cartan subgroups are precisely the 
	maximal tori of $G$ (see, e.g., \cite{tammo}, Chapter IV). Therefore 
	Cartan subgroups are natural generalizations of maximal tori for 
	nonconnected compact Lie groups. 

	Returning to our main concern, the group $\textup{Aut}(N,g)$,
%
	we show that the group $S\subset \textup{Aut}(N,g)$
	of transformations of $N$ 
	associated to the torus action is a Cartan subgroup of 
	$\textup{Aut}(N,g)$. Our main theorem is a description of the corresponding 
	Weyl group $W=W(S)$ (Theorem \ref{nefknkenknkwndk}). Our results are expressed in the language of 
	\textit{dually flat manifolds}, with which not all readers may be familiar.
	A dually flat manifold is a triple $(M,h,\nabla)$, where $M$ is a manifold, 
	$h$ is a Riemannian metric on $M$ and $\nabla$ is a flat connection 
	on $M$ (not necessarily the Riemannian connection) whose dual connection 
	is also flat (see Section \ref{nkwwnkwnknknfkn}). Let $\textup{Diff}(M,h,\nabla)$ be the group of 
	isometries of $(M,h)$ that are also affine with respect to $\nabla$. Our main result says that $W$ 
	can be realized as a group of the form $\textup{Diff}(M,h,\nabla)$, where $(M,h,\nabla)$ is an appropriate dually flat manifold. 
	The latter may appear in different guises and forms, leading to different (but isomorphic) 
	realizations of $W$. Let us look at the following two examples, that are particular instances of our main theorem.

	\noindent\textbf{Symplectic realization.} Let $\moment:N\to \mathbb{R}^{n}\cong \textup{Lie}(\mathbb{T}^{n})^{*}$ be a momentum map 
	for the torus action, and let $\Delta=\moment(N)\subset \mathbb{R}^{n}$ be the corresponding 
	momentum polytope. Let $\Delta^{\circ}$ denote the topological interior of $\Delta$.
	On the open set $N^{\circ}$ of points $p\in N$ where the torus action is free, 
	$\moment$ is a $\mathbb{T}^{n}$-invariant submersion and hence 
	there is a unique Riemannian metric $k$ on $\Delta^{\circ}=\moment(N^{\circ})$ 
	that turns $\moment:(N^{\circ},g)\to (\Delta^{\circ},k)$ into a Riemannian 
	submersion. Let $\nabla^{\textup{flat}}$ be the canonical flat connection on $\mathbb{R}^{n}$ and let $\nabla^{k}$ be the 
	dual connection of $\nabla^{\textup{flat}}$ on $\Delta^{\circ}$ with respect to $k$. 
	The triple $(\Delta^{\circ},k,\nabla^{k})$ is a dually flat manifold (this follows from the well-known 
	fact that $k$ is the Hessian of some potential 
	$\varphi:\Delta^{\circ}\to \mathbb{R}$). Then $W$ is isomorphic 
	to $\textup{Diff}(\Delta^{\circ},k,\nabla^{k})$. 
	One may regard this isomorphism as a \textit{symplectic realization} of $W$. 

	\noindent\textbf{Holomorphic realization.} Another way to describe $W$ is by using the holomorphic extension 
	$\Phi^{\mathbb{C}}:(\mathbb{C}^{*})^{n}\times N\to N$ of the torus action, where $(\mathbb{C}^{*})^{n}$ 
	is the algebraic torus (see, e.g., \cite{ishida,molitor-toric}). Given $p\in N^{\circ}$, 
	the map $(\mathbb{C}^{*})^{n}\to N^{\circ}$, $z\mapsto \Phi^{\mathbb{C}}(z,p)$ is a biholomorphism and hence 
	one may identify $N^{\circ}$ with $(\mathbb{C}^{*})^{n}$. 
	Let $\sigma:(\mathbb{C}^{*})^{n}\to\mathbb{R}^{n}$ be defined by $\sigma(z)=(\tfrac{\ln(|z_{1}|)}{2\pi},..., 
	\tfrac{\ln(|z_{n}|)}{2\pi})$, where $z=(z_{1},...,z_{n})\in (\mathbb{C}^{*})^{n}$. It can be shown 
	that there is a Riemannian metric $h$ on $\mathbb{R}^{n}$ that turns $\sigma$ into a Riemannian submersion. 
	Then $W$ is isomorphic to $\textup{Diff}(\mathbb{R}^{n},h,\nabla^{\textup{flat}})$. 
	One may regard this isomorphism as a \textit{holomorphic realization} of $W$. 	

	Both realizations can be treated in a unified way by adopting the language of \textit{torifications of 
	dually flat manifolds}, which was recently introduced in \cite{molitor-toric}. The word ``torification" refers to a geometric 
	contruction that associates to a given dually flat manifold $(M,h,\nabla)$ satisfying certain properties, 
	a K\"{a}hler manifold $N$
	(not necessarily compact) equipped with an effective holomorphic and isometric torus action $\Phi:\mathbb{T}^{n}\times N\to N$, 
	where $n$ is the real (resp. complex) dimension of $M$ (resp. $N$). The manifold $N$ is then called 
	the \textit{torification} of $M$. For instance, every K\"{a}hler toric manifold $N$ is the torification of its momentum polytope 
	$(\Delta^{\circ},k,\nabla^{k})$ (\textit{symplectic point of view}), but also the torification of the dually flat manifold 
	$(\mathbb{R}^{n},h,\nabla^{\textup{flat}})$ associated to the holomorphic extension of the torus action 
	(\textit{holomorphic point of view}). Both points of view are equivalent, because there exists an 
	isomorphism of dually flat manifolds between $(\Delta^{\circ},k,\nabla^{k})$ and 
	$(\mathbb{R}^{n},h,\nabla^{\textup{flat}})$. For more details, see \cite{molitor-toric} and 
	Section \ref{nknkneknknfwknk} below.

	In its most general form, our main result states that for a compact and connected torification $N$
	of a dually flat manifold $(M,h,\nabla)$ with real analytic K\"{a}hler 
	metric $g$, the Weyl group $W(S)$ of $\textup{Aut}(N,g)$ is isomorphic to $\textup{Diff}(M,h,\nabla)$ 
	(Theorem \ref{nefknkenknkwndk}). 
	The main technical tool to prove this result is a lifting procedure introduced in \cite{molitor-toric}.  
	If $(N,g)$ and $(N',g')$ are compact torifications of $(M,h,\nabla)$ and $(M',h',\nabla')$, 
	respectively, with real analytic K\"{a}hler metrics $g$ and $g'$, 
	then every affine isometric immersion $f:M\to M'$ can be lifted to a K\"{a}hler immersion $\widetilde{f}:N\to N'$. Moreover, 
	this lift is equivariant relative to some reparametrization of the torus $\mathbb{T}^{n}$ (see Section \ref{neknkenkfnkn}). When 
	$M=M'$ and $f$ is an affine isometry, this implies that $\widetilde{f}$ is in the normalizer $N(S)$ of 
	$S=\{\Phi_{a}:N\to N\,\vert\,a\in \mathbb{T}^{n}\}$, 
	from which it follows that $N(S)$ is isomorphic to the semidirect product 
	$\mathbb{T}^{n}\rtimes \textup{Diff}(M,h,\nabla)$ (see Theorem \ref{nfkwndkefnknkwn} and 
	Lemma \ref{nefkkwknefknwknknfenkdfndknk}). 

	A particularly interesting case occurs when $M=\mathcal{E}$ is a statistical manifold 
	of exponential type defined over a finite set $\Omega=\{x_{1},...,x_{m}\}$ (see Section \ref{nknkefnknk}). 
	Because $\mathcal{E}$ is a statistical manifold, it is naturally endowed 
	with a Riemannian metric $h_{F}$, the \textit{Fisher metric}, and a connection $\nabla^{(e)},$ called \textit{exponential 
	conection}, and it is well-known that $(\mathcal{E},h_{F},\nabla^{(e)})$ is a dually flat manifold \cite{Amari-Nagaoka}.
	For example, the family $\mathcal{B}(n)$ of Binomial distributions $p(k)=\binom{n}{k}q^{k}(1-q)^{n-k}$ 
	defined over $\Omega=\{0,1,...,n\}$ is a 1-dimensional exponential family parametrized by $q\in (0,1)$. 

	Under mild assumptions, we show that $\textup{Diff}(\mathcal{E},h_{F},\nabla^{(e)})$ coincides with 
	the group of permutations of $\mathcal{E}$ (Proposition \ref{nfekwnknfeknk}). Here, a permutation of $\mathcal{E}$ means 
	a diffeomorphim $\varphi:\mathcal{E}\to \mathcal{E}$ 
	of the form $\varphi(p)(x_{i})=p(x_{\sigma(i)})$, where $\sigma$ is a permutation of $\{1,....,m\}$. 
	A direct consequence is that if $N$ is a compact torification of $\mathcal{E}$ with real analytic K\"{a}hler metric $g$, 
	then the Weyl group $W(S)$ of $\textup{Aut}(N,g)$ is isomorphic to the group of permutations of $\mathcal{E}$. 
	For example, let $\Omega=\{x_{1},...,x_{m}\}$ be a finite set, and let $\mathcal{P}_{m}^{\times}$ be the set of maps 
	$p:\Omega\to \mathbb{R}$ satisfying $p(x)>0$ for all $x\in \Omega$ and 
	$\sum_{x\in \Omega}p(x)=1$. Then $\mathcal{P}_{m}^{\times}$ 
	is an exponential family of dimension $m-1$ whose group of permutations is isomorphic to the group $\mathbb{S}_{m}$ 
	of permutations of $m$ objects. In \cite{molitor-toric}, it is proven that the complex projective space 
	$\mathbb{P}_{m-1}(c)$ of complex dimension $m-1$ 
	and holomorphic sectional curvature $c=1$ is a torification 
	of $\mathcal{P}_{m}^{\times}$. The group of holomorphic isometries of $\mathbb{P}_{m-1}(c)$ is the projective 
	unitary group $\textup{PU}(m)$, which is connected. Thus we obtain a very simple proof that 
	the Weyl group of $\textup{PU}(m)$ is isomorphic to $\mathbb{S}_{m}$.

	We conclude this introduction with a few remarks on the physical significance of our results. 
	The present paper is motivated by the geometrization program of Quantum 
	Mechanics that we pursued in previous works \cite{Molitor2012,Molitor-exponential,Molitor2014,Molitor-2015,molitor-toric}, 
	and more recently, in \cite{molitor-spectral}. Very roughly, we put forth the idea that (finite dimensional) Quantum Mechanics 
	can be reformulated in a geometric and informational fashion, in which the usual Hilbert space 
	$\mathcal{H}=\mathbb{C}^{n}$ is replaced by the torification $N$ of an appropriate exponential family $\mathcal{E}$. The choice 
	of $\mathcal{E}$ depends on the quantum experiment at hand. For instance, the case 
	$\mathcal{E}=\mathcal{B}(n)$ (Binomial distributions) 
	and $N=\mathbb{P}_{1}(\tfrac{1}{n})$ yields the mathematical description of the spin of a non-relativistic quantum particle 
	\cite{Molitor-exponential,molitor-spectral}. For the generic case, one replaces $\mathcal{H}=\mathbb{C}^{n+1}$ by the 
	complex projective space $N=\mathbb{P}_{n}(1)$, which is the torification of $\mathcal{E}=\mathcal{P}_{n+1}^{\times}$; 
	this leads to Geometric Quantum Mechanics \cite{Ashtekar,Spera-geometric}. In this circle of ideas, the Weyl group $W$ of 
	$\textup{Aut}(N,g)$ appears naturally due to the need for a geometric analogue of the spectral decomposition theorem 
	for Hermitian matrices.  Such analogue is presented in \cite{molitor-spectral}. Very roughly, 
	it says that a smooth function $f:N\to \mathbb{R}$ is K\"{a}hler\footnote{A K\"{a}hler function on a K\"{a}hler manifold $N$
	is a smooth real-valued function $f$ whose Hamiltonian vector field $X_{f}$ is a Killing vector field. In the context of Geometric 
	Quantum Mechanics \cite{Ashtekar,Spera-geometric}, K\"{a}hler functions on $N=\mathbb{P}_{n}(c)$ play the role of observables.} 
	if and only if $f$ can be written as an expectation of the form 
	$f(p)=\mathbb{E}_{K(\phi(p))}(X)$ for all $p\in N$, 
	where $K:N\to \overline{\mathcal{E}}$ is a map from $N$ to an appropriate superset of 
	$\mathcal{E}$, $\phi\in \textup{Aut}(N,g)$ and $X$ is an appropriate random variable. 
	One of the main results of \cite{molitor-spectral} is that the image of $X$ is independent of the decompostion 
	$f(p)=\mathbb{E}_{K(\phi(p))}(X)$, allowing us to define the spectrum of a K\"{a}hler function $f$ as the set 
	$\textup{spec}(f):=\textup{Im}(X)$. This is very similar to the fact that the Weyl group of $U(n)$ acts by permutations 
	on the eigenvalues of (the diagonalized form of) any skew-Hermitian matrix. For instance, when $\mathcal{E}=\mathcal{B}(n)$ 
	and $N=\mathbb{P}_{1}(\tfrac{1}{n})\cong S^{2}$ ($2$-sphere), then K\"{a}hler functions on $S^{2}$ are of the form 
	$f(x)=\langle u,x\rangle+c $, where $u\in \mathbb{R}^{3}$, $c\in \mathbb{R}$ and $\langle\,,\,\rangle$ is the Euclidean product 
	on $\mathbb{R}^{3}$. If $c=0$ and the Euclidean norm of $u$ is $j=n/2$, then 
	the spectrum (in our sense) of the function $f:S^{2}\to \mathbb{R}$, $x\mapsto \langle u,x\rangle$ is 
	$\textup{spec}(f)=\{-j,-j+1,...,j-1,j\}$. We recognize the spectrum of the usual spin operator of Quantum Mechanics 
	in the direction $u$. In \cite{Molitor-exponential}, 
	we describe in detail the connexion between this example and the unitary representations of $\mathfrak{su}(2)$ (used by physicists 
	to describe the spin of a particle), and conclude that the spin can be entirely understood mathematically from the study of 
	the Binomial distributions. This example shows that the techniques developped in this article 
	are useful to shed new light on some quantum mechanical problems.

	The paper is organized as follows. In Section \ref{nknkneknknfwknk}, we give a fairly detailed discussion on 
	the concept of torification, with which not all readers may be familiar. This includes: Dombrowski's construction, 
	parallel and fundamental lattices, lifting procedure and examples from Information Geometry. In Section \ref{ndknkkfeneknkndk}, 
	we study the group of diffeomorphisms preserving a parallel lattice, and prove a key technical result 
	(Proposition \ref{nfekwwndkefnk}). In Section \ref{nekneknfknknf}, we show that the normalizer $N(S)$ (see our discussion 
	above) is a semidirect product (Theorem \ref{nfkwndkefnknkwn}). 
	We deduce from this, in Section \ref{weeel}, our main result (Theorem \ref{nefknkenknkwndk}). 
	In Section \ref{nknkeknkenfkn}, we specialize to the case in which the dually flat manifold 
	is an exponential family. 

	\textbf{Notation.} Let $M$ and $M'$ be manifolds and $f:M\to M'$ a smooth map. The derivative of $f$ 
	is denoted by $f_{*}:TM\to TM'$. If $p\in M$, we write $f_{*_{p}}:T_{p}M\to T_{f(p)}M'$. Let $G$ be a Lie group 
	and $\Phi:G\times M\to M$ a Lie group action. Given $g\in G$, the map $\Phi_{g}$ is the diffeomorphism of $M$ 
	defined by $\Phi_{g}(p)=\Phi(g,p)$, where $p\in M$.

\section{Symplectic preliminaries: toric manifolds}\label{neknknekwnefknk}

	In this section we recall the rudiments of symplectic geometry used throughout the paper, with emphasis 
	on Hamiltonian torus actions. General references on symplectic geometry are 
	\cite{Ana-lectures,Ortega}. For torus actions, we recommend 
	\cite{Audin,Ana}, the historical papers \cite{Atiyah,Delzant,Guillemin82} and, in
	the K\"{a}hler case, \cite{Abreu,Victor}. 

	Let $G$ be a Lie group with Lie algebra $\textup{Lie}(G)=\mathfrak{g}$. Given $g\in G$, 
	we denote by $\textup{Ad}_{g}:\mathfrak{g}\to \mathfrak{g}$ and $\textup{Ad}_{g}^{*}:\mathfrak{g}^{*}\to 
	\mathfrak{g}^{*}$ the adjoint and coadjoint representations of $G$, respectively; they are related as follows: 
	$\langle \textup{Ad}_{g}^{*}\alpha, \xi\rangle=\langle \alpha,\textup{Ad}_{g^{-1}}\xi\rangle$, 
	where $\xi\in \mathfrak{g}$, $\alpha\in \mathfrak{g}^{*}$ (the dual of $\mathfrak{g}$) and $\langle\,,\,\rangle$ 
	is the natural pairing between $\mathfrak{g}$ and $\mathfrak{g}^{*}$. 

	Let $\Phi:G\times M\to M$ be a Lie group action of $G$ on a manifold $M$. The \textit{fundamental vector field} 
	associated to $\xi\in \mathfrak{g}$ is the vector field on $M$, denoted by $\xi_{M}$, defined by 
	$(\xi_{M})(p):=\tfrac{d}{dt}\big\vert_{0}\Phi(c(t),p),$ 
	where $p\in M$ and $c(t)$ is a smooth curve in $G$ satisfying $c(0)=e$ (neutral element) and $\dot{c}(0)=\xi$. 
	Given a map $\moment:M\to \mathfrak{g}^{*}$ and a vector $\xi\in \mathfrak{g}$, we will denote by $\moment^{\xi}:
	M\to \mathbb{R}$ the function given by $\moment^{\xi}(p):=\langle \moment(p),\xi\rangle.$
	We shall say that $\moment$ is \textit{$G$-equivariant} if for every $g\in G$, 
	$\moment \circ \Phi_{g}=\textup{Ad}_{g}^{*}\circ \moment.$ 
	Given a symplectic form $\omega$ on $M$, we say that $\Phi$ is \textit{symplectic} 
	if $(\Phi_{g})^{*}\omega=\omega$ for all $g\in G$. 

	Let $(M,\omega)$ be a symplectic manifold. A symplectic action $\Phi:G\times M\to M$ is said to be 
	\textit{Hamiltonian} if there exists a $G$-equivariant map $\moment:M\to \mathfrak{g}^{*}$, called 
	\textit{momentum map}, such that $\omega(\xi_{M},\,.\,)=d\moment^{\xi}(.)$ 
	(equality of 1-forms) for all $\xi\in \mathfrak{g}$. If the first de Rham cohomology group 
	$H^{1}(M,\mathbb{R})$ is trivial and 
	$G$ is compact, then any symplectic action of $G$ on $(M,\omega)$ is Hamiltonian 
	(see, e.g., \cite{Ortega}, Propositions 4.5.17 and 4.5.19).
	If $K$ is a closed subgroup of $G$, then the induced action 
	$K\times M\to M$ is also Hamiltonian. The momentum map $\moment$ has many nice properties (see, e.g., \cite{Ortega}). 
	For instance, $\moment$ is a submersion at $p\in M$ if and only if 
	the Hamiltonian action is locally free at $p$, that is, if the stabilizer 
	$G_{p}=\{g\in G\,\,\vert\,\,\Phi(g,p)=p\}$ is discrete. Thus, if $G$ acts freely on a $G$-invariant open 
	subset $U\subseteq M$, then $\moment$ is a submersion on $U$, which implies in particular that 
	$\moment(U)$ is open in $\mathfrak{g}^{*}$.

	When $G=\mathbb{T}^{n}=\mathbb{R}^{n}/\mathbb{Z}^{n}$ is a torus, it is convenient to identify 
	the Lie algebra of the torus $\mathbb{T}^{n}$ with $\mathbb{R}^{n}$ via the derivative at 
	$0\in \mathbb{R}^{n}$ of the quotient map $\mathbb{R}^{n}\to \mathbb{R}^{n}/\mathbb{Z}^{n}$, and 
	to identify $\mathbb{R}^{n}$ and its dual $(\mathbb{R}^{n})^{*}$ via the Euclidean metric.
	Upon these identifications, a momentum map for a Hamiltonian torus action 
	$\mathbb{T}^{n}\times M\to M$ can be regarded as a map $\moment:M\to \mathbb{R}^{n}$. 
	Moreover, since the coadjoint action of a commutative group is trivial, 
	the equivariance condition reduces to $\moment \circ \Phi_{g}=\moment$ for all $g\in \mathbb{T}^{n}$. 

	Suppose now that $\Phi:\mathbb{T}^{n}\times M\to M$ is a Hamiltonian action of the 
	torus $\mathbb{T}^{n}$ on a compact and connected 
	symplectic manifold $(M,\omega)$, with momentum map $\moment:M\to \mathbb{R}^{n}$. Let $M^{\circ}$ be the set 
	of points $p\in M$ where the torus action is free. The following properties are well-known. 
	\begin{description}
		\item[(P1)] $\Delta:=\moment(M)\subset \mathbb{R}^{n}$ is a convex polytope, called \textit{momentum polytope}. 
		\item[(P2)] If $\Phi$ is effective, then $\textup{dim}(M)\geq 2n$.
		\item[(P3)] If $\Phi$ is effective, then $M^{\circ}$ is a dense and connected open subset of $M$. 
	\end{description}
	The first item follows from the Atiyah-Guillemin-Sternberg convexity theorem \cite{Atiyah,Guillemin82}. 
	For the second and third item, see \cite[Theorem 27.3]{Ana} and \cite[Corollary B.48]{Guillemin},
	respectively. It is worth noting that the topological interior $\Delta^{\circ}$ of the momentum polytope
	is nonempty when $\Phi$ is effective. This follows from $\textup{(P3)}$ and the fact that 
	$\moment$ is a submersion on $M^{\circ}$. 

	If, in addition to the properties above, $\Phi$ is effective and $\textup{dim}(M)=2n$, then the
	quadruplet $(M,\omega,\Phi,\moment)$ is called a \textit{symplectic toric manifold}. In this case, 
	Delzant proved that the momentum polytope $\Delta$ completely determines $M:$
	\begin{description}
		\item[(P4)] If $(M',\omega',\Phi',\moment')$ is a symplectic 
			toric manifold such that $\moment'(M')=\moment(M)=\Delta$, 
			then there is an equivariant symplectomorphism $\varphi:M\to M'$ 
			satisfying $\moment=\moment'\circ \varphi$. 
	\end{description}
	Delzant also proved, among many other things, the following results (see \cite{Delzant}).
	\begin{description}
		\item[(P5)] $M$ is simply connected. 
		\item[(P6)] For every $p\in M$, the stabilizer
			$\textup{Stab}(p):=\{g\in \mathbb{T}^{n}\,\,\vert\,\,\Phi(g,p)=p\}$ is connected.
		\item[(P7)] For every $p\in M$, the rank of $\moment_{*_{p}}$ 
			is the dimension of the face of $\Delta$ whose relative interior\footnote{For an introduction to 
			convexity theory, we refer the reader the \cite{Rockafellar}.}
			contains $\moment(p)$.  
	\end{description}
	The momentum polytope itself is a face of $\Delta$ of dimension $n$ 
	whose relative interior is $\Delta^{\circ}$. Therefore 
	if $x=\moment(p)$ is in $\Delta^{\circ}$, then the rank of $\moment_{*_{p}}$ is $n$ by (P7). Thus $\moment$ 
	is a submersion at $p$, which means that $\textup{Stab}(p)$ is discrete. The later group is connected by 
	(P6) and hence it must be trivial. Thus $p\in M^{\circ}$. It follows that $\Delta^{\circ}\subset \moment(M^{\circ})$. 
	Combining this with our discussion above, we deduce that 

	\begin{description}
	\item[(P8)] $\moment(M^{\circ})=\Delta^{\circ}$. 
	\end{description}

	We now focus our attention on the K\"{a}hler case. 
	A symplectic toric manifold $(M,\omega,\Phi,\moment)$ is called a \textit{K\"{a}hler toric manifold} 
	if $M$ is a K\"{a}hler manifold whose K\"{a}hler form coincides with $\omega$, and if the torus acts by holomorphic and 
	isometric transformations on $M.$ In this case, there is a unique Riemannian metric $k$ on $\Delta^{\circ}$ that turns 
	$\moment:M^{\circ}\to \Delta^{\circ}$ into a Riemannian submersion. It can be proved that $k$ 
	is the Hessian of some potential $\varphi:\Delta^{\circ}\to \mathbb{R}$. Abreu 
	classified all potentials on $\Delta^{\circ}$ that are induced from compatible K\"{a}hler metrics on $M$ \cite{Abreu} 
	(see also \cite{Victor}).

	Let $J:TM\to TM$ be the complex structure of $M$ ($J\circ J=-Id$). 
	Given a vector field $X$ on $M$, we will denote by $\varphi^{X}_{t}$ the corresponding flow of $M$. 
	Thus $\tfrac{d}{dt}\varphi^{X}_{t}(p)= X(\varphi^{X}_{t}(p))$. Recall that 
	$x_{M}$ denotes the fundamental vector field of $x\in \mathbb{R}^{n}=\textup{Lie}(\mathbb{T}^{n})$ on $M$.
	If $x,y\in \mathbb{R}^{n}=\textup{Lie}(\mathbb{T}^{n})$, then $-Jx_{M}+y_{M}$ is a complete vector field on $M$ 
	(since $M$ is compact) and hence 
	$\varphi_{1}^{-Jx_{M}+y_{M}}(p)$ is well-defined for all $p\in M$. We define an action $\Phi^{\mathbb{C}}$ 
	of the algebraic torus $(\mathbb{C}^{*})^{n}$ on $M$ by 
	\begin{eqnarray}\label{nkenkenkenkfnekn}
		\Phi^{\mathbb{C}}\big((e^{2\pi z_{1}},...,e^{2\pi z_{n}}),p\big)=
			\varphi_{1}^{-Jx_{M}+y_{M}}(p)=\varphi^{-Jx_{M}}_{1}(\Phi([y],p)),
	\end{eqnarray}
	where $(z_{1},...,z_{n})\in \mathbb{C}^{n}$, $z_{k}=x_{k}+iy_{k}$, $x_{k},y_{k}\in \mathbb{R}$, and $k\in \{1,...,n\}$. 
	To see that \eqref{nkenkenkenkfnekn} is a group action, one may use the following facts: (1) 
	if $X$ and $Y$ are complete commuting vector fields, then 
	$\varphi_{t}^{X}\circ \varphi_{t}^{Y}=\varphi^{X+Y}_{t}$ for all $t\in \mathbb{R}$ and $(2)$ 
	the flow of a vector field $X$ on $M$ consists of holomorphic transformations if and only if 
	$[X,JY]=J[X,Y]$ for all vector fields $Y$ on $M$ (see \cite{moroianu_2007}, Lemma 2.7). 
	Obviously, $\Phi^{\mathbb{C}}$ is an extension of $\Phi$, provided $\mathbb{T}^{n}$ is identified with 
	$\{(e^{i2\pi t_{1}},...,e^{i2\pi t_{n}})\in \mathbb{C}^{n}\,\,\vert\,\,t_{1},...,t_{n}\in \mathbb{R}\}\subset \mathbb{C}^{n}$.
	We shall call it the \textit{holomorphic extension of} $\Phi$. 
	It can be proved that (see, e.g., \cite{molitor-toric}):
	\begin{description}
		\item[(P9)] $\Phi^{\mathbb{C}}$ is holomorphic as a map from $(\mathbb{C}^{*})^{n}\times M$ to $M$. 
		\item[(P10)] Given $p\in M^{\circ}$, the map $(\mathbb{C}^{*})^{n}\to M^{\circ}$, $z\mapsto \Phi^{\mathbb{C}}(z,p)$ 
			is a biholomorphism.
	\end{description}
	Fix $p_{0}\in M^{\circ}$ and identify $(\mathbb{C}^{*})^{n}$ with $M^{\circ}$ via the biholomorphism in $\textup{(P10)}$. 
	Consider the map $\sigma:(\mathbb{C}^{*})^{n}=M^{\circ}\to \mathbb{R}^{n}$ defined by 
	$\sigma(z_{1},...,z_{n})=\big(\tfrac{\ln(|z_{1}|)}{2\pi},...,\tfrac{\ln(|z_{n}|)}{2\pi}\big)$. There is a unique Riemannian 
	metric $h$ on $\mathbb{R}^{n}$ that turns $\sigma$ into a Riemannian submersion. 
	Let $(x_{1},...,x_{n})$ denote the usual linear coordinates 
	on $\mathbb{R}^{n}$. It can be proved that there exists a smooth function 
	$\psi:\mathbb{R}^{n}\to \mathbb{R}$ whose Hessian $(\tfrac{\partial^{2}\psi}{\partial x_{i}\partial x_{j}})$ is 
	$(h_{ij})=\big(h(\tfrac{\partial}{\partial x_{i}},\tfrac{\partial}{\partial x_{j}})\big)$ and 
	such that $-\textup{grad}(\psi)=-(\tfrac{\partial \psi}{\partial x_{1}},...,
	\tfrac{\partial\psi}{\partial x_{n}})$ is an isometry 
	from $(\mathbb{R}^{n},h)$ onto $(\Delta^{\circ},k)$, making the following diagram commutative:
	
	\begin{center}
	\begin{tikzcd}[column sep=small]
	& M^{\circ} \arrow[dl,"\displaystyle\sigma" above left] \arrow[dr, "\displaystyle\moment"] & \\
	(\mathbb{R}^{n},h) \arrow["-\textup{grad}(\psi)" below]{rr} & & (\Delta^{\circ},k)
	\end{tikzcd}
	\end{center}

	When the K\"{a}hler metric on $M$ is real analytic, it can be shown that $M$, regarded as a 
	K\"{a}hler toric manifold, is completely determined by either $(\mathbb{R}^{n},h)$ or $(\Delta^{\circ},k)$, up 
	to an equivariant K\"{a}hler isomorphism and reparametrization of the torus 
	(see \cite{molitor-toric} and Theorem \ref{newdnkekfwndknk} below). This result 
	relies on the crucial observation that $(\mathbb{R}^{n},h)$ and $(\Delta^{\circ},k)$, 
	besides being Riemannian manifolds, are also 
	affine manifolds (since they are open subsets of $\mathbb{R}^{n}$), with good analytic properties, namely they are dually 
	flat (see Section \ref{nknkneknknfwknk}). 
	In the next section, we delve deeper into the correspondence between toric K\"{a}hler manifolds 
	and dually flat manifolds, using the unifying language of ``torification", that we introduced in \cite{molitor-toric}. 


\section{Torification of dually flat manifolds}\label{nknkneknknfwknk}
	In this section, we discuss the concept of torification, which is used throughout this paper. This concept 
	is a combination of two ingredients: (1) \textit{Dombrowski's construction}, 
	which implies that the tangent bundle of a dually flat manifold is naturally a K\"{a}hler manifold \cite{Dombrowski}, 
	and (2) \textit{parallel lattices}, which are used to implement torus actions. Further properties are discussed as well, 
	such as the lifting property mentioned in the introduction. Examples from Information Geometry are presented. 
	The material is mostly taken from \cite{molitor-toric}.

\subsection{Dombrowski's construction.}\label{nkwwnkwnknknfkn}
	Let $M$	be a connected manifold of dimension $n$, endowed with a Riemannian metric $h$ 
	and affine connection $\nabla$ ($\nabla$ is not necessarily 
	the Levi-Civita connection). The \textit{dual connection} of $\nabla$, denoted by $\nabla^{*}$, is the only connection satisfying 
	$X(h(Y,Z))=h(\nabla_{X}Y,Z)+h(Y,\nabla^{*}_{X}Z)$ for all vector fields $X,Y,Z$ on $M$. 
	When both $\nabla$ and $\nabla^{*}$ are flat (i.e., the curvature tensor and torsion are zero), 
	we say that the triple $(M,h,\nabla)$ is a \textit{dually flat manifold}. 

	Let $\pi:TM\to M$ denote the canonical projection. Given a local coordinate system $(x_{1},...,x_{n})$ 
	on $U\subseteq M$, we can define a coordinate system $(q,r)=(q_{1},...,q_{n},r_{1},...,r_{n})$ 
	on $\pi^{-1}(U)\subseteq TM$ by $(q,r)(\sum_{j=1}^{n}a_{j}\tfrac{\partial}{\partial x_{j}}\big\vert_{p})=
	(x_{1}(p),...,x_{n}(p),a_{1},...,a_{n})$, where $p\in M$ and $a_{1},...,a_{n}\in \mathbb{R}$. 
	Write $(z_{1},...,z_{n})=(q_{1}+ir_{1},...,q_{n}+ir_{n})$, where $i=\sqrt{-1}$. When $\nabla$ is flat, Dombrowski 
	\cite{Dombrowski} showed that the family of complex coordinate systems $(z_{1},...,z_{n})$ 
	on $TM$ (obtained from affine coordinates on $M$) form a holomorphic atlas on $TM$. Thus, when $\nabla$ is flat, 
	$TM$ is naturally a complex manifold. If in addition $\nabla^{*}$ is flat, then $TM$ has a natural 
	K\"{a}hler metric $g$ whose local expression in the coordinates $(q,r)$ is given by $g(q,r)=
	\big[\begin{smallmatrix}
		h(x)  &  0\\
		0     &   h(x)
	\end{smallmatrix}
	\big]$, where $h(x)$ is the matrix representation of $h$ in the affine coordinates $x=(x_{1},...,x_{n})$. 
	It follows that the tangent bundle of a dually flat manifold is naturally a K\"{a}hler manifold. In this paper, 
	we will refer to this K\"{a}hler structure as the \textit{K\"{a}hler structure associated to Dombrowski's 
	construction}. 

	Dombrowski's construction can be described in a coordinate-free fashion by means of \textit{connectors} 
	(the concept was introduced by Dombrowski \cite{Dombrowski}). Connectors can be defined as follows.
	Let $u=\sum_{k=1}^{n}u_{k}\tfrac{\partial}{\partial x_{k}}\big\vert_{p}\in \pi^{-1}(U)$ be arbitrary. 
	Define a linear map $K_{u}:T_{u}(TM)\to T_{p}M$ by 
	$K_{u}\big(\tfrac{\partial}{\partial q_{a}}\big\vert_{u}\big):=
	\sum_{k,j=1}^{n}\Gamma_{aj}^{k}(p)u_{j}\tfrac{\partial}{\partial x_{k}}\big\vert_{p}$
	and $K_{u}\big(\tfrac{\partial}{\partial r_{a}}\big\vert_{u}\big):=
	\tfrac{\partial}{\partial x_{a}}\big\vert_{p}$ for all $a\in \{1,...,n\}$, 
	where the $\Gamma_{ij}^{k}$'s are the Christoffel symbols of $\nabla$. If $X$ and $Y$ are 
	vector fields on $M$ such that $Y(p)=u$, then a direct computation shows that $K_{u}(Y_{*_{p}}X_{p})=(\nabla_{X}Y)(p)$.
	Since vectors of the form $Y_{*_{p}}X_{p}$, with $Y_{p}=u$, generate $T_{u}(TM)$, it follows that 
	the definition of $K_{u}$ is independant of the choice of the chart $(U,\varphi)$. The map 
	$K:TTM\to TM$, defined for $A\in T_{u}(TM)$ by $K(A):=K_{u}(A)$, is called \textit{connector}, or \textit{connection map}, 
	associated to $\nabla$. 
	
	Given $u\in T_{p}M$, the map $T_{u}(TM)\to T_{p}M\oplus T_{p}M$, defined 
	by $A\mapsto(\pi_{*_{u}}A,KA)$, is easily seen to be a linear bijection. Therefore, at any point $u\in T_{p}M$ 
	we can identify the vector spaces  $T_{u}(TM)$ and $T_{p}M\oplus T_{p}M$. In terms of this identification, the K\"{a}hler metric 
	$g$ and complex structure $J$ on $TM$ read 
	$g_{u}((v,w), (v', w'))
	=h_{p}(v,v')+ h_{p}(w,w')$ and 
	$J_{u}((v,w))= (-w,v)$, where $u,v,w,v',w'\;\in\; T_{p}M$.

\subsection{Parallel lattices.}\label{nekwnkefkwnkk}

	Let $(M,h,\nabla)$ be a dually flat manifold of dimension $n$. 
	A subset $L\subset TM$ is said to be a \textit{parallel lattice} with respect to $\nabla$ if there are $n$ vector 
	fields $X_{1},...,X_{n}$ on $M$ that are parallel with respect to $\nabla$ and such that: $(i)$ $\{X_{1}(p),...,X_{n}(p)\}$ is a basis 
	for $T_{p}M$ for every $p\in M$, and $(ii)$ $L=\{k_{1}X_{1}(p)+...+k_{n}X_{n}(p)\,\,\vert\,\,k_{1},...,k_{n}\in \mathbb{Z},\,\,p\in M\}$. 
	In this case, we say that the frame $X=(X_{1},...,X_{n})$ is a 
	\textit{generator} for $L$. We will denote the set of generators for $L$ by 
	$\textup{gen}(L)$. 

	Given a parallel lattice $L\subset TM$ with respect to $\nabla$, and $X\in \textup{gen}(L)$ , 
	we will denote by $\Gamma(L)$ the set of 
	transformations of $TM$ of the form $u\mapsto u+k_{1}X_{1}+...+k_{n}X_{n}$, where 
	$u\in TM$ and $k_{1},...,k_{n}\in \mathbb{Z}$. The group
	$\Gamma(L)$ is independent of the choice of $X\in \textup{gen}(L)$ and is isomorphic to $\mathbb{Z}^{n}$. 
	Moreover, the natural action of $\Gamma(L)$ 
	on $TM$ is free and proper. Thus the quotient $M_{L}=TM/\Gamma(L)$ is a 
	smooth manifold and the quotient map $q_{L}:TM\to M_{L}$ is a covering map 
	whose Deck transformation group is $\Gamma(L)$. Since 
	$\pi\circ \gamma=\pi$ for all $\gamma\in \Gamma(L)$, 
	there exists a surjective submersion $\pi_{L}:M_{L}\to M$ such that $\pi=\pi_{L}\circ q_{L}$.
%

	Let $\mathbb{T}^{n}=\mathbb{R}^{n}/\mathbb{Z}^{n}$ be the $n$-dimensional torus. 
	Given $t=(t_{1},...,t_{n})\in \mathbb{R}^{n}$, we will denote 
	by $[t]=[t_{1},...,t_{n}]$ the corresponding equivalence class in $\mathbb{R}^{n}/\mathbb{Z}^{n}$. 
	Given $X\in \textup{gen}(L)$, we will denote by 
	$\Phi_{X}:\mathbb{T}^{n}\times M_{L}\to M_{L}$ the effective torus action defined by 
	\begin{eqnarray}\label{nkdnknewknek}
		\Phi_{X}([t],q_{L}(u))=q_{L}(u+t_{1}X_{1}+...+t_{n}X_{n}). 
	\end{eqnarray}

	The manifold $M_{L}=TM/\Gamma(L)$ is naturally a K\"{a}hler manifold (this follows from the fact that 
	every $\gamma\in \Gamma(L)$ is a holomorphic and isometric map with respect the K\"{a}hler structure 
	associated to Dombrowski's construction). Moreover, for each $a\in \mathbb{T}^{n}$, the map 
	$(\Phi_{X})_{a}:M_{L}\to M_{L}$, $p\mapsto \Phi_{X}(a,p)$ is a holomorphic isometry. 

\subsection{Torification.}\label{nfkenkfejdefdknkn} 
	Let $(M,h,\nabla)$ be a connected dually flat manifold of dimension $n$ and $N$ a connected K\"{a}hler 
	manifold of complex dimension $n$, equipped with an effective holomorphic and isometric torus 
	action $\Phi:\mathbb{T}^{n}\times N\to N$. Let $N^{\circ}$ denote the set of points $p\in N$ 
	where $\Phi$ is free. 

\begin{definition}[\textbf{Torification}]
	We shall say that $N$ is a \textit{torification} of $M$ 
	if there exist a parallel lattice $L\subset TM$ with respect to 
	$\nabla$, $X\in \textup{gen}(L)$ and a holomorphic and isometric diffeomorphism $F:M_{L}\to N^{\circ}$
	satisfying $F\circ (\Phi_{X})_{a}=\Phi_{a}\circ F$ for all $a\in \mathbb{T}^{n}$. 
\end{definition}

	By abuse of language, we will often say that the torus action $\Phi:\mathbb{T}^{n}\times N\to N$ is a torification of $M$. 
	We shall say that a K\"{a}hler manifold $N$ is \textit{regular} if it is connected, 
	simply connected, complete and if the K\"{a}hler metric is real analytic. 
	A torification $\Phi:\mathbb{T}^{n}\times N\to N$ is said to be \textit{regular} if $N$ is regular. 
	In this paper, we are mostly interested 
	in regular torifications. 
\begin{remark}\label{ndkdnknknknk}
	\textbf{}
	\begin{enumerate}[(a)]
		\item A torification $N$ is not necessarily a K\"{a}hler toric manifold. For example, $N$ may not be compact 
			(see Proposition \ref{nfeknwknwknk}). 
		\item Since a toric K\"{a}hler manifold is simply connected by $\textup{(P5)}$, it is 
			regular if and only if the K\"{a}hler metric is real analytic. 
		\item If a torification $\Phi:\mathbb{T}^{n}\times N\to N$ is compact and regular, then $\Phi$ is Hamiltonian 
			(since $N$ is simply connected) and hence $N$ is a toric K\"{a}hler manifold.
	\end{enumerate}
\end{remark}

	Two torifications $\Phi:\mathbb{T}^{n}\times N\to N$ and $\Phi':\mathbb{T}^{n}\times N'\to N'$ 
	of the same connected dually flat manifold $(M,h,\nabla)$ are said to be \textit{equivalent} if there exists 
	a K\"{a}hler isomorphism $f:N\to N'$ and a Lie group isomorphism $\rho:\mathbb{T}^{n}\to \mathbb{T}^{n}$ such that 
	$f\circ \Phi_{a}=\Phi'_{\rho(a)}\circ f$ for all $a\in \mathbb{T}^{n}$.
\begin{theorem}[\textbf{Equivalence of regular torifications}]\label{newdnkekfwndknk}
	Regular torifications of a connected dually flat manifold $(M,h,\nabla)$ are equivalent. 
\end{theorem}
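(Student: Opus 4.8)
The plan is to reduce everything to the case of the canonical model $M_L = TM/\Gamma(L)$ equipped with its Dombrowski Kähler structure and the torus action $\Phi_X$, by showing that any regular torification is equivalent to one of these, and that all the $\Phi_X$'s (for varying lattices $L$ and generators $X$) are equivalent to each other. So suppose $\Phi:\mathbb{T}^n\times N\to N$ and $\Phi':\mathbb{T}^n\times N'\to N'$ are two regular torifications of $(M,h,\nabla)$. By definition, there are parallel lattices $L,L'\subset TM$, generators $X\in\textup{gen}(L)$, $X'\in\textup{gen}(L')$, and holomorphic isometric diffeomorphisms $F:M_L\to N^\circ$, $F':M_{L'}\to N'^\circ$ intertwining $\Phi_X$ with $\Phi|_{N^\circ}$ and $\Phi_{X'}$ with $\Phi'|_{N'^\circ}$ respectively. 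The heart of the matter is therefore to produce a Kähler isomorphism $M_L\to M_{L'}$ together with a reparametrization $\rho$ of $\mathbb{T}^n$ intertwining $\Phi_X$ and $\Phi_{X'}$, and then to promote it across the (dense, open) embeddings $F,F'$ to a global Kähler isomorphism $N\to N'$.

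For the first step, I would compare $L$ and $L'$ at the level of the universal cover $TM$. Since $X=(X_1,\dots,X_n)$ and $X'=(X'_1,\dots,X'_n)$ are both $\nabla$-parallel global frames, there is a constant matrix $C\in GL_n(\mathbb{R})$ with $X'_j=\sum_i C_{ij}X_i$; parallelism forces $C$ to be constant because $\nabla$ is flat and $M$ connected. The fibrewise-linear map of $TM$ sending $u$ to the vector with the same $X'$-coordinates that $u$ had in the $X$-frame is then an affine bundle automorphism of $TM$; one checks using the coordinate-free description of Dombrowski's structure (the metric $g_u((v,w),(v',w'))=h_p(v,v')+h_p(w,w')$ and $J_u(v,w)=(-w,v)$) that this map is holomorphic and isometric precisely when $C$ lies in the orthogonal group of $h$ expressed in affine coordinates — and here it need not be. The correct statement is instead that $\Phi_X$ and $\Phi_{X'}$ are equivalent torifications via the reparametrization $\rho=[\,C^{-1}\,]:\mathbb{T}^n\to\mathbb{T}^n$ together with the \emph{identity} on $M_L=M_{L'}$ when $L=L'$, and when $L\neq L'$ one must first argue that a torification only ever uses one lattice up to the constraint that $q_L$ is holomorphic isometric for Dombrowski's structure, i.e. that $L$ is in fact intrinsic. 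I would extract this from \cite{molitor-toric}: the lattice $L$ is recovered from $N$ as the lattice of periods of the $\mathbb{T}^n$-action, hence $L=L'$ and only the generator (equivalently $C\in GL_n(\mathbb{Z})$, by $(i)$ in the definition of parallel lattice) is free. This gives the Kähler isomorphism $M_L\to M_{L'}$ as the identity and the intertwining reparametrization as $[\,C^{-1}\,]$.

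The second step is the extension across $F$ and $F'$. We have a Kähler isomorphism $g_0:N^\circ\to N'^\circ$, namely $F'\circ(\text{identity})\circ F^{-1}$, intertwining $\Phi|_{N^\circ}$ with $\Phi'|_{N'^\circ}$ up to the reparametrization $\rho$. I would extend $g_0$ to all of $N$ using regularity: $N$ and $N'$ are connected, simply connected, complete, with real analytic Kähler metrics, and $N^\circ$ is dense (by (P3), or the torification analogue). A real analytic isometry defined on a connected open subset of a complete real analytic Riemannian manifold extends uniquely to a global isometry when the target is simply connected and complete — this is the classical extension theorem for analytic isometries (Myers–Steenrod / Kobayashi–Nomizu type continuation along paths, using completeness to keep going and simple connectivity to make the continuation well-defined). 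Since $g_0$ is moreover holomorphic, its extension is automatically holomorphic by analytic continuation of the Cauchy–Riemann equations, hence a Kähler isomorphism $f:N\to N'$. Equivariance $f\circ\Phi_a=\Phi'_{\rho(a)}\circ f$ holds on the dense set $N^\circ$ and hence everywhere by continuity.

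The main obstacle I expect is the bookkeeping in the first step — cleanly establishing that the lattice $L$ is an invariant of the torification (so that two torifications cannot secretly use genuinely different lattices) and tracking the exact reparametrization $\rho$ induced by a change of generator $X\mapsto X'$. This is where the precise results of \cite{molitor-toric} on parallel and fundamental lattices are indispensable; the analytic-continuation step, by contrast, is a routine invocation of standard theorems once real analyticity, completeness and simple connectivity are in hand.
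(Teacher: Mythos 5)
The paper does not actually prove Theorem \ref{newdnkekfwndknk}; it is imported from \cite{molitor-toric} (the section opens with ``The material is mostly taken from \cite{molitor-toric}''), so your proposal can only be judged against what such a proof must contain. Your overall architecture --- reduce to the canonical models $M_{L}$, then extend a Kähler isomorphism of the dense open sets $N^{\circ}\to (N')^{\circ}$ to all of $N$ by real analytic continuation --- is the right shape, and your second step is sound: it is exactly the mechanism the paper itself uses in Lemma \ref{neknknfknknkn} via \cite{Kobayashi-Nomizu}, Ch.\ VI, Cor.\ 6.4 (with the usual caveat that you must also extend $g_{0}^{-1}$ and use uniqueness of extensions to conclude the extension is a diffeomorphism rather than merely a local isometry).

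The genuine gap is in your first step, and it is not ``bookkeeping.'' The crux of the theorem is precisely the claim that two regular torifications cannot use genuinely different parallel lattices, i.e.\ $L=L'$, and your justification --- that $L$ is ``recovered from $N$ as the lattice of periods of the $\mathbb{T}^{n}$-action'' --- is circular. The period lattice of the action inside $\textup{Lie}(\mathbb{T}^{n})$ is tautologically $\mathbb{Z}^{n}$ for \emph{every} $M_{L}$; transporting it into $TM$ requires a choice of frame or of compatible covering map, which is exactly the parametrization data you are trying to compare. Note also that your step 1 never uses regularity, yet it must: every parallel lattice $L$ yields a torification $M_{L}$ of $(M,h,\nabla)$, and distinct lattices yield inequivalent ones, so the theorem is simply false without the regularity hypothesis --- the real content is that at most one lattice (up to a $\textup{GL}(n,\mathbb{Z})$ change of generator) admits a completion to a connected, simply connected, complete, real analytic $N$. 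This is the fundamental-lattice statement of Section \ref{nfeknkfnekndk}, which in this paper appears \emph{after} Theorem \ref{newdnkekfwndknk} and is logically downstream of it, so citing it here begs the question. A route that stays within the toolkit the paper does record would be to lift the identity map $\textup{id}:M\to M$ in both directions using Theorem \ref{ncnkwknknk} and conclude from the uniqueness of lifts that the two lifts are mutually inverse equivariant Kähler isomorphisms; but Theorem \ref{ncnkwknknk} is itself quoted without proof, so either way the essential analytic input remains unestablished in your write-up. (A minor point: with $X'_{j}=\sum_{i}C_{ij}X_{i}$ and $L=L'$, the intertwining reparametrization is $\rho_{C}$, not $\rho_{C^{-1}}$, under the conventions of \eqref{nkdnknewknek}.)
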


	A connected dually flat manifold $(M,h,\nabla)$ is said to be \textit{toric} if it has a regular torification $N$. 
	In this case, we will often refer to $N$ as ``the regular torification of $M$", and keep in mind that it is 
	only defined up to an equivariant K\"{a}hler isomorphism and reparametrization of the torus. 

	For later reference, we give the following technical definition.
\begin{definition}
	Suppose $\Phi:\mathbb{T}^{n}\times N\to N$ is a torification of $(M,h,\nabla)$. 
	\begin{enumerate}[(1)]
	\item A \textit{toric parametrization} is a triple $(L,X,F)$, where $L\subset TM$ is parallel lattice with respect to $\nabla$, 
		$X\in \textup{gen}(L)$ and $F:M_{L}\to N^{\circ}$ is a holomorphic and isometric diffeomorphism satisfying 
		$F\circ (\Phi_{X})_{a}=\Phi_{a}\circ F$ for all $a\in \mathbb{T}^{n}$.
	\item  Let $\tau:TM\to N^{\circ}$ and $\kappa:N^{\circ}\to M$ be smooth maps. We say that 
		the pair $(\tau,\kappa)$ is a \textit{toric factorization} 
		if there exists a toric parametrization $(L,X,F)$ such that $\tau=F\circ q_{L}$ and $\kappa=\pi_{L}\circ F^{-1}$. 
	In this case, we say that $(\tau, \kappa)$ is \textit{induced by the toric parametrization} $(L,X,F)$. Note that $\pi=\kappa\circ \tau.$
\item We say that $\kappa:N^{\circ}\to M$ is the \textit{compatible projection induced by the toric parametrization} $(L,X,F)$ 
	if there exists a map $\tau:TM\to N^{\circ}$ such that $(\tau,\kappa)$ is the toric factorization induced by $(L,X,F)$. 
	When it is not necessary to mention $(L,X,F)$ explicitly, we just say 
	that $\kappa$ is a \textit{compatible projection}. Analogously, one defines a \textit{compatible covering map} $\tau:TM\to N^{\circ}$. 
\end{enumerate}
\end{definition}
	\noindent By abuse of language, we will often say that the formula $\pi=\kappa\circ \tau$ is a toric factorization. 

	If $\Phi:\mathbb{T}^{n}\times N\to N$ is a regular torification of $M$, and 
	if $\kappa,\kappa':N^{\circ}\to M$ are compatible projections, 
	then there is a holomorphic isometry $\varphi:N\to N$ and a Lie group isomorphism 
	$\rho:\mathbb{T}^{n}\to \mathbb{T}^{n}$ such that $\varphi\circ \Phi_{a}=\Phi_{\rho(a)}\circ \varphi$ 
	for all $a\in \mathbb{T}^{n}$ and $\kappa'=\kappa\circ \varphi$.

\subsection{Lifting procedure.}\label{neknkenkfnkn}
	Let $\Phi:\mathbb{T}^{n}\times N\to N$ and $\Phi':\mathbb{T}^{d}\times N'\to N'$ be torifications of the dually flat manifolds 
	$(M,h,\nabla)$ and $(M',h',\nabla')$, respectively. 

\begin{definition}
	Let $f:M\to M'$ and $m:N\to N'$ be smooth maps. We say that $m$ is a \textit{lift of} $f$ 
	if there are compatible covering maps $\tau:TM\to N^{\circ}$ and $\tau':TM'\to (N')^{\circ}$ such that 
	$m\circ \tau= \tau'\circ f_{*}$. In this case, we say that $m$ \textit{is a lift of} $f$ 
	\textit{with respect to} $\tau$ and $\tau'$.  
\end{definition}

	If $m$ is a lift of $f$ with respect to $\tau$ and $\tau'$, and if $\pi=\kappa\circ \tau$ and $\pi'=\kappa'\circ \tau'$ are toric factorizations, then 
	$\kappa'\circ m =f\circ \kappa.$ Therefore the following diagram commutes:

\begin{eqnarray}\label{knkdfnknkndknk}
	\begin{tikzcd}
		TM \arrow{rrr}{\displaystyle f_{*}} \arrow[dd,swap,"\displaystyle\pi"] \arrow{rd}{\displaystyle \tau} &    &   & 
			TM' \arrow[swap]{ld}{\displaystyle \tau'} \arrow[dd,"\displaystyle\pi'"]  \\
		& N^{\circ} \arrow{r}{\displaystyle m} \arrow{ld}{\displaystyle \kappa}  
			& (N')^{\circ} \arrow[swap]{dr}{\displaystyle \kappa'}  & \\
		M \arrow[swap]{rrr}{\displaystyle f}  &     && M'
	\end{tikzcd}
\end{eqnarray}

	If $m:N\to N'$ is a lift of $f$, then $m$ is a K\"{a}hler immersion if and only if $f:(M,\nabla)\to (M',\nabla')$ is an affine immersion satisfying $f^{*}h'=h$. 
	In this case, there exists a unique Lie group homomorphism 
	$\rho:\mathbb{T}^{n}\to \mathbb{T}^{d}$ with finite kernel such that $m\circ \Phi_{a}=\Phi_{\rho(a)}'\circ m$ for all $a\in \mathbb{T}^{n}$.

\begin{theorem}[\textbf{Existence of lifts}]\label{ncnkwknknk}
	Suppose $\Phi:\mathbb{T}^{n}\times N\to N$ and $\Phi':\mathbb{T}^{d}\times N'\to N'$ are regular torifications of $(M,h,\nabla)$ and $(M',h',\nabla')$, respectively. 
	Let $\tau:TM\to N^{\circ}$ and $\tau':TM'\to (N')^{\circ}$ be compatible covering maps. Then every isometric affine immersion 
	$f:M\to M'$ has a unique lift $m:N\to N'$ with respect to $\tau$ and $\tau'$. 
\end{theorem}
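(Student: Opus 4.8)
The plan is to build the lift first over the free locus $N^{\circ}$, by descending $\tau'\circ f_{*}$ through the covering $\tau$, then to extend it across $N\setminus N^{\circ}$, and to observe that uniqueness is essentially automatic. For uniqueness: if $m_{1},m_{2}$ are lifts of $f$ with respect to $\tau$ and $\tau'$, then $m_{1}\circ\tau=\tau'\circ f_{*}=m_{2}\circ\tau$, so $m_{1}$ and $m_{2}$ agree on $\tau(TM)=N^{\circ}$; since $N^{\circ}$ is dense in $N$ (its complement is the union of the fixed-point sets of the nontrivial subtori of $\mathbb{T}^{n}$, a proper complex-analytic subset) and $m_{1},m_{2}$ are continuous, $m_{1}=m_{2}$.

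For existence, fix toric parametrizations $(L,X,F)$ and $(L',X',F')$ with $\tau=F\circ q_{L}$ and $\tau'=F'\circ q_{L'}$. Since $\tau$ is the covering map with deck group $\Gamma(L)$, the map $\tau'\circ f_{*}$ descends to a smooth $m^{\circ}:N^{\circ}\to(N')^{\circ}$ with $m^{\circ}\circ\tau=\tau'\circ f_{*}$ if and only if $\tau'\circ f_{*}$ is $\Gamma(L)$-invariant. As $\Gamma(L)$ acts by $u\mapsto u+\sum_{i}k_{i}X_{i}(\pi(u))$ and $f_{*}$ is fibrewise linear, this invariance is equivalent to the inclusion $f_{*}(L)\subseteq L'$. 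Proving this inclusion is, I expect, the main obstacle. Since $f$ is affine (so $f_{*}$ commutes with parallel transport) and $M$ is connected, each $f_{*}X_{i}$ is $\nabla'$-parallel along $f$, hence $f_{*}X_{i}=\sum_{j}a_{i}^{j}X'_{j}$ for constants $a_{i}^{j}$; the point is that the matrix $(a_{i}^{j})$ is \emph{integral}. Regularity enters precisely here: a parallel lattice admitting a regular torification is highly constrained, and for such lattices the condition $f^{*}h'=h$ together with the asymptotic rigidity of the associated Hessian potentials forces integrality of $(a_{i}^{j})$; this belongs to the structure theory of (fundamental) parallel lattices developed in \cite{molitor-toric}.

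Granting $f_{*}(L)\subseteq L'$, we obtain $m^{\circ}:N^{\circ}\to(N')^{\circ}$ with $m^{\circ}\circ\tau=\tau'\circ f_{*}$. It is holomorphic, because $\tau,\tau'$ are holomorphic and $f_{*}$ is holomorphic (in affine coordinates, Dombrowski's complex coordinates $z=q+ir$ turn the affine map $f$ into a complex-affine $f_{*}$), and it is a Riemannian local isometry, because $\tau,\tau'$ are local isometries and $f_{*}:TM\to TM'$ is an isometric immersion for the Dombrowski metrics (this uses $f^{*}h'=h$, $f$ affine, and the block form $\mathrm{diag}(h,h)$ of the Dombrowski metric). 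To finish I would extend $m^{\circ}$ across $N\setminus N^{\circ}$, which is a complex-analytic subset of complex codimension $\geq 1$: near $p_{0}\in N\setminus N^{\circ}$ pick a relatively compact neighbourhood $U$; since the removed set has real codimension $\geq 2$, $U\cap N^{\circ}$ has finite intrinsic diameter, and since $m^{\circ}$ is length-non-increasing, $m^{\circ}(U\cap N^{\circ})$ has finite diameter in $N'$, hence relatively compact closure by completeness of $N'$; the Riemann removable-singularity theorem then lets $m^{\circ}$ extend holomorphically to a map $m:N\to N'$. Finally $m\circ\tau=m^{\circ}\circ\tau=\tau'\circ f_{*}$, so $m$ is the desired lift, and it is moreover a Kähler immersion by the property of lifts recorded just before the theorem.
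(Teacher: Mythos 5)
The paper does not actually prove Theorem \ref{ncnkwknknk}; it is recalled from \cite{molitor-toric} without proof, so there is no in-paper argument to compare against. Judged on its own terms, your proposal has the right architecture (descend $\tau'\circ f_{*}$ through the covering $\tau$, extend across $N\setminus N^{\circ}$, get uniqueness from density of $N^{\circ}$), and the uniqueness half is essentially complete. But the existence half has a genuine gap exactly where you say you expect ``the main obstacle'' to be: the inclusion $f_{*}(L)\subseteq L'$, equivalently the integrality of the constant matrix $(a_{i}^{j})$ with $f_{*}X_{i}=\sum_{j}a_{i}^{j}X'_{j}$. You correctly reduce the descent of $\tau'\circ f_{*}$ through $q_{L}$ to this statement, but then you do not prove it; you appeal to ``asymptotic rigidity of the associated Hessian potentials'' and to unspecified ``structure theory of parallel lattices'' in \cite{molitor-toric}. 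That is not an argument, and it is precisely the nontrivial content of the theorem: without integrality there is no well-defined map $N^{\circ}\to (N')^{\circ}$ at all, and the conditions ``$f$ affine'' and ``$f^{*}h'=h$'' alone visibly do not force a real matrix to be integral --- the regularity hypotheses (real analyticity, completeness, simple connectedness) must enter in an essential way, and your proposal never makes them do any work at this step. Note also that you cannot invoke Proposition \ref{nfekwnknfkenk} here, since that proposition presupposes the lift $m$ and its associated homomorphism $\rho$ already exist.

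A route that avoids needing integrality as an input is to reverse the logic: define $m$ locally on a small open set of $N^{\circ}$ as $\tau'\circ f_{*}\circ s$ for a local section $s$ of $\tau$ (a local K\"ahler immersion into $N'$), then use real analyticity, completeness and simple connectedness of $N$ together with completeness of $N'$ to analytically continue this germ to a global map $m:N\to N'$ (monodromy theorem plus the continuation theory for local isometric/K\"ahler immersions of analytic manifolds), and finally obtain $m\circ\tau=\tau'\circ f_{*}$ on all of the connected manifold $TM$ because both sides are real analytic and agree on an open set; the integrality, i.e.\ the deck-equivariance, then falls out as a corollary. Separately, and much less seriously, your extension step across $N\setminus N^{\circ}$ is under-justified: a removable-singularity statement for holomorphic maps into a complex \emph{manifold} requires first producing a continuous extension (your Lipschitz bound can give this, but you should say so) before Riemann's theorem can be applied chartwise, and the claim that $U\cap N^{\circ}$ has finite intrinsic diameter because the removed set has real codimension at least two is true but not free. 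These secondary points are fixable; the integrality gap is not, as written.
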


	For later reference, we give the following technical result, in which $(L,X,F)$ and $(L',X',F')$ 
	are toric parametrizations of $N$ and $N'$, respectively, with induced toric factorizations 
	$\pi=\kappa\circ \tau:TM\to M$ and $\pi'=\kappa'\circ \tau':TM'\to M'$. 

\begin{proposition}[\textbf{The derivative of an isometric affine immersion is equivariant}]\label{nfekwnknfkenk}
	Let $m:N\to N'$ be the lift of an affine isometric immersion $f:M\to M'$ 
	with respect to $\tau$ and $\tau'$, respectively, and let $\rho:\mathbb{T}^{n}\to \mathbb{T}^{d}$ be the unique 
	homomorphism satisfying $m\circ \Phi_{a}=\Phi'_{\rho(a)}\circ m$ for all $a\in \mathbb{T}^{n}$. If 
	$N$ and $N'$ are regular, then the derivative of $f$ satisfies 
	\begin{eqnarray*}
		f_{*}\circ (T_{X})_{t}=(T_{X'})_{\rho_{*_{e}}t}\circ f_{*}
	\end{eqnarray*}
	for all $t\in\mathbb{R}^{n}\cong \textup{Lie}(\mathbb{T}^{n})$, where $T_{X}$ is the group action of 
	$\mathbb{R}^{n}$ on $TM$ defined by 
	$T_{X}(t,u)=u+\sum_{k=1}^{n}t_{k}X_{k}$ ($T_{X'}$ is defined 
	similarly). 
\end{proposition}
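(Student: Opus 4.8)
The plan is to reduce the statement to cancelling the covering map $\tau'$ after chaining together the equivariance relations already at our disposal. First I would record how $T_{X}$ is transported by $\tau$ onto the torus action: since $\tau=F\circ q_{L}$, since \eqref{nkdnknewknek} gives $(\Phi_{X})_{[t]}\circ q_{L}=q_{L}\circ (T_{X})_{t}$, and since $F\circ (\Phi_{X})_{a}=\Phi_{a}\circ F$ by definition of a toric parametrization, one obtains
\[
\Phi_{[t]}\circ \tau=\tau\circ (T_{X})_{t}\qquad (t\in\mathbb{R}^{n}),
\]
and, in exactly the same way, $\Phi'_{[s]}\circ \tau'=\tau'\circ (T_{X'})_{s}$ for all $s\in\mathbb{R}^{d}$.

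Next I would compute $\tau'\circ f_{*}\circ (T_{X})_{t}$ in two ways. Using the defining property $m\circ\tau=\tau'\circ f_{*}$ of a lift with respect to $\tau$ and $\tau'$, then the identity above, then $m\circ\Phi_{a}=\Phi'_{\rho(a)}\circ m$, and finally $m\circ\tau=\tau'\circ f_{*}$ once more, one gets
\[
\tau'\circ f_{*}\circ (T_{X})_{t}=m\circ\Phi_{[t]}\circ\tau=\Phi'_{\rho([t])}\circ m\circ\tau=\Phi'_{\rho([t])}\circ\tau'\circ f_{*}.
\]
Since $\rho$ is a homomorphism of tori, $\rho([t])=[\rho_{*_{e}}t]$ under the identifications $\textup{Lie}(\mathbb{T}^{k})\cong\mathbb{R}^{k}$ fixed in Section \ref{neknknekwnefknk}, so applying the primed identity with $s=\rho_{*_{e}}t$ yields
\[
\tau'\circ f_{*}\circ (T_{X})_{t}=\tau'\circ (T_{X'})_{\rho_{*_{e}}t}\circ f_{*}\qquad (t\in\mathbb{R}^{n}).
\]

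It then remains to cancel $\tau'$ on the left. Here I would consider the two continuous (indeed smooth) maps $A,B:\mathbb{R}^{n}\times TM\to TM'$ defined by $A(t,u)=f_{*}((T_{X})_{t}u)$ and $B(t,u)=(T_{X'})_{\rho_{*_{e}}t}(f_{*}u)$. The last display says $\tau'\circ A=\tau'\circ B$, and $A(0,u)=f_{*}u=B(0,u)$, so $A$ and $B$ agree on the nonempty set $\{0\}\times TM$. As $\tau'$ is a covering map and $\mathbb{R}^{n}\times TM$ is connected, two continuous lifts of the common map $\tau'\circ A$ that agree at a point must coincide (their equalizer is open and closed); hence $A=B$, which is precisely the claimed identity $f_{*}\circ (T_{X})_{t}=(T_{X'})_{\rho_{*_{e}}t}\circ f_{*}$.

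I expect the cancellation of $\tau'$ to be the only genuine subtlety: after cancelling $m$ the relation holds a priori only modulo the deck group $\Gamma(L')$, and removing this ambiguity requires a connectedness argument — here the fact that equality already holds at $t=0$ together with the lift-uniqueness property of the covering $\tau'$. Alternatively, one can expand $A-B$ in the frame $X'$ and note that its coefficients are simultaneously linear in $t$ and $\mathbb{Z}$-valued, hence zero. Regularity of $N$ and $N'$ enters only to guarantee the existence of the lift $m$ and of the homomorphism $\rho$ via Theorem \ref{ncnkwknknk}, both of which are part of the hypotheses here.
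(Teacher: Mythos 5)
Your argument is correct and is essentially the expected one: you transport the $\mathbb{R}^{n}$-actions $T_{X}$, $T_{X'}$ through $\tau$, $\tau'$ onto the torus actions, chain the lift relation $m\circ\tau=\tau'\circ f_{*}$ with the equivariance $m\circ\Phi_{a}=\Phi'_{\rho(a)}\circ m$, and then remove $\tau'$ by uniqueness of lifts through the covering $\tau'=F'\circ q_{L'}$ on the connected space $\mathbb{R}^{n}\times TM$, using agreement at $t=0$. (The paper itself defers the proof to the reference, but this is the natural route; your closing remark correctly identifies the cancellation of $\tau'$ modulo the deck group $\Gamma(L')$ as the only real subtlety.)
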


%
%
%
%
%

\subsection{Fundamental lattices.}\label{nfeknkfnekndk}

	Let $\Phi:\mathbb{T}^{n}\times N\to N$ and $\Phi':\mathbb{T}^{n}\times N'\to N'$ be regular torifications of the dually 
	flat manifold $(M,h,\nabla)$. 
	If $(L,X,F)$ and $(L',X',F')$ are toric parametrizations of $N$ and $N'$, respectively, then $L=L'$. 
	We call $\mathscr{L}:=L=L'$ the \textit{fundamental lattice} of $(M,h,\nabla)$. 

\subsection{Canonical example: K\"{a}hler toric manifolds.}\label{nknknvknknk} Let $(N,\omega,\Phi,\moment)$ be a K\"{a}hler toric manifold, 
	with K\"{a}hler metric $g$ and 
	momentum polytope $\Delta=\moment(N)\subset \mathbb{R}^{n}$. Let $k$ (resp. $h$) be the unique Riemannian metric on 
	$\Delta^{\circ}$ (resp. $\mathbb{R}^{n}$)
	that turns $\moment:(N^{\circ},g)\to (\Delta^{\circ},k)$ (resp. $\sigma:(N^{\circ},g)\to (\mathbb{R}^{n},h)$) 
	into a Riemannian submersion (see Section \ref{neknknekwnefknk}). 
	Let $\nabla^{\textup{flat}}$ be the flat connection on $\mathbb{R}^{n}$ and 
	let $\nabla^{k}$ be the dual connection 
	of $\nabla^{\textup{flat}}$ on $\Delta^{\circ}$ with respect to $k$. 
	Then $(\Delta^{\circ},k,\nabla^{k})$ and $(\mathbb{R}^{n},h,\nabla^{\textup{flat}})$ are isomorphic
	dually flat manifolds and $\Phi:\mathbb{T}^{n}\times N\to N$ 
	is simultaneously a torification of $(\Delta^{\circ},k,\nabla^{k})$ and $(\mathbb{R}^{n},h,\nabla^{\textup{flat}})$. 
	From now on we call $(\Delta^{\circ},k,\nabla^{k})$ the \textit{momentum polytope} and 
	$(\mathbb{R}^{n},h,\nabla^{\textup{flat}})$ the \textit{holomorphic polytope} of $N$. Note that the holomorphic 
	polytope depends on the choice of a point $p\in N^{\circ}$.

\subsection{Examples from Information Geometry.}\label{nknkefnknk} 

	The concept of torification was motivated, in the first place, by the 
	connection between K\"{a}hler geometry, Information Geometry and Quantum Mechanics.
	In this section, we illustrate this connection with a few examples. The reader interested in 
	Information Geometry may consult \cite{Jost2,Amari-Nagaoka,Murray}.

\begin{definition}\label{def:5.1}
	A \textit{statistical manifold} is a pair $(S,j)$, where $S$ is a manifold and where $j$ 
	is an injective map from $S$ to the space of all probability density functions $p$ 
	defined on a fixed measure space $(\Omega,dx)$: 
	\begin{eqnarray*}
		j:S\hookrightarrow\Bigl\{ p:\Omega\to\mathbb{R}\;\Bigl|\; p\:\textup{is measurable, }p\geq 
		0\textup{ and }\int_{\Omega}p(x)dx=1\Bigr\}.
	\end{eqnarray*}
\end{definition}

	If $\xi=(\xi_{1},...,\xi_{n})$ is a coordinate system on a statistical manifold $S$, then we shall 
	indistinctly write $p(x;\xi)$ or $p_{\xi}(x)$ for the probability density function determined by $\xi$.

	Given a ``reasonable" statistical manifold $S$, it is possible to define a metric $h_{F}$ and a 
	family of connections $\nabla^{(\alpha)}$ on $S$ $(\alpha\in\mathbb{R})$ in the following way: 
	for a chart $\xi=(\xi_{1},...,\xi_{n})$ of $S$, define
	\begin{alignat*}{1}
		\bigl(h_F\bigr)_{\xi}\bigl(\partial_{i},\partial_{j}\bigr) & :=
		\mathbb{E}_{p_{\xi}}\bigl(\partial_{i}\ln\bigl(p_{\xi}\bigr)\cdotp\partial_{j}\ln\bigl(p_{\xi}\bigr)\bigr),
		\nonumber\\
		\Gamma_{ij,k}^{(\alpha)}\bigl(\xi\bigr) & :=
		\mathbb{E}_{p_{\xi}}\bigl[\bigl(\partial_{i}\partial_{j}\ln\bigl(p_{\xi}\bigr)
		+\tfrac{1-\alpha}{2}\partial_{i}\ln\bigl(p_{\xi}\bigr)\cdotp\partial_{j}\ln\bigl(p_{\xi}\bigr)\bigr)
		\partial_{k}\ln\bigl(p_{\xi}\bigr)\bigr],\label{eq:48}\nonumber
	\end{alignat*}
	where $\mathbb{E}_{p_{\xi}}$ denotes the mean, or expectation, with respect to the probability 
	$p_{\xi}dx$, and where $\partial_{i}$ is a shorthand for $\tfrac{\partial}{\partial\xi_{i}}$. 
	In the formulas above, it is assumed that the function $p_{\xi}(x)$ is smooth with respect to 
	$\xi$ and that the expectations are finite. When the first formula above defines a smooth metric $h_{F}$ on $S$, 
	it is then called \textit{Fisher metric}. In this case, the 
	$\Gamma_{ij,k}^{(\alpha)}$'s define a connection $\nabla^{(\alpha)}$ on $S$ via the formula 
	$\Gamma_{ij,k}^{(\alpha)}(\xi)=(h_F)_{\xi}(\nabla_{\partial_{i}}^{(\alpha)}\partial_{j},\partial_{k})$, 
	which is called the \textit{$\alpha$-connection}.

%
	Among the $\alpha$-connections, the $1$-connection is particularly important and 
	is usually referred to as the \textit{exponential connection}, also denoted by $\nabla^{(e)}$. 
	In this paper, we will only consider statistical manifolds $S$ for which the Fisher metric $h_{F}$ and
	exponential connection $\nabla^{(e)}$ are well defined.


	We now recall the definition of an exponential family. 
\begin{definition}\label{def:5.3} 
	An \textit{exponential family} $\mathcal{E}$ on a measure space $(\Omega,dx)$ is a set of probability 
	density functions $p(x;\theta)$ of the form 
	$p(x;\theta)=\exp\big\{ C(x)+\sum_{i=1}^{n}\theta_{i}F_{i}(x)-\psi(\theta)\big\},$ 
	where $C,F_1...,F_n$ are measurable functions on $\Omega$, $\theta=(\theta_{1},...,\theta_{n})$ 
	is a vector varying in an open subset $\Theta$ of $\mathbb{R}^{n}$ and where $\psi$ is a function defined on $\Theta$.
\end{definition}
	In the above definition, it is assumed that the family of functions $\{1,F_1,...,$ $F_n\}$ 
	is linearly independent, so that the map $p(x,\theta)\mapsto\theta$ becomes a bijection, hence defining a global 
	chart for $\mathcal{E}$. The parameters $\theta_{1},...,\theta_{n}$ are called the 
	\textit{natural} or \textit{canonical parameters} of the exponential family $\mathcal{E}$. 
	
\begin{example}[\textbf{Binomial distribution}]\label{exa:5.6}
	Let $\mathcal{B}(n)$ be the set of Binomial distributions $p(k)=\binom{n}{k}q^{k}\bigl(1-q\bigr)^{n-k}$, $q\in (0,1)$, 
	defined over $\Omega:=\{0,...,n\}$. It is a 1-dimensional exponential family, because 
	$p(k)=\exp\big\{ C(k)+\theta F(k)-\psi(\theta)\big\}$, where $\theta=\ln\big(\frac{q}{1-q}\big)$, $C(k)= \ln\binom{n}{k}$, 
	$F(k)=k$, $k\in \Omega$, and $\psi(\theta)=n\ln\big(1+\exp(\theta)\big)$. 
\end{example}
\begin{example}[\textbf{Categorical distribution}]\label{exa:5.5}
	Let $\Omega=\{ x_{1},...,x_{n},...\}$ be a finite set and let $\mathcal{P}_{n}^{\times}$ be the set of maps
	$p:\Omega\to \mathbb{R}$ satisfying $p(x)>0$ for all $x\in \Omega$ and $\sum_{x\in \Omega}p(x)=1$. Then 
	$\mathcal{P}_{n}^{\times}$ is an exponential family of dimension $n-1$. Indeed, elements in $\mathcal{P}_{n}^{\times}$ 
	can be parametrized as follows: $p(x;\theta)=\exp\big\{\sum_{i=1}^{n-1}\theta_{i}F_{i}(x)-\psi(\theta)\big\}$,
	where $x\in \Omega$, $\theta=(\theta_{1},...,\theta_{n-1})\in\mathbb{R}^{n-1},$ 
	$F_{i}(x_{j})=\delta_{ij}$ \textup{(Kronecker delta)} and $\psi(\theta)=\ln\big(1+\sum_{i=1}^{n-1}e^{\theta_{i}}\big)$. 
%
%
%
%
\end{example}

\begin{example}[\textbf{Poisson distribution}]
	A Poisson distribution is a distribution over $\Omega=\mathbb{N}=\{0,1,...\}$ of the form 
	$p(k;\lambda)=e^{-\lambda}\tfrac{\lambda^{k}}{k!}$, 
	where $k\in \mathbb{N}$ and $\lambda>0$. Let $\mathscr{P}$ denote the set of all Poisson distributions 
	$p(\,.\,;\lambda)$, $\lambda>0$. The set $\mathscr{P}$ is an exponential family, because 
	$p(k,\lambda)=\textup{exp}\big(C(k)+F(k)\theta-\psi(\theta)\big),$ where 
	$C(k)=-\ln(k!)$, $F(k)=k$, $\theta=\ln(\lambda)$, and $\psi(\theta)=\lambda=e^{\theta}.$
\end{example}

	Under mild assumtions, it can be proved that an exponential family $\mathcal{E}$ endowed with 
	the Fisher metric $h_{F}$ and exponential connection $\nabla^{(e)}$ is a dually flat manifold 
	(see \cite{Amari-Nagaoka}). This is the case, for example, if $\Omega$ is a finite set endowed 
	with the counting measure (see \cite{shima}, Chapter 6). In the sequel, we will always regard 
	an exponential family  $\mathcal{E}$ as a dually flat manifold. 

	Since an exponential family is dually flat, it is natural to ask whether it is toric. 
	Below we describe three examples. Let $\mathbb{P}_{n}(c)$ be the complex projective space of complex dimension $n$, 
	endowed with the Fubini-Study metric normalized 
	in such a way that the holomorphic sectional curvature is $c>0$. Let $\Phi_{n}$ be the action of 
	$\mathbb{T}^{n}$ on $\mathbb{P}_{n}(c)$ defined by 
	\begin{eqnarray*}
		\Phi_{n}([t],[z])= [e^{2i\pi t_{1}}z_{1},...,e^{2i\pi t_{n}}z_{n},z_{n+1}]. \,\,\,\,\,\,\,(\textup{homogeneous coordinates})
	\end{eqnarray*}
\begin{proposition}\label{nfeknwknwknk}
	In each case below, the torus action is a regular torification of the indicated exponential family $\mathcal{E}$. \\

	\begin{tabular}{lll}
		$(a)$ &  $\mathcal{E}=\mathcal{P}_{n+1}^{\times}$,    &    
						$\Phi_{n}:\mathbb{T}^{n}\times \mathbb{P}_{n}(1)\to \mathbb{P}_{n}(1)$.\\[0.4em]
		$(b)$ & 
	$\mathcal{E}=\mathcal{B}(n)$,                &    $\Phi_{1}:\mathbb{T}^{1}\times 
								\mathbb{P}_{1}(\tfrac{1}{n})\to \mathbb{P}_{1}(\tfrac{1}{n})$.\\[0.4em]
		$(c)$ & $\mathcal{E}=\mathscr{P}$,                   & $\mathbb{T}^{1}\times \mathbb{C}\to \mathbb{C}$, $([t],z)\mapsto e^{2i\pi t}z$.	
	\end{tabular}
\end{proposition}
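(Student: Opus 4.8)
The plan is to verify, in each of the three cases, that the given K\"{a}hler manifold $N$ together with the indicated torus action satisfies the definition of a torification of the indicated exponential family $\mathcal{E}$, and that it is regular. Regularity is the easy half: $\mathbb{P}_{n}(c)$ is compact, simply connected, complete, and its Fubini-Study metric is real analytic, so cases $(a)$ and $(b)$ are regular by Remark \ref{ndkdnknknknk}(b); for case $(c)$, $N=\mathbb{C}$ with its flat metric is connected, simply connected, complete, and real analytic, so it is regular directly. The substance is therefore to produce, for each case, a parallel lattice $L\subset T\mathcal{E}$ with respect to $\nabla^{(e)}$, a generator $X\in\textup{gen}(L)$, and a holomorphic isometric diffeomorphism $F:\mathcal{E}_{L}\to N^{\circ}$ intertwining $(\Phi_{X})_{a}$ with $\Phi_{a}$.

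First I would set up the common framework. For an exponential family $\mathcal{E}$ with natural parameters $\theta=(\theta_1,\dots,\theta_n)\in\Theta\subseteq\mathbb{R}^n$, the exponential connection $\nabla^{(e)}$ is the flat affine connection for which the $\theta_i$ are affine coordinates, so the coordinate vector fields $X_k=\partial/\partial\theta_k$ are $\nabla^{(e)}$-parallel and form a global frame; taking $L$ to be the standard integer lattice generated by $X=(\partial/\partial\theta_1,\dots,\partial/\partial\theta_n)$ gives the natural candidate parallel lattice, with $\mathcal{E}_L=T\mathcal{E}/\Gamma(L)$ and the torus action $\Phi_X$ of \eqref{nkdnknewknek}. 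Via Dombrowski's construction, $T\mathcal{E}\cong\Theta\times\mathbb{R}^n$ carries the complex coordinates $z_k=\theta_k+ir_k$ and the K\"{a}hler metric with matrix $\big[\begin{smallmatrix}h_F&0\\0&h_F\end{smallmatrix}\big]$ where $h_F$ is the Hessian of $\psi$. The plan is then to identify the quotient $\mathcal{E}_L$ explicitly in complex coordinates: quotienting by $r_k\mapsto r_k+1$ turns the $r$-directions into circles, so $\mathcal{E}_L\cong\Theta\times(\mathbb{R}/\mathbb{Z})^n$, and through $w_k=e^{2\pi i z_k}=e^{2\pi i\theta_k}e^{-2\pi r_k}$ one should recognize $\mathcal{E}_L$ biholomorphically as an open subset of $(\mathbb{C}^*)^n$. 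The desired map $F$ will be this identification composed with the standard embedding of $(\mathbb{C}^*)^n$ into $N^{\circ}$ coming from the homogeneous-coordinate description of the torus action.

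The case-by-case work is then a computation of potentials: for $(a)$, $\psi(\theta)=\ln(1+\sum_{i}e^{\theta_i})$ as in Example \ref{exa:5.5}, and one checks that the metric induced on $\Theta$ by the Fisher metric of $\mathcal{P}_{n+1}^{\times}$ matches the metric that the Fubini-Study form of $\mathbb{P}_n(1)$ induces on the dense orbit $N^{\circ}=\{[z_1,\dots,z_n,1]:z_i\neq 0\}$ under the submersion $\sigma$ — equivalently, that $\psi$ is (up to the normalization fixing $c=1$) the K\"{a}hler potential of Fubini-Study in the affine chart $z_{n+1}=1$. For $(b)$, $\mathcal{B}(n)$ is the one-dimensional case with $\psi(\theta)=n\ln(1+e^\theta)$ from Example \ref{exa:5.6}, which is $n$ times the potential of case $(a)$ in dimension one, matching the holomorphic sectional curvature $c=1/n$ of $\mathbb{P}_1$. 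For $(c)$, $\psi(\theta)=e^\theta$ from the Poisson example, whose Hessian $e^\theta\,d\theta^2$ pulls back under $\theta\mapsto z=\theta+ir$ and then $w=e^{2\pi i z}$ to the flat metric on $\mathbb{C}^*\subset\mathbb{C}$, and one verifies $N^{\circ}=\mathbb{C}^*$ with the rotation action. In each case the equivariance $F\circ(\Phi_X)_a=\Phi_a\circ F$ is immediate from the exponential form $w_k=e^{2\pi i z_k}$, since translating $r_k$ (the $\Phi_X$ action) rescales $|w_k|$ while the homogeneous-coordinate rotation $\Phi_a$ with $a=[t]$ multiplies $w_k$ by $e^{2\pi i t_k}$ — wait, these must be reconciled: the torus action $\Phi_X$ on $\mathcal{E}_L$ comes from translation in the fiber directions $X_k$, which after quotienting are the $r_k$-circles, and under $z_k=\theta_k+ir_k$, $w_k=e^{2\pi iz_k}$ this is exactly multiplication of $w_k$ by a unit complex number, matching $\Phi_n$.

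The main obstacle I expect is the metric-matching step: showing that the Fisher metric of the exponential family, transported through Dombrowski's construction and the quotient, agrees on the nose (not just up to conformal or affine ambiguity) with the Fubini-Study metric restricted to the open orbit, with the correct curvature normalization. This requires carefully computing the K\"{a}hler potential of $\mathbb{P}_n(c)$ in the affine chart and comparing its Hessian with the Hessian of $\psi$, keeping track of the factor of $2\pi$ in the definition of $\sigma$ and of the normalization convention relating $c$ to the potential. Everything else — flatness of $\nabla^{(e)}$, the lattice being parallel, the biholomorphism onto $N^{\circ}$, equivariance, and regularity — is either structural or a short verification. I would also remark that, granting Theorem \ref{newdnkekfwndknk}, it suffices to exhibit one regular torification, so any convenient choice of $L$, $X$, $F$ is as good as another.
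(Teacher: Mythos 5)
First, a remark on the comparison itself: the paper does not prove Proposition \ref{nfeknwknwknk} — it states the result and refers to \cite{molitor-toric} — so there is no in-paper argument to measure you against, and I am judging the proposal on its own terms. Your overall strategy (exhibit a parallel lattice $L$ for $\nabla^{(e)}$ with a generator $X$, build $F:\mathcal{E}_{L}\to N^{\circ}$ from Dombrowski's complex coordinates, verify isometry and equivariance, and get regularity from the standard properties of $\mathbb{P}_{n}(c)$ and $\mathbb{C}$) is the right one. But the one explicit formula you commit to is wrong, and it is wrong at exactly the point you flag as needing reconciliation. In Dombrowski's coordinates $z_{k}=q_{k}+ir_{k}$ the action $\Phi_{X}$ translates the \emph{fiber} coordinate $r_{k}$, and $e^{2\pi i(z_{k}+it)}=e^{-2\pi t}\,e^{2\pi i z_{k}}$: your map $w_{k}=e^{2\pi i z_{k}}$ turns the torus action into a radial scaling, not a rotation, so the claimed equivariance with $\Phi_{n}$ fails. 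The correct exponential has a real coefficient, $w_{k}=e^{c z_{k}}$ — compare the paper's own conventions $\Phi^{\mathbb{C}}\big((e^{2\pi z_{1}},\dots,e^{2\pi z_{n}}),p\big)$ and $\sigma(z)=(\ln|z_{1}|/2\pi,\dots)$, under which $\mathrm{Re}(z)$ is the base direction and $\mathrm{Im}(z)$ drives the rotation.

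The second gap is that the lattice is not a free choice, contrary to your closing remark. By Section \ref{nfeknkfnekndk} every toric parametrization of a regular torification uses the same fundamental lattice, and the requirement that $F$ be an \emph{isometry} onto $N^{\circ}$ pins down the normalization of the generators; the unit lattice $\mathbb{Z}\cdot\partial/\partial\theta_{k}$ generally does not work. For instance, for $\mathscr{P}$ the Dombrowski metric is $e^{\theta}\,|dz|^{2}$, and a holomorphic isometry onto $(\mathbb{C}^{*},|dw|^{2})$ must satisfy $|f'(z)|^{2}=e^{\theta}$, which forces $f(z)=2e^{z/2}$ up to constants; its periodicity $f(z+4\pi i)=f(z)$ shows the fundamental lattice is $4\pi\mathbb{Z}\cdot\partial/\partial\theta$, and only with that rescaling does $r\mapsto r+4\pi t$ become $w\mapsto e^{2\pi i t}w$ as required. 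The analogous computations in cases $(a)$ and $(b)$ — rescaling the generators and matching $\psi(\theta)=\ln(1+\sum_{i}e^{\theta_{i}})$, resp.\ $n\ln(1+e^{\theta})$, against the Fubini--Study potential in the affine chart with the normalizations $c=1$, resp.\ $c=1/n$ — constitute essentially the entire mathematical content of the proposition. You correctly identify this metric-matching step as the crux, but you do not carry it out, so what you have is a sound plan with an incorrect key formula rather than a proof.
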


	\noindent For a proof and more examples, see \cite{molitor-toric}.

\section{Diffeomorphisms preserving a parallel lattice} \label{ndknkkfeneknkndk}

	Let $(M,h,\nabla)$ be a connected dually flat manifold of dimension $n$ (not necessarily toric), and 
	suppose $L\subset TM$ is a parallel lattice with respect to $\nabla$, generated by $X=(X_{1},...,X_{n})$. 

	The objective of this section is to show Proposition \ref{nfekwwndkefnk} below, which is a key technical result. 
	We will adopt the notation of Section \ref{nkwwnkwnknknfkn}. 
	Thus $M_{L}$ is the quotient manifold $TM/\Gamma(L)\cong TM/\mathbb{Z}^{n}$, $q_{L}:TM\to M_{L}$ is the corresponding 
	quotient map and $\Phi=\Phi_{X}$ is the torus action on $M_{L}$ associated to $X$.

\begin{definition}
	A diffeomorphism $\psi:M\to M$ \textit{preserves} the parallel lattice $L$ if $\psi_{*}(L)= L$.
\end{definition}

	Note that the set of all diffeomorphisms of $M$ preserving a parallel lattice $L$ is a group, which we denote by 
	$\textup{Diff}(M,L)$. Below, we will mostly focus on diffeomorphisms of $\textup{Diff}(M,L)$ that are isometries. 
	We will use the following notation:
	\begin{itemize}
		\item $\textup{Isom}(M,h)$ is the group of isometries of $(M,h)$.
		\item $\textup{Diff}(M,\nabla)$ is the group of diffeomorphisms of $M$ that are affine with respect 
			to $\nabla$.
		\item $\textup{Diff}(M,h,L)=\textup{Diff}(M,L)\cap \textup{Isom}(M,h)$.
		\item $\textup{Diff}(M,h,\nabla)=\textup{Diff}(M,\nabla)\cap \textup{Isom}(M,h)$.  
	\end{itemize}

	Let $\textup{GL}(n,\mathbb{Z})$ be the group of $n\times n$ matrices with integer entries. 
	The following lemma is immediate.

\begin{lemma}	\label{nfekndkwenknfk}
	Let $\psi:M\to M$ be a diffeomorphism. Then $\psi$ preserves $L$ if and only if 
	there exists a matrix $R=(r_{ij})\in \textup{GL}(n,\mathbb{Z})$ such that 
	$\psi_{*_{p}}X_{k}(p)=\sum_{i=1}^{n}r_{ik}X_{i}(\psi(p))$
	for all $p\in M$ and all $k\in \{1,...,n\}$. 
\end{lemma}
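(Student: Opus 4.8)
The plan is to prove both directions of the equivalence by unwinding the definition of a parallel lattice and using the fact that the generating vector fields $X_1,\dots,X_n$ are parallel with respect to $\nabla$, hence $\nabla$-constant along any curve.

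First I would prove the forward direction. Suppose $\psi \in \textup{Diff}(M,L)$, i.e. $\psi_*(L) = L$. Fix a point $p \in M$. Since $\{X_1(p),\dots,X_n(p)\}$ is a basis of $T_pM$ and $\psi$ is a diffeomorphism, $\{\psi_{*_p}X_1(p),\dots,\psi_{*_p}X_n(p)\}$ is a basis of $T_{\psi(p)}M$; moreover each $\psi_{*_p}X_k(p)$ lies in $\psi_*(L) = L$, so we may write $\psi_{*_p}X_k(p) = \sum_{i=1}^n r_{ik}(p)X_i(\psi(p))$ with $r_{ik}(p) \in \mathbb{Z}$. The matrix $R(p) = (r_{ik}(p))$ is invertible over $\mathbb{Z}$ because it is the change-of-basis matrix between two bases that both generate the same lattice fibre $L \cap T_{\psi(p)}M$ — explicitly, applying the same argument to $\psi^{-1}$ at $\psi(p)$ produces an integer matrix that must be a two-sided inverse of $R(p)$. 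Hence $R(p) \in \textup{GL}(n,\mathbb{Z})$. The only remaining point is that $R(p)$ is independent of $p$: this is where I use that $M$ is connected and that the $X_i$ are parallel. Indeed, each entry $r_{ik}:M \to \mathbb{Z}$ is a continuous function — it can be recovered as the result of applying a (locally smoothly varying) dual coframe to the smooth vector field $p \mapsto \psi_{*_p}X_k(p)$ — so it is locally constant, and $M$ connected forces it to be globally constant. Set $R = R(p)$; this gives the claimed formula $\psi_{*_p}X_k(p) = \sum_i r_{ik}X_i(\psi(p))$ for all $p$.

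For the converse, suppose such a matrix $R \in \textup{GL}(n,\mathbb{Z})$ exists. A general element of $L$ has the form $u = \sum_k m_k X_k(p)$ with $m_k \in \mathbb{Z}$, $p \in M$. Then $\psi_{*_p}u = \sum_k m_k \psi_{*_p}X_k(p) = \sum_k m_k \sum_i r_{ik}X_i(\psi(p)) = \sum_i \big(\sum_k r_{ik}m_k\big)X_i(\psi(p))$, and since $R$ has integer entries the coefficients $\sum_k r_{ik}m_k$ are integers, so $\psi_{*_p}u \in L$. This shows $\psi_*(L) \subseteq L$. Applying the same reasoning to $\psi^{-1}$ (whose associated matrix is $R^{-1} \in \textup{GL}(n,\mathbb{Z})$, obtained by inverting the displayed relation) gives $(\psi^{-1})_*(L) \subseteq L$, equivalently $L \subseteq \psi_*(L)$. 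Hence $\psi_*(L) = L$, i.e. $\psi$ preserves $L$.

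Since the lemma is asserted to be ``immediate,'' the proof should be brief; the one genuinely non-formal step — the only place a reader might pause — is the local constancy of the integer matrix $R(p)$, so I would make sure to flag the use of connectedness of $M$ and smoothness of the vector fields $\psi_{*}X_k$ there, and otherwise keep the argument to the two short computations above. No serious obstacle is expected.
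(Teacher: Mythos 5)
Your proof is correct and is exactly the argument the paper has in mind: the paper states the lemma without proof (``The following lemma is immediate''), and your two directions --- integrality of the change-of-basis matrix plus local constancy from connectedness, and the converse via $R$ and $R^{-1}$ both having integer entries --- fill in precisely the expected details. The only cosmetic remark is that the constancy of $R(p)$ uses only smoothness of the frame $(X_{1},\dots,X_{n})$ and connectedness of $M$, not parallelism per se (parallelism is what makes $L$ a parallel lattice but is not needed at that step).
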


	A simple consequence of Lemma \ref{nfekndkwenknfk} is the following 

\begin{lemma}\label{nnknefkwnknefknk}
	$\textup{Diff}(M,L)\subseteq \textup{Diff}(M,\nabla)$.
\end{lemma}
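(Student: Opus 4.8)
The plan is to combine the frame characterization of lattice-preserving diffeomorphisms from Lemma~\ref{nfekndkwenknfk} with the fact that a connection is determined by its values on a global frame. Let $\psi\in\textup{Diff}(M,L)$. By Lemma~\ref{nfekndkwenknfk} there is a \emph{constant} matrix $R=(r_{ij})\in\textup{GL}(n,\mathbb{Z})$ with $\psi_{*_p}X_k(p)=\sum_{i=1}^{n}r_{ik}X_i(\psi(p))$ for every $p\in M$ and every $k$. Since $\psi$ is a diffeomorphism the pushforward vector fields $\psi_*X_k$ are well defined, and reading this identity at an arbitrary point $q=\psi(p)$ (using that $\psi$ is onto) shows that $\psi_*X_k=\sum_{i=1}^{n}r_{ik}X_i$ as vector fields on $M$, with \emph{constant} coefficients.

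Next I would verify the affine condition $\psi_*(\nabla_YZ)=\nabla_{\psi_*Y}(\psi_*Z)$, and for this it is enough to check it when $Z$ runs over the global frame $X_1,\dots,X_n$, since both sides depend tensorially on $Y$ and satisfy the Leibniz rule in $Z$. On the one hand $\psi_*(\nabla_YX_k)=\psi_*(0)=0$, because the $X_k$ are $\nabla$-parallel. On the other hand, using the formula above and again that the $X_i$ are parallel and the $r_{ik}$ are constants,
\[
	\nabla_{\psi_*Y}(\psi_*X_k)=\nabla_{\psi_*Y}\Bigl(\sum_{i=1}^{n}r_{ik}X_i\Bigr)=\sum_{i=1}^{n}r_{ik}\,\nabla_{\psi_*Y}X_i=0.
\]
Hence the two sides agree on the frame, so $\psi$ is affine with respect to $\nabla$, i.e.\ $\psi\in\textup{Diff}(M,\nabla)$, which proves $\textup{Diff}(M,L)\subseteq\textup{Diff}(M,\nabla)$.

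There is no real difficulty in this argument; the only point carrying weight is that the change-of-frame matrix $R$ of Lemma~\ref{nfekndkwenknfk} does not depend on the base point $p$. This is exactly what upgrades ``$\psi$ preserves $L$'' to ``$\psi_*$ sends the parallel frame to a constant-coefficient combination of itself'', from which affineness follows at once.
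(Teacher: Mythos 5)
Your proof is correct, and it takes a mildly different route from the paper's. The paper argues in coordinates: since the $X_i$ are $\nabla$-parallel they commute, so around any $p$ and around $\psi(p)$ one can choose affine charts in which the frame becomes the coordinate frame; Lemma~\ref{nfekndkwenknfk} then says the Jacobian of $\psi$ in these charts is the constant matrix $R$, which is exactly the local definition of an affine map. You instead verify the intertwining identity $\psi_*(\nabla_Y Z)=\nabla_{\psi_*Y}(\psi_*Z)$ coordinate-freely, reducing to $Z=X_k$ because the difference of the two sides (equivalently, of $\nabla$ and the pullback connection) is $C^\infty$-linear in $Z$, and then observing that both sides vanish on the parallel frame since $R$ has constant entries. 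Both arguments hinge on the same two facts --- the frame is $\nabla$-parallel and the change-of-frame matrix of Lemma~\ref{nfekndkwenknfk} does not depend on the base point --- so neither buys much over the other; yours avoids introducing charts at the cost of invoking (correctly) the tensoriality of a difference of connections, while the paper's is slightly more economical because ``constant Jacobian in affine coordinates'' is literally the definition of affine. One cosmetic remark: the reduction to the frame only needs $C^\infty$-linearity in $Z$; tensoriality in $Y$ plays no role since you check the identity for arbitrary $Y$ anyway.
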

\begin{proof}
	Let $\psi\in \textup{Diff}(M,L)$ be arbitrary. By hypothesis, $X=(X_{1},...,X_{n})$ is a generator for $L$, and so
	each vector field $X_{i}$ is parallel with respect to $\nabla$, which implies that 
	$[X_{i},X_{j}]=\nabla_{X_{i}}X_{j}-\nabla_{X_{j}}X_{i}=0$ for all $i,j\in \{1,...,n\}$. Thus, for a given $p\in M$, 
	there are affine coordinate systems $x:U\to \mathbb{R}^{n}$ and $y:V\to \mathbb{R}^{n}$ 
	defined on neighborhoods $U$ and $V$ of $p$ and $\psi(p)$, respectively, 
	such that $\tfrac{\partial}{\partial x_{i}}=X_{i}$ on $U$ and $\tfrac{\partial}{\partial y_{i}}=X_{i}$ on 
	$V$ for all $i\in \{1,...,n\}$. By Lemma \ref{nfekndkwenknfk}, there is $R=(r_{ij})\in \textup{GL}(n,\mathbb{Z})$ such that 
	$\psi_{*_{p}}X_{k}(p)=\sum_{i=1}^{n}r_{ik}X_{i}(\psi(p))$ for all $k\in \{1,...,n\}$. Thus 
	the Jacobian matrix of $\psi$ in the coordinates $x$ and $y$ is the constant matrix $R$. 
	This shows that $\psi$ is affine with respect to $\nabla$.  
\end{proof}

	We now focus our attention on the group $\textup{Diff}(M,h,L)$.  
	Given $A\in \textup{GL}(n,\mathbb{Z})$, we will denote by $\rho_{A}:\mathbb{T}^{n}\to \mathbb{T}^{n}$ 
	the Lie group isomorphism defined by $\rho_{A}([t])=[At]$, where $[t]$ denotes the equivalence class of 
	$t\in \mathbb{R}^{n}$ in $\mathbb{T}^{n}=\mathbb{R}^{n}/\mathbb{Z}^{n}$. Let $\textup{Aut}(\mathbb{T}^{n})$ 
	denote the group of Lie group isomorphisms of the torus $\mathbb{T}^{n}$. It is well-known that 
	the map $\textup{GL}(n,\mathbb{Z})\to\textup{Aut}(\mathbb{T}^{n})$, $A\mapsto \rho_{A}$ is a group isomorphism 
	(see, e.g., \cite{tammo}, Chapter IV).

	By Lemma \ref{nfekndkwenknfk}, for every $\psi\in \textup{Diff}(M,h,L)$, there is a (necessarily unique) matrix 
	$R(\psi)=(r(\psi)_{ij})_{1\leq i,j\leq n}\in \textup{GL}(n,\mathbb{Z})$ such that 
	$\psi_{*_{p}}X_{k}(p)=\sum_{i=1}^{n}r(\psi)_{ik}X_{i}(\psi(p))$ for all $p\in M$ and all $k\in \{1,...,n\}$. Clearly, the map 
	\begin{eqnarray}\label{neknkreknkfenk}
		\begin{tabular}{lrlll}
			$$ & $\textup{Diff}(M,h,L)$ & $\to$    &$\textup{Aut}(\mathbb{T}^{n}),$ \\[0.5em]
			    &   $\psi$              & $\mapsto$ & $\rho_{R(\psi)}$,
		\end{tabular}
	\end{eqnarray}
	is a group homomorphism and hence we can form the semidirect product $\mathbb{T}^{n}\rtimes \textup{Diff}(M,h,L)$.
	By definition, it is the Cartesian product $\mathbb{T}^{n}\times \textup{Diff}(M,h,L)$ together with the group multiplication
	\begin{eqnarray*}
		(a,\psi)\cdot (a',\psi')=\big(\rho_{R(\psi)}(a') a, \psi\circ \psi'\big). 
	\end{eqnarray*}
	
	Let $\Gamma$ be the group action of $\mathbb{T}^{n}\rtimes \textup{Diff}(M,h,L)$ on $M_{L}$ defined by 
	\begin{eqnarray}\label{nkwdnknefknk}
		\Gamma\Big(([t],\psi), q_{L}(u)\Big)=\Phi_{[t]}\big(q_{L}(\psi_{*_{p}}u)\big)
		=q_{L}\bigg(\psi_{*_{p}}u+\sum_{i=1}^{n}t_{i}X_{i}(\psi(p))\bigg), 
	\end{eqnarray}
	where $t=(t_{1},...,t_{n})\in \mathbb{R}^{n}$, $\psi\in \textup{Diff}(M,h,L)$ and $p=\pi(u)\in M$. 

	On $M_{L}$, there is a unique K\"{a}hler structure that turns $q_{L}:TM\to M_{L}$ into a 
	K\"{a}hler covering map (here $TM$ is endowed with the K\"{a}hler structure 
	coming from Dombrowski's construction). Let $g$ be the corresponding K\"{a}hler metric on $M_{L}$ 
	and let $\textup{Aut}(M_{L},g)^{\mathbb{T}^{n}}$ be the group of holomorphic isometries of $M_{L}$ that are 
	equivariant in the following sense: 
	for each $\varphi\in \textup{Aut}(M_{L},g)^{\mathbb{T}^{n}}$, there is a Lie group 
	isomorphism $\rho:\mathbb{T}^{n}\to \mathbb{T}^{n}$ such that 
	$\varphi\circ \Phi_{a}=\Phi_{\rho(a)}\circ \varphi$ for all $a\in \mathbb{T}^{n}$. 

	The next proposition is the main result of this section.

\begin{proposition}\label{nfekwwndkefnk}
	The map
	\begin{center}
		\begin{tabular}{lrlll}
			$$ &   $\mathbb{T}^{n}\rtimes \textup{Diff}(M,h,L)$ & 
			   $\to$    &$\textup{Aut}(M_{L},g)^{\mathbb{T}^{n}},$ \\[0.5em]
			    &   $a$              & $\mapsto$ & 
			    $\Gamma_{a}$,
		\end{tabular}
	\end{center}
	is a group isomorphism. 
\end{proposition}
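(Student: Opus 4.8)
The plan is to show that the map $\Xi:(a,\psi)\mapsto \Gamma_{(a,\psi)}$ is (i) well-defined, i.e.\ each $\Gamma_{(a,\psi)}$ really is a holomorphic isometry of $M_L$ lying in $\textup{Aut}(M_L,g)^{\mathbb{T}^n}$; (ii) a group homomorphism; (iii) injective; and (iv) surjective. For (i), fix $(a,\psi)=([t],\psi)$. Since $\psi$ is an isometry of $(M,h)$ which is affine with respect to $\nabla$ (affineness is automatic by Lemma~\ref{nnknefkwnknefknk}), the differential $\psi_*:TM\to TM$ is holomorphic and isometric for the Dombrowski K\"{a}hler structure — this is exactly the coordinate computation that in affine coordinates $x$ on $U$ and $y$ on $V$ (chosen as in the proof of Lemma~\ref{nnknefkwnknefknk}, so that $\partial/\partial x_i=\partial/\partial y_i=X_i$) the map $\psi_*$ has constant Jacobian $R(\psi)\in\textup{GL}(n,\mathbb{Z})$, hence is affine on $TM$ in the induced complex coordinates $z=q+ir$, and preserves $g=\mathrm{diag}(h,h)$ because $\psi^*h=h$. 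Moreover $\psi_*$ normalizes the deck group: $\psi_*\circ\gamma=\gamma'\circ\psi_*$ for $\gamma,\gamma'\in\Gamma(L)$ related by $R(\psi)$, precisely because $\psi_*(L)=L$. Therefore $\psi_*$ descends to a holomorphic isometry of $M_L=TM/\Gamma(L)$, and composing with the holomorphic isometry $\Phi_{[t]}$ gives $\Gamma_{([t],\psi)}\in\textup{Aut}(M_L,g)$. The equivariance relation $\Gamma_{([t],\psi)}\circ\Phi_b=\Phi_{\rho_{R(\psi)}(b)}\circ\Gamma_{([t],\psi)}$ for all $b\in\mathbb{T}^n$ follows by writing both sides on representatives $q_L(u)$ using \eqref{nkdnknewknek} and \eqref{nkwdnknefknk} and the parallelism $\psi_{*_p}X_k(p)=\sum_i r(\psi)_{ik}X_i(\psi(p))$; so $\Gamma_{([t],\psi)}\in\textup{Aut}(M_L,g)^{\mathbb{T}^n}$.

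For (ii), that $\Xi$ is a homomorphism is the routine check that the action $\Gamma$ in \eqref{nkwdnknefknk} genuinely is an action of the semidirect product $\mathbb{T}^n\rtimes\textup{Diff}(M,h,L)$: expanding $\Gamma_{(a,\psi)}\circ\Gamma_{(a',\psi')}$ on a class $q_L(u)$ and using $(\psi\circ\psi')_* = \psi_*\circ\psi'_*$ together with $\psi_{*}(\sum t'_i X_i(\psi'(p)))=\sum t'_i \sum_k r(\psi)_{ki} X_k(\psi\psi'(p))$, i.e.\ the twist by $\rho_{R(\psi)}$, matches the semidirect-product multiplication $(a,\psi)\cdot(a',\psi')=(\rho_{R(\psi)}(a')a,\psi\circ\psi')$. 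For (iii), suppose $\Gamma_{([t],\psi)}=\mathrm{id}_{M_L}$. Applying $\pi_L$ and using $\pi_L\circ\Gamma_{([t],\psi)}=\psi\circ\pi_L$ (which is immediate from \eqref{nkwdnknefknk}) gives $\psi=\mathrm{id}_M$; then $\Gamma_{([t],\mathrm{id})}=\mathrm{id}$ forces $\Phi_{[t]}=\mathrm{id}$, hence $[t]=0$ since $\Phi=\Phi_X$ is effective.

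The main obstacle is surjectivity (iv). Given $\varphi\in\textup{Aut}(M_L,g)^{\mathbb{T}^n}$ with associated $\rho\in\textup{Aut}(\mathbb{T}^n)$, write $\rho=\rho_A$ for a unique $A\in\textup{GL}(n,\mathbb{Z})$. The idea is: because $\varphi$ is $\mathbb{T}^n$-equivariant, it descends along the orbit map — $\varphi$ maps $\Phi$-orbits to $\Phi$-orbits, and since the orbit space of $\Phi_X$ on $M_L$ is identified with $M$ via $\pi_L$, $\varphi$ induces a diffeomorphism $\psi:M\to M$ with $\pi_L\circ\varphi=\psi\circ\pi_L$. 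One then argues $\psi\in\textup{Diff}(M,h,L)$: pulling back the Dombrowski data through $q_L$, the lift $\widetilde\varphi:TM\to TM$ of $\varphi$ (which exists and is a holomorphic isometry since $q_L$ is the universal cover when $M$ is simply connected, or more generally a covering, and $TM$ has the pulled-back structure) is a holomorphic isometry of $(TM,g,J)$ commuting suitably with the $T_X$-action, hence by the rigidity of Dombrowski's construction it is the differential of an affine isometry of $(M,h,\nabla)$ composed with a lattice translation; equivariance with $\rho_A$ forces $\psi_*(L)=L$ with $R(\psi)=A$. Finally, after correcting by $\psi_*$ one is left with a holomorphic isometry commuting with $\Phi$ and covering the identity on $M$, which must be some $\Phi_{[t]}$, giving $\varphi=\Gamma_{([t],\psi)}$. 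The delicate points here are the existence and uniqueness of the lift $\widetilde\varphi$ to $TM$ and the rigidity statement that a holomorphic isometry of the Dombrowski K\"{a}hler manifold $TM$ which is affine-linear in the fibre directions descends to an affine isometry of the base; these should follow from the explicit form $g_u((v,w),(v',w'))=h_p(v,v')+h_p(w,w')$, $J_u(v,w)=(-w,v)$ of Section~\ref{nkwwnkwnknknfkn} together with the fact that parallel vector fields on $M$ correspond to constant vertical/horizontal vector fields on $TM$, so that the lattice $L$ is intrinsically recoverable from $(TM,g,J)$ and the zero section.
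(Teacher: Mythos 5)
Your overall strategy is the same as the paper's: reduce everything to showing that an equivariant holomorphic isometry of $M_{L}$ covers a map $\psi\in\textup{Diff}(M,h,L)$ and differs from $\Gamma_{(0,\psi)}$ by a torus translation. Parts (i)--(iii) are fine and match the paper in substance (the paper packages (i) into Lemmas \ref{nkwdnkenknk} and \ref{nfeknwdkenkfn} after trivializing $TM$ by the frame $X$, but your direct coordinate argument for why $\psi_{*}$ is holomorphic and isometric on $TM$ and descends to $M_{L}$ is correct). The problem is step (iv), which is where all the content lies, and there are two genuine gaps there.

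First, the existence of the global lift $\widetilde{\varphi}:TM\to TM$ of $\varphi$ through the covering $q_{L}$ is not justified. $TM$ is homotopy equivalent to $M$, which is only assumed connected, so $q_{L}$ is not in general the universal cover and the covering-space lifting criterion $(\varphi\circ q_{L})_{*}\pi_{1}(TM)\subseteq (q_{L})_{*}\pi_{1}(TM)$ has to be verified; you do not do this, and it is not obvious a priori (a posteriori it holds because the fibre component $\phi$ turns out to be constant, but that is what you are trying to prove). The paper sidesteps this entirely: it identifies $M_{L}$ with $\mathbb{T}^{n}\times M$, writes the equivariant map as $F(a,p)=(\rho(a)\phi(p),\psi(p))$, and only lifts $\phi$ \emph{locally} to $\tilde{\phi}:U\to\mathbb{R}^{n}$, which always exists; holomorphy is a local condition, so nothing global is needed. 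Second, the ``rigidity of Dombrowski's construction'' that you invoke --- namely that holomorphy of $F$ forces $\tilde{\phi}_{*_{p}}=0$ and $\psi_{*_{p}}X_{k}(p)=\sum_{i}A_{ik}X_{i}(\psi(p))$, and that isometry then forces $\psi^{*}h=h$ --- is precisely the computation carried out in the paper's Lemmas \ref{nkwdnkenknk} and \ref{nfeknwdkenkfn} (comparing $JF_{*}$ with $F_{*}J$ and $F^{*}g'$ with $g'$ via the explicit formulas of Lemma \ref{cnknksnkcnsknk}). You state the correct conclusion and correctly identify which explicit formulas it should come from, but you defer the verification with ``these should follow,'' so the heart of the surjectivity argument is asserted rather than proved. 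The conclusion is true and your outline would close up once these two points are supplied, but as written the proof of surjectivity is incomplete.
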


	The rest of this section is devoted to the proof of Proposition \ref{nfekwwndkefnk}. For our purposes, 
	it is convenient to trivialize $TM$ via the map 
	\begin{center}
		\begin{tabular}{lrlll}
			$$ &   $f\,\,\,:\,\,\,\mathbb{R}^{n}\times M$& 
				$\to$    &$TM,$ \\[0.5em]
			    &   $ \big((u_{1},...,u_{n}),p\big)$              & $\mapsto$ & 
			    $u_{1}X_{1}(p)+...+u_{n}X_{n}(p)$.
		\end{tabular}
	\end{center}
	Let $K:TTM\to TM$ be the connector associated to $\nabla$. Given $u\in T_{p}M$, we will 
	identify $T_{u}(TM)$ and $T_{p}M\oplus T_{p}M$ via the map $A\mapsto (\pi_{*}A,KA)$ (see Section \ref{nkwwnkwnknknfkn}). 
\begin{lemma}\label{jfkjdwkjdkjk}
	Under the identification $T_{f(u,p)}(TM)=T_{p}M\oplus T_{p}M$, 
	the derivative of $f$ at $(u,p)\in \mathbb{R}^{n}\times M$ is given by 
	\begin{eqnarray*}
		f_{*_{(u,p)}}(v,w)= \big(w, v_{1}X_{1}(p)+...+v_{n}X_{n}(p)\big),
	\end{eqnarray*}
	where $v=(v_{1},...,v_{n})\in \mathbb{R}^{n}\cong T_{u}\mathbb{R}^{n}$ and $w\in T_{p}M$. 
\end{lemma}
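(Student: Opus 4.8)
The plan is to compute $f_{*_{(u,p)}}$ directly in local coordinates and then translate the answer into the $T_pM\oplus T_pM$ identification via the map $A\mapsto(\pi_{*}A, KA)$. First I would fix $p\in M$ and choose an affine coordinate chart $x=(x_1,\dots,x_n)$ around $p$ adapted to the generator $X$, i.e.\ such that $\tfrac{\partial}{\partial x_i}=X_i$ on a neighborhood of $p$; this is possible because the $X_i$ are $\nabla$-parallel, hence commuting and locally coordinate vector fields, exactly as in the proof of Lemma \ref{nnknefkwnknefknk}. In the induced coordinates $(q,r)$ on $TM$ (from Section \ref{nkwwnkwnknknfkn}), the map $f$ reads $(q,r)(f((u_1,\dots,u_n),p)) = (x_1(p),\dots,x_n(p),u_1,\dots,u_n)$, since $f((u_1,\dots,u_n),p)=\sum_i u_i X_i(p)=\sum_i u_i\tfrac{\partial}{\partial x_i}\big|_p$. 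Therefore in these coordinates $f$ is simply $((u),p)\mapsto (x(p), u)$, so its derivative sends $\tfrac{\partial}{\partial u_a}\mapsto \tfrac{\partial}{\partial r_a}$ and $w=\sum_k w_k \tfrac{\partial}{\partial x_k}\big|_p \mapsto \sum_k w_k\tfrac{\partial}{\partial q_k}\big|_{f(u,p)}$ (here I identify $T_u\mathbb{R}^n\cong\mathbb{R}^n$ with the $\partial/\partial u_a$ directions).

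Next I would apply the identification $A\mapsto(\pi_{*}A, KA)$ to these two basic images. For the $\partial/\partial r_a$ direction: $\pi_{*}\big(\tfrac{\partial}{\partial r_a}\big)=0$ and, by the defining formula of the connector, $K_u\big(\tfrac{\partial}{\partial r_a}\big)=\tfrac{\partial}{\partial x_a}\big|_p=X_a(p)$; hence $\tfrac{\partial}{\partial r_a}\leftrightarrow (0, X_a(p))$. For the $\partial/\partial q_k$ direction: $\pi_{*}\big(\tfrac{\partial}{\partial q_k}\big)=\tfrac{\partial}{\partial x_k}\big|_p=X_k(p)$, and $K_u\big(\tfrac{\partial}{\partial q_k}\big)=\sum_{j,l}\Gamma_{kj}^{l}(p)u_j\tfrac{\partial}{\partial x_l}\big|_p$, which vanishes because in an affine chart for $\nabla$ the Christoffel symbols are identically zero; hence $\tfrac{\partial}{\partial q_k}\leftrightarrow (X_k(p),0)$. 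Combining, $f_{*_{(u,p)}}(v,w)$ corresponds to $\sum_k w_k (X_k(p),0) + \sum_a v_a (0,X_a(p)) = \big(w,\; v_1X_1(p)+\dots+v_nX_n(p)\big)$, where in the first summand I used that $w=\sum_k w_k X_k(p)$ as an element of $T_pM$, which is exactly the claimed formula.

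The only mild subtlety — and the step I would be most careful about — is bookkeeping of the two distinct copies of $T_pM$: one copy sits inside $\mathbb{R}^n\cong T_u\mathbb{R}^n$ via the basis $(X_1(p),\dots,X_n(p))$ (the $v$-slot), while the other is the genuine $T_pM$ that appears as the $w$-slot on the domain side and reappears on both slots of $T_pM\oplus T_pM$ on the target side. Writing everything in the single adapted chart $x$ makes these identifications literally coincide, so no genuine obstacle arises; it is essentially a one-line coordinate computation once the adapted chart is in place. I would close by remarking that the result is independent of the chosen affine chart adapted to $X$, which is automatic since both sides of the formula are chart-independent.
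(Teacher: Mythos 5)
Your proof is correct and follows essentially the same route as the paper's: a coordinate computation in the induced $(q,r)$ chart combined with the action of the connector on $\tfrac{\partial}{\partial q_a}$ and $\tfrac{\partial}{\partial r_a}$. The only difference is cosmetic — you choose the affine chart adapted to $X$ (so $\tfrac{\partial}{\partial x_i}=X_i$ and the constants $a_{kj}$ of the paper's proof become $\delta_{kj}$), which is legitimate since the parallel frame $X$ integrates to an affine chart, exactly as in Lemma \ref{nnknefkwnknefknk}.
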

\begin{proof}
	We must show that 
	\begin{description}
		\item[$(1)$] $\pi_{*}f_{*_{(u,p)}}(v,w)=w $ and 
		\item[$(2)$] $Kf_{*_{(u,p)}}(v,w)=v_{1}X_{1}(p)+...+v_{n}X_{n}(p)$. 
	\end{description}
	Let $p(t)$ be a smooth curve in $M$ such that $p(0)=p$ and $\tfrac{dp}{dt}(0)=w$. We have:
	\begin{eqnarray*}
		\pi_{*}f_{*_{(u,p)}}(v,w)=\dfrac{d}{dt}\bigg\vert_{0}(\pi\circ f)(u+tv,p(t))=\dfrac{d}{dt}\bigg\vert_{0}p(t)=w. 
	\end{eqnarray*}
	This shows $(1)$. For $(2)$, let $(x_{1},...,x_{n})$ be an affine coordinate system with respect 
	to $\nabla$ defined in a connected neighborhood $U\subseteq M$ of $p$. 
	Set $Z(t)=f(u+tv,p(t))$. Because each $X_{k}$ is parallel, there are real 
	numbers $a_{ij}$, $1\leq i,j\leq n$, such that 
	\begin{eqnarray}\label{nfkdnwkenfk}
		X_{k}=\sum_{j=1}^{n}a_{kj}\dfrac{\partial}{\partial x_{j}}
	\end{eqnarray}
	on $U$ and hence 
	\begin{eqnarray*}
		Z(t)=\sum_{k,j=1}^{n}(u_{k}+tv_{k})a_{kj}\dfrac{\partial}{\partial x_{j}}\bigg\vert_{p(t)}.
	\end{eqnarray*}
	Let $(q,r)=(q_{1},...,q_{n},r_{1},...,r_{n})$ be the local coordinates on $TM$ canonically associated to 
	$(x_{1},...,x_{n})$ (see Section \ref{nkwwnkwnknknfkn}). In the coordinates $(q,r)$, $Z(t)$ reads
	\begin{eqnarray*}
		Z(t)=\Big(p_{1}(t),...,p_{n}(t), \sum_{k=1}^{n}(u_{k}+tv_{k})a_{k1},...,\sum_{k=1}^{n}(u_{k}+tv_{k})a_{kn}\Big),
	\end{eqnarray*}
	where $p_{k}(t)=x_{k}(p(t))$. Writing $w=\sum_{i=1}^{n}w_{i}\tfrac{\partial}{\partial x_{i}}\big\vert_{p}$, 
	it follows that 
	\begin{eqnarray*}
		\dfrac{d}{dt}\bigg\vert_{0}Z(t)= \Big(w_{1},...,w_{n}, \sum_{k=1}^{n}v_{k}a_{k1},...,\sum_{k=1}^{n}v_{k}a_{kn}\Big), 
	\end{eqnarray*}
	or equivalently, that 
	\begin{eqnarray*}
		\dfrac{d}{dt}\bigg\vert_{0}Z(t)= \sum_{j=1}^{n}w_{j}\dfrac{\partial}{\partial q_{j}}\bigg\vert_{f(u,p)}+
		\sum_{j=1}^{n}\sum_{k=1}^{n}v_{k}a_{kj}\dfrac{\partial }{\partial r_{j}}\bigg\vert_{f(u,p)}.
	\end{eqnarray*}
	Since $K\tfrac{\partial}{\partial q_{j}}=0$ and $K\tfrac{\partial}{\partial r_{j}}=\tfrac{\partial}{\partial x_{j}}$ (see 
	Section \ref{nkwwnkwnknknfkn}), it follows from the linearity of $K:T_{f(u,p)}TM\to T_{p}M$ that 
	\begin{eqnarray*}
		Kf_{*_{(u,p)}}(v,w)=  K\dfrac{d}{dt}\bigg\vert_{0}Z(t)= 
		\sum_{j=1}^{n}\sum_{k=1}^{n}v_{k}a_{kj}\dfrac{\partial }{\partial x_{j}}\bigg\vert_{p} = \sum_{k=1}^{n}v_{k}X_{k}(p),
	\end{eqnarray*}
	where we have used \eqref{nfkdnwkenfk}. This shows (2) and concludes the proof of the lemma.
\end{proof}

	Let $(\overline{g},\overline{J}, \overline{\omega})$ be the unique K\"{a}hler structure on $\mathbb{R}^{n}\times M$ 
	that makes $f:\mathbb{R}^{n}\times M\to TM$ a K\"{a}hler isomorphism 
	(here $TM$ is endowed with the K\"{a}hler structure associated to $(h,\nabla)$ via 
	Dombrowski's construction).

\begin{lemma}\label{cnknksnkcnsknk}
	Let $p\in M$, $u,v,w\in \mathbb{R}^{n}$ and $A,B$ in $T_{p}M$. 
	Write $v=(v_{1},..,v_{n})$ and $w=(w_{1},...,w_{n})$. The following holds. 
	\begin{enumerate}[(1)]
	\item $\overline{g}_{(u,p)}\big((v,A),(w,B)\big)=\sum_{i,j=1}^{n}v_{i}w_{j}h_{p}(X_{i},X_{j})
			+h_{p}(A,B).$
	\item $\overline{J}_{(u,p)}\big(v, \sum_{k=1}^{n}w_{k}X_{k}(p)\big)=
			\big(w, -(v_{1}X_{1}(p)+...+v_{n}X_{n}(p))\big). $
	\end{enumerate}
\end{lemma}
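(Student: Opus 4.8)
The plan is to read off both formulas directly from the definition of $(\overline{g},\overline{J})$ as the transport of Dombrowski's Kähler structure on $TM$ through the trivialization $f$, using the explicit form of $f_{*}$ supplied by Lemma \ref{jfkjdwkjdkjk}. Since $f$ is by construction a Kähler isomorphism, we have $\overline{g}=f^{*}g$ and $\overline{J}=(f_{*})^{-1}\circ J\circ f_{*}$; and from Section \ref{nkwwnkwnknknfkn}, under the connector identification $T_{w}(TM)\cong T_{q}M\oplus T_{q}M$ at a point $w\in T_{q}M$, the Dombrowski metric and complex structure read $g_{w}\big((a,b),(a',b')\big)=h_{q}(a,a')+h_{q}(b,b')$ and $J_{w}(a,b)=(-b,a)$. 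So the whole proof reduces to substitution.

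For part (1): fix $(u,p)$ and tangent vectors $(v,A),(w,B)$ in $T_{u}\mathbb{R}^{n}\oplus T_{p}M$. Lemma \ref{jfkjdwkjdkjk} gives $f_{*_{(u,p)}}(v,A)=\big(A,\sum_{i}v_{i}X_{i}(p)\big)$ and likewise for $(w,B)$, both viewed in $T_{p}M\oplus T_{p}M$ at the point $f(u,p)\in T_{p}M$. Feeding these into the formula for $g$ and expanding $h_{p}$ bilinearly immediately produces $h_{p}(A,B)+\sum_{i,j}v_{i}w_{j}h_{p}(X_{i},X_{j})$, which is the asserted identity.

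For part (2): apply $f_{*_{(u,p)}}$ to $\big(v,\sum_{k}w_{k}X_{k}(p)\big)$; by Lemma \ref{jfkjdwkjdkjk} (with $M$-component $\sum_{k}w_{k}X_{k}(p)$) this equals $\big(\sum_{k}w_{k}X_{k}(p),\sum_{i}v_{i}X_{i}(p)\big)$. Applying $J$ gives $\big(-\sum_{i}v_{i}X_{i}(p),\sum_{k}w_{k}X_{k}(p)\big)$, and then one inverts $f_{*_{(u,p)}}$: since $f_{*_{(u,p)}}(v',A')=\big(A',\sum_{i}v'_{i}X_{i}(p)\big)$ and $\{X_{1}(p),\dots,X_{n}(p)\}$ is a basis of $T_{p}M$, the preimage of that vector is $\big(w,-\sum_{i}v_{i}X_{i}(p)\big)$. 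This is precisely formula (2).

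There is no real obstacle here; the only thing demanding care is bookkeeping of the conventions — namely that $f_{*}$ interchanges the two $T_{p}M$-slots of the connector decomposition (the $\mathbb{R}^{n}$-direction landing in the vertical/connector slot and the $M$-direction in the horizontal/projection slot), and that the identification at $f(u,p)$ is the one induced by the connector $K$ of $\nabla$. Invertibility of $f_{*_{(u,p)}}$, needed in part (2), is immediate from the formula of Lemma \ref{jfkjdwkjdkjk} once one recalls that $X=(X_{1},\dots,X_{n})$ is a frame. One could additionally sanity-check part (2) by verifying $\overline{J}^{2}=-\mathrm{Id}$ on the displayed vectors, but this follows automatically from $\overline{J}=(f_{*})^{-1}\circ J\circ f_{*}$.
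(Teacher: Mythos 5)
Your proposal is correct and follows exactly the route the paper intends: the paper's proof is the one-line remark ``by inspection of Dombrowski's construction together with Lemma~\ref{jfkjdwkjdkjk},'' and your argument simply spells out that inspection (pulling back $g_{u}((a,b),(a',b'))=h(a,a')+h(b,b')$ and $J_{u}(a,b)=(-b,a)$ through the derivative formula of Lemma~\ref{jfkjdwkjdkjk}). The bookkeeping of the slot interchange and the inversion of $f_{*_{(u,p)}}$ are both handled correctly.
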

\begin{proof}
	By inspection of Dombrowski's construction together with Lemma \ref{jfkjdwkjdkjk}. 
\end{proof}

	Let $q$ denote the quotient map $\mathbb{R}^{n}\to \mathbb{T}^{n}=\mathbb{R}^{n}/\mathbb{Z}^{n}$. Consider the diagram 
\begin{eqnarray*}
	\begin{tikzcd}
		\mathbb{R}^{n}\times M \arrow{r}{\displaystyle f} \arrow[swap]{d}{ q\times Id } & TM \arrow{d}{\displaystyle q_{L}} \\
		\mathbb{T}^{n}\times M \arrow[swap]{r}{\displaystyle \tilde{f} } & M_{L}
	\end{tikzcd}
\end{eqnarray*}
	where $(q\times Id)(u,p)=(q(u),p)$ and $\tilde{f}$ is the unique diffeomorphism that makes the diagram commutative. 
	The following result is immediate.	

\begin{lemma}\label{nkedwknknkdnk}
	Suppose $\mathbb{R}^{n}\times M$ (resp. $\mathbb{T}^{n}\times M$) 
	is endowed with the unique K\"{a}hler structure that makes $f$ (resp. $\tilde{f}$) a K\"{a}hler isomorphism. 
	Then, 
	\begin{enumerate}[(1)]
		\item $q\times Id$ is a holomorphic and locally isometric covering map.  
		\item $\tilde{f}$ is equivariant: for every $a\in \mathbb{T}^{n}$, 
			$\tilde{f}\circ \Psi_{a}=\Phi_{a}\circ \tilde{f},$ where $\Psi$ 
			is the left action of $\mathbb{T}^{n}$ on $\mathbb{T}^{n}\times M$ defined by $\Psi(a,(b,p))=(ab,p)$. 
	\end{enumerate}
\end{lemma}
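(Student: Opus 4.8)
The plan is to extract an explicit formula for $\tilde{f}$ from the commutativity of the diagram, and then read off both assertions from it, using the fact (recorded just above the diagram) that $q_{L}:TM\to M_{L}$ is a K\"{a}hler covering map. Since $q\times \mathrm{Id}$ is surjective, the relation $\tilde{f}\circ(q\times \mathrm{Id})=q_{L}\circ f$ forces, for every $([u],p)\in \mathbb{T}^{n}\times M$,
\[
	\tilde{f}([u],p)=q_{L}\big(f(u,p)\big)=q_{L}\big(u_{1}X_{1}(p)+\cdots+u_{n}X_{n}(p)\big).
\]
This is consistent with $\tilde{f}$ being well defined: replacing $u$ by $u+k$, $k\in \mathbb{Z}^{n}$, sends $f(u,p)=\sum_{i}u_{i}X_{i}(p)$ to $\sum_{i}u_{i}X_{i}(p)+\sum_{i}k_{i}X_{i}(p)=\gamma_{k}\big(f(u,p)\big)$ for the Deck transformation $\gamma_{k}\in \Gamma(L)$, and $q_{L}\circ \gamma_{k}=q_{L}$.

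For part $(1)$ I would separate the covering property from the K\"{a}hler property. The covering property is immediate: $q:\mathbb{R}^{n}\to \mathbb{T}^{n}$ is a covering map and $\mathrm{Id}:M\to M$ is trivially one, so their product $q\times \mathrm{Id}$ is a covering map. For the holomorphic and locally isometric part I would use the factorization $q\times \mathrm{Id}=\tilde{f}^{-1}\circ q_{L}\circ f$ obtained from commutativity. By construction, the K\"{a}hler structures on $\mathbb{R}^{n}\times M$ and $\mathbb{T}^{n}\times M$ are precisely those making $f$ and $\tilde{f}$ K\"{a}hler isomorphisms, hence holomorphic isometries; and $q_{L}$ is a holomorphic local isometry because the K\"{a}hler structure on $M_{L}$ is, by definition, the one turning $q_{L}$ into a K\"{a}hler covering map. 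A composition of holomorphic local isometries is again a holomorphic local isometry, so $q\times \mathrm{Id}$ is one.

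For part $(2)$ I would simply evaluate both sides of $\tilde{f}\circ \Psi_{a}=\Phi_{a}\circ \tilde{f}$ at an arbitrary point, where $\Phi=\Phi_{X}$. Writing $a=[s]$ and using $\Psi_{[s]}([u],p)=([s+u],p)$ together with the formula for $\tilde{f}$, the left-hand side is $q_{L}\big(\sum_{i}(s_{i}+u_{i})X_{i}(p)\big)$. For the right-hand side, put $w=\sum_{i}u_{i}X_{i}(p)\in T_{p}M$, so that $\pi(w)=p$ and $\tilde{f}([u],p)=q_{L}(w)$; the definition \eqref{nkdnknewknek} of $\Phi_{X}$ then gives $\Phi_{[s]}(q_{L}(w))=q_{L}\big(w+\sum_{i}s_{i}X_{i}(p)\big)=q_{L}\big(\sum_{i}(u_{i}+s_{i})X_{i}(p)\big)$. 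The two expressions coincide, which is the claimed equivariance.

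Since every ingredient is already in place, I expect no genuine obstacle; the only care needed is bookkeeping, namely keeping straight that the two $\mathbb{Z}^{n}$-actions playing the role of Deck group — integer translation of $u$ on $\mathbb{R}^{n}\times M$ and the lattice $\Gamma(L)$ on $TM$ — correspond to one another under the diffeomorphism $f$, which is exactly what makes the displayed formula for $\tilde{f}$, and hence the whole argument, consistent.
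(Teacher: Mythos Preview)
Your proposal is correct and is exactly the straightforward verification the paper has in mind: the paper declares this lemma ``immediate'' and gives no further argument, so your unwinding of the formula for $\tilde{f}$, the factorization $q\times\mathrm{Id}=\tilde{f}^{-1}\circ q_{L}\circ f$, and the direct check of equivariance against \eqref{nkdnknewknek} are precisely what that word is standing in for.
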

%

	Let $g'=\tilde{f}^{*}g$ denote the K\"{a}hler metric on $\mathbb{T}^{n}\times M$, 
	and let $\textup{Aut}(\mathbb{T}^{n}\times M,g')^{\mathbb{T}^{n}}$ be the group of holomorphic and isometric diffeomorphisms 
	$F$ of $\mathbb{T}^{n}\times M$ that are equivariant in the following sense: for each $F\in \textup{Aut}(\mathbb{T}^{n}\times M)^{\mathbb{T}^{n}}$, 
	there is a Lie group isomorphism $\rho:\mathbb{T}^{n}\to \mathbb{T}^{n}$ such that 
	\begin{eqnarray*}	
		F\circ \Psi_{a}=\Psi_{\rho(a)}\circ F
	\end{eqnarray*}
	for all $a\in \mathbb{T}^{n}$, where $\Psi$ is the action described in Lemma \ref{nkedwknknkdnk}. 
	Because $\tilde{f}$ is an equivariant K\"{a}hler isomorphism, 
	the map 
	\begin{eqnarray}\label{vneknwknek}
		\textup{Aut}(\mathbb{T}^{n}\times M,g')^{\mathbb{T}^{n}}\to \textup{Aut}(M_{L},g)^{\mathbb{T}^{n}},\,\,\,\,
		F\mapsto \tilde{f}\circ F\circ (\tilde{f})^{-1}
	\end{eqnarray}
	is a group isomorphism, and so we can work with either $\textup{Aut}(\mathbb{T}^{n}\times M,g')^{\mathbb{T}^{n}}$ or $\textup{Aut}(M_{L},g)^{\mathbb{T}^{n}}$, 
	whichever is more convenient. 
	
	Let $F\in \textup{Aut}(\mathbb{T}^{n}\times M,g')^{\mathbb{T}^{n}}$ be fixed. Since $F$ is equivariant, there is a smooth 
	function $\phi:M\to \mathbb{T}^{n}$, a smooth diffeomorphism 
	$\psi:M\to M$ and a Lie group isomorphism $\rho:\mathbb{T}^{n}\to \mathbb{T}^{n}$ such that 
	\begin{eqnarray*}
		F(a,p)=\big(\rho(a)\phi(p), \psi(p)\big)
	\end{eqnarray*}
	for all $a\in \mathbb{T}^{n}$ and all $p\in M$. Moreover, the fact that $\rho$ is a Lie group isomorphism of the torus implies that 
	there is $A\in \textup{GL}(n,\mathbb{Z})$ such that $\rho=\rho_{A}$.

\begin{lemma}\label{nkwdnkenknk}
	The equivariant diffeomorphism $F(a,p)=\big(\rho(a)\phi(p), \psi(p)\big)$
	is holomorphic if and only if the following conditions are satisfied:
	\begin{enumerate}[(1)]
	\item $\phi:M\to \mathbb{T}^{n}$ is constant,
	\item $\psi$ preserves $L$ and $R(\psi)=A$. 
	\end{enumerate}
\end{lemma}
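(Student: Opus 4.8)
The plan is to test holomorphicity of $F$ directly against the explicit complex structure $J'$ of $(\mathbb{T}^{n}\times M,g')$ coming from Lemma \ref{cnknksnkcnsknk} and Lemma \ref{nkedwknknkdnk}; the metric $g'$ itself plays no role here. Identifying $T_{(a,p)}(\mathbb{T}^{n}\times M)$ with $\mathbb{R}^{n}\oplus T_{p}M$, where the first summand is $T_{a}\mathbb{T}^{n}$ trivialised by left translation to the identity, Lemma \ref{cnknksnkcnsknk}(2) descends along the local holomorphic covering $q\times\mathrm{Id}$ of Lemma \ref{nkedwknknkdnk}(1) to
\[
	J'_{(a,p)}\Big(v,\textstyle\sum_{k=1}^{n}w_{k}X_{k}(p)\Big)=\Big(w,-\textstyle\sum_{k=1}^{n}v_{k}X_{k}(p)\Big),
\]
for $v=(v_{1},\dots,v_{n})$, $w=(w_{1},\dots,w_{n})\in\mathbb{R}^{n}$; in particular $J'_{(a,p)}$ depends only on $p$. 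Recall also that, by the discussion preceding the lemma, $F(a,p)=(\rho_{A}(a)\phi(p),\psi(p))$ for some $A\in\mathrm{GL}(n,\mathbb{Z})$.

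First I would differentiate $F$. Since $\mathbb{T}^{n}$ is abelian, $\rho_{A}(a\,q(tv))=\rho_{A}(a)\,q(tAv)$, and a short computation with the left-translation trivialisation of $T\mathbb{T}^{n}$ gives
\[
	F_{*_{(a,p)}}(v,\xi)=\big(Av+\phi_{*_{p}}\xi,\ \psi_{*_{p}}\xi\big),\qquad v\in\mathbb{R}^{n},\ \xi\in T_{p}M,
\]
where $\phi_{*_{p}}$ is read as a linear map $T_{p}M\to\mathbb{R}^{n}\cong T_{\phi(p)}\mathbb{T}^{n}$.

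Next, for each $p\in M$ introduce the linear maps $R_{p},\beta_{p}\colon\mathbb{R}^{n}\to\mathbb{R}^{n}$ determined by $\psi_{*_{p}}\big(\sum_{k}v_{k}X_{k}(p)\big)=\sum_{k}(R_{p}v)_{k}X_{k}(\psi(p))$ and $\beta_{p}v=\phi_{*_{p}}\big(\sum_{k}v_{k}X_{k}(p)\big)$; this is legitimate because $\{X_{k}(\psi(p))\}_{k}$ is a basis of $T_{\psi(p)}M$. Plugging $\xi=\sum_{k}w_{k}X_{k}(p)$ into $F_{*}J'(v,\xi)$ and into $J'F_{*}(v,\xi)$ — using that $J'$ at $F(a,p)$ depends only on $\psi(p)$ — and expanding everything in the frame $X$, both sides become elements of $\mathbb{R}^{n}\oplus T_{\psi(p)}M$; comparing the $\mathbb{R}^{n}$- and $T_{\psi(p)}M$-components shows that $F$ is holomorphic at $(a,p)$ if and only if
\[
	R_{p}w=Aw-\beta_{p}v\qquad\text{and}\qquad R_{p}v=Av+\beta_{p}w\qquad\text{for all }v,w\in\mathbb{R}^{n}.
\]
Setting $v=0$ (resp. $w=0$) forces $R_{p}=A$ on all of $\mathbb{R}^{n}$, and then the first identity gives $\beta_{p}=0$; conversely $R_{p}=A$ and $\beta_{p}=0$ obviously imply both identities. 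Hence $F$ is holomorphic if and only if $R_{p}=A$ and $\beta_{p}=0$ for every $p\in M$.

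Finally I would translate these pointwise conditions. Since $A\in\mathrm{GL}(n,\mathbb{Z})$, the condition $R_{p}=A$ for all $p$ is, by Lemma \ref{nfekndkwenknfk}, exactly the statement that $\psi$ preserves $L$ with associated matrix $R(\psi)=A$. The condition $\beta_{p}=0$ for all $p$ is equivalent to $\phi_{*_{p}}=0$ for all $p$ (because $\{X_{k}(p)\}_{k}$ spans $T_{p}M$), i.e. to $\phi$ being locally constant, hence constant as $M$ is connected. This gives both implications. The only real work is bookkeeping — getting $F_{*}$ right (the abelian-group translation together with the $T\mathbb{T}^{n}$ identification) and keeping the frame expansions anchored at the correct base point, $p$ versus $\psi(p)$; once that is in place, separating the independent variables $v$ and $w$ does the rest.
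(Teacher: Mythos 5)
Your proposal is correct and follows essentially the same route as the paper: compute $F_{*}$ explicitly, compare $F_{*}J$ with $JF_{*}$ using the frame $X$ and the explicit complex structure from Lemma \ref{cnknksnkcnsknk}, and separate the resulting conditions into $\psi_{*}X_{k}(p)=\sum_{i}A_{ik}X_{i}(\psi(p))$ and $\phi_{*_{p}}=0$. The only differences are presentational (you work directly on $\mathbb{T}^{n}\times M$ via left translation rather than lifting through $q\times\mathrm{Id}$ with a local lift $\tilde{\phi}$, and you package the system in the maps $R_{p},\beta_{p}$ before setting $v=0$ and $w=0$), and your identification of $J'$ and your derivative formula both agree with the paper's \eqref{nwnwknekfnkwfnkn}.
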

\begin{proof}
	Let $J$ be the complex structure on $\mathbb{T}^{n}\times M$. Let $(q(u),p)\in \mathbb{T}^{n}\times M$ be arbitrary, where 
	$u\in \mathbb{R}^{n}$. Since $q:\mathbb{R}^{n}\to \mathbb{T}^{n}$ is a covering map, there exist an open neighborhood 
	$U$ of $p$ in $M$ and a smooth map $\tilde{\phi}:U\to \mathbb{R}^{n}$ such that $\phi=q\circ \tilde{\phi}$ on $U$. 
	From this and the fact that $q$ is a Lie group homomorphism 
	it is easy to compute the derivative of $F$ at $(q(u),p)$ in the direction $(q_{*_{u}}v,Z)$: 	
	\begin{eqnarray*}
		F_{*_{(q(u),p)}}(q_{*_{u}}v,Z)&=&\dfrac{d}{dt}\bigg\vert_{0} F(q(u+tv),p(t))\\
		&=&\dfrac{d}{dt}\bigg\vert_{0} \big(\rho(q(u+tv))q(\tilde{\phi}(p(t))),\psi(p(t))\big)\\
		&=&\dfrac{d}{dt}\bigg\vert_{0} \big( q(Au+tAv+\tilde{\phi}(p(t))), \psi(p(t)) \big) \,\,\,\,\,\,\,\,\,\,\,\,\,\,(\rho=\rho_{A})\\
		&=& (q\times Id)_{*_{(Au+\tilde{\phi}(p), \psi(p))}}\big(Av+\tilde{\phi}_{*_{p}}Z,\psi_{*_{p}}Z\big),
	\end{eqnarray*}
	where $p(t)$ is a smooth curve in $M$ such that $p(0)=p$ and $\tfrac{dp(t)}{dt}(0)=Z$. 
	Thus 
        \begin{eqnarray}\label{nwnwknekfnkwfnkn}
		F_{*_{(q(u),p)}}(q_{*_{u}}v,Z)=(q\times Id)_{*}\big(Av+\tilde{\phi}_{*_{p}}Z,\psi_{*_{p}}Z\big). 
	\end{eqnarray}
	It follows from this, Lemma \ref{cnknksnkcnsknk} and the fact that $q\times Id$ is holomorphic that 
	\begin{eqnarray}
		\lefteqn{J F_{*_{(q(u),p)}}(q_{*_{u}}v,Z) = J(q\times Id)_{*}\big(Av+\tilde{\phi}_{*_{p}}Z,\psi_{*_{p}}Z\big)}\nonumber\\
		&=& (q\times Id)_{*}\overline{J}(Av+\tilde{\phi}_{*_{p}}Z,\psi_{*_{p}}Z) \nonumber \\
		&=&(q\times Id)_{*} \bigg(\big((\psi_{*_{p}}Z)_{1},...,(\psi_{*_{p}}Z)_{n}\big), 
			-\sum_{k=1}^{n}(Av+\tilde{\phi}_{*_{p}}Z)_{k}X_{k}(\psi(p))\bigg),\label{nfekndkwnk}
	\end{eqnarray}
	where $(\psi_{*_{p}}Z)_{1},...,(\psi_{*_{p}}Z)_{n}$ are the coordinates of $\psi_{*_{p}}Z$ with respect to the basis 
	$X_{1}(\psi(p)),...,X_{k}(\psi(p))$. On the other hand, 
	\begin{eqnarray}
		F_{*_{(q(u),p)}}J(q_{*_{u}}v,Z)&=&F_{*_{(q(u),p)}}J(q\times Id)_{*}(v,Z)\nonumber \\
		&=&F_{*_{(q(u),p)}}(q\times Id)_{*}\overline{J}(v,Z)\nonumber \\
		&=&F_{*_{(q(u),p)}}(q\times Id)_{*} \big((Z_{1},...,Z_{n}),-V\big) \nonumber \\
		&=& F_{*_{(q(u),p)}}\big(q_{*_{u}}(Z_{1},...,Z_{n}),-V\big) \nonumber \\
		&=& (q\times Id)_{*} \big(A(Z_{1},...,Z_{n})-\tilde{\phi}_{*_{p}}V, -\psi_{*_{p}}V\big), \,\,\,\,\,\,\,\,\,\,(\textup{see}\,\, 
			\eqref{nwnwknekfnkwfnkn})\label{nfekwdnkednk}
	\end{eqnarray}
	where $V=\sum_{k}v_{k}X_{k}(p)$ and $Z_{1},...,Z_{n}$ are the coordinates of $Z$ with respect to the basis $X_{1}(p),...,
	X_{n}(p)$. Comparing \eqref{nfekndkwnk} with \eqref{nfekwdnkednk} we see that $F_{*}$ and $J$ commute at $(q(u),p)$ if and only if
	\begin{eqnarray*}
		(S)\,\,\,\,
	\left \lbrace
		\begin{array}{lll}
			\big((\psi_{*_{p}}Z)_{1},...,(\psi_{*_{p}}Z)_{n}\big)       & = &A(Z_{1},...,Z_{n})-\tilde{\phi}_{*_{p}}V, \\[0.5em]
			\displaystyle\sum_{k=1}^{n}(Av+\tilde{\phi}_{*_{p}}Z)_{k}X_{k}(\psi(p)) & = &\psi_{*_{p}}V
		\end{array}
	\right.
	\end{eqnarray*}
	for all $Z\in T_{p}M$ and all $v=(v_{1},...,v_{n})\in \mathbb{R}^{n}$. By inspection of $(S)$ we deduce that $F$ is holomorphic 
	at $(q(u),p)$ if and only if $\tilde{\phi}_{*_{p}}=0$ ($\Leftrightarrow\,\phi_{*_{p}}=0$) and 
	$\psi_{*_{p}}X_{k}(p)=\sum_{i=1}^{n}A_{ik}X_{i}(\psi(p))$ for all $k=1,...,n$. Since $M$ is connected, the condition $\phi_{*_{p}}=0$ 
	for all $p\in M$ is equivalent to $\phi$ being constant. The other condition means that $\psi$ preserves $L$ and that 
	$R(\psi)=A$. The lemma follows.  
\end{proof}

\begin{lemma}\label{nfeknwdkenkfn}
	Let $F(a,p)=\big(\rho(a)\phi(p), \psi(p)\big)$ be as in the preceding lemma. 
	Suppose $F$ holomorphic. Then $F$ is isometric if and only if $\psi^{*}h=h$. 
\end{lemma}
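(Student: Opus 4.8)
The plan is to use the explicit formula for the K\"{a}hler metric $\overline{g}$ on $\mathbb{R}^{n}\times M$ (Lemma \ref{cnknksnkcnsknk}(1)), push it through the covering map $q\times Id$ to get the metric $g'$ on $\mathbb{T}^{n}\times M$, and then compute directly when $F$ is an isometry using the derivative formula \eqref{nwnwknekfnkwfnkn} established in the proof of Lemma \ref{nkwdnkenknk}. Since $F$ is assumed holomorphic, Lemma \ref{nkwdnkenknk} already tells us that $\phi$ is constant and $R(\psi)=A$, so \eqref{nwnwknekfnkwfnkn} simplifies to $F_{*_{(q(u),p)}}(q_{*_{u}}v,Z)=(q\times Id)_{*}(Av,\psi_{*_{p}}Z)$. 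The metric $g'$ on $\mathbb{T}^{n}\times M$ is locally pulled back from $\overline{g}$ via $q\times Id$, so at $(q(u),p)$ we have $g'_{(q(u),p)}\big((q_{*_{u}}v,Z),(q_{*_{u}}v',Z')\big)=\sum_{i,j}v_{i}v'_{j}h_{p}(X_{i},X_{j})+h_{p}(Z,Z')$.

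First I would write out the isometry condition $g'_{F(q(u),p)}\big(F_{*}(q_{*_{u}}v,Z),F_{*}(q_{*_{u}}v',Z')\big)=g'_{(q(u),p)}\big((q_{*_{u}}v,Z),(q_{*_{u}}v',Z')\big)$ for all $v,v'\in\mathbb{R}^{n}$ and $Z,Z'\in T_{p}M$. Using the simplified derivative and the formula for $g'$ (noting that the point $F(q(u),p)=(q(Au+c),\psi(p))$ for the constant $c$ representing $\phi$, at which the metric $g'$ is computed using the frame $X_{i}(\psi(p))$), the left-hand side becomes $\sum_{i,j}(Av)_{i}(Av')_{j}h_{\psi(p)}(X_{i},X_{j})+h_{\psi(p)}(\psi_{*_{p}}Z,\psi_{*_{p}}Z')$. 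Taking $Z=Z'=0$ shows the isometry condition restricted to the torus directions reads $\sum_{i,j}(Av)_{i}(Av')_{j}h_{\psi(p)}(X_{i},X_{j})=\sum_{i,j}v_{i}v'_{j}h_{p}(X_{i},X_{j})$ for all $v,v'$; taking $v=v'=0$ gives $h_{\psi(p)}(\psi_{*_{p}}Z,\psi_{*_{p}}Z')=h_{p}(Z,Z')$ for all $Z,Z'$, which is precisely $\psi^{*}h=h$; and the mixed terms vanish automatically on both sides since the metric $\overline{g}$ is block-diagonal. So the isometry condition is equivalent to the conjunction of $\psi^{*}h=h$ and the torus-direction identity.

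The one thing to check — and this is the only real content — is that the torus-direction identity is a \emph{consequence} of $\psi^{*}h=h$ together with the facts already in force ($R(\psi)=A$, $X_i$ parallel). Indeed, by Lemma \ref{nfekndkwenknfk} with $R(\psi)=A=(A_{ik})$ we have $\psi_{*_{p}}X_{k}(p)=\sum_{i}A_{ik}X_{i}(\psi(p))$, so if $\psi^{*}h=h$ then $h_{p}(X_{k},X_{\ell})=h_{\psi(p)}(\psi_{*_{p}}X_{k},\psi_{*_{p}}X_{\ell})=\sum_{i,j}A_{ik}A_{j\ell}h_{\psi(p)}(X_{i},X_{j})$; substituting $v=e_{k}$, $v'=e_{\ell}$ this is exactly the required identity. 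Conversely the torus-direction identity with $v=e_k, v'=e_\ell$ gives $\sum_{i,j}A_{ik}A_{j\ell}h_{\psi(p)}(X_i,X_j)=h_p(X_k,X_\ell)$, i.e. $h_{\psi(p)}(\psi_{*_p}X_k,\psi_{*_p}X_\ell)=h_p(X_k,X_\ell)$, and since the $X_k(p)$ form a basis of $T_pM$ this is $\psi^{*}h=h$ in full. Hence the isometry condition collapses to the single condition $\psi^{*}h=h$, which is the claim.

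I do not expect a serious obstacle here; the proof is essentially bookkeeping with the block-diagonal metric formula. The only point requiring a modicum of care is making sure the base point $F(q(u),p)$ at which one evaluates $g'$ on the image side carries the frame $X_i(\psi(p))$ (not $X_i(p)$), so that the appearance of $h_{\psi(p)}$ rather than $h_p$ is correctly tracked; once that is handled the equivalence with $\psi^{*}h=h$ is immediate.
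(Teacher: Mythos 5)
Your proposal is correct and follows essentially the same route as the paper's proof: both compute $F^{*}g'$ via the derivative formula \eqref{nwnwknekfnkwfnkn} (simplified by $\phi$ constant), pull the block-diagonal metric $\overline{g}$ through the local isometry $q\times Id$, and use the holomorphicity relation $\psi_{*_{p}}X_{k}(p)=\sum_{i}A_{ik}X_{i}(\psi(p))$ to convert the torus-block condition into $(\psi^{*}h)_{p}$ evaluated on the frame, so that the whole isometry condition collapses to $\psi^{*}h=h$. Your explicit separation into the $Z=Z'=0$ and $v=v'=0$ cases is only an organizational variant of the paper's single comparison of the two quadratic forms.
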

\begin{proof}
	Recall that $\overline{g}$ (resp. $g'$) denotes the Riemannian metric on $\mathbb{R}^{n}\times M$ 
	(resp. $\mathbb{T}^{n}\times M$). We must show that $F^{*}g'=g'$ if and only if $\psi^{*}h=h$. 
	
	In the proof of Lemma \ref{nkwdnkenknk}, we have computed the derivative of $F$ at 
	$(q(u),p)\in \mathbb{T}^{n}\times M$ in the direction $(q_{*_{u}}v,Z)\in T_{q(u)}\mathbb{T}^{n}\times T_{p}M$ 
	(see \eqref{nwnwknekfnkwfnkn}). Taking into account the fact that $\phi$ is constant by Lemma \ref{nkwdnkenknk}, 
	this formula becomes 
	\begin{eqnarray*}
		F_{*_{(q(u),p)}}(q_{*_{u}}v,Z)=(q\times Id)_{*_{(Au+\tilde{\phi}(p), \psi(p))}}
		\big(Av,\psi_{*_{p}}Z\big),
	\end{eqnarray*}
	where 	
	$\tilde{\phi}$ is a smooth function defined on some neighborhood of $p$ in $M$ such that $\phi=q\circ \tilde{\phi}$. 
	
	We now compute $F^{*}g'$ at $(q(u),p)\in \mathbb{T}^{n}\times M$. Given two pairs 
	$(q_{*_{u}}v,V)$ and $(q_{*_{u}}w,W)$ in $T_{q(u)}\mathbb{T}^{n}\times T_{p}M$, we have 

	\begin{eqnarray*}
		\lefteqn{(F^{*}g')_{(q(u),p)}\big((q_{*_{u}}v,V), (q_{*_{u}}w,W)\big)}\\
		&=&g'_{F(q(u),p)}\big(F_{*}(q_{*_{u}}v,V), F_{*}(q_{*_{u}}w,W)\big)\\
		&=&g'_{F(q(u),p)}\Big( (q\times Id)_{*}(Av, \psi_{*}V), 
			(q\times Id)_{*}(Aw, \psi_{*}W)\Big)\\
		&=& \big((q\times Id)^{*}g'\big)\big((Av, \psi_{*}V), 
			(Aw, \psi_{*}W)\big)\\
		&=& \overline{g}\big((Av, \psi_{*}V), (Aw, \psi_{*}W)\big)\\
		&=& \sum_{i,j=1}^{n} (Av)_{i} (Aw)_{j}h_{\psi(p)}(X_{i}, X_{j}) + h_{\psi(p)}(\psi_{*}V,\psi_{*}W)\\
		&=& \sum_{i,j=1}^{n}\sum_{k,l=1}^{n} A_{ik}v_{k}A_{jl}w_{l} h_{\psi(p)}(X_{i}, X_{j}) + (\psi^{*}h)_{p}(Z,W), 
	\end{eqnarray*}
	where we have used Lemma \ref{cnknksnkcnsknk} and the fact that $q\times Id$ is a local isometry by Lemma \ref{nkedwknknkdnk}. 
	Because $F$ is holomorphic, $\psi_{*_{p}}X_{k}(p)=\sum_{i=1}^{n}A_{ik}X_{i}(\psi(p))$ for every $k=1,...,n$ and hence the 
	double sum above can be rewritten as 
	\begin{eqnarray*}
		\lefteqn{\sum_{i,j=1}^{n}\sum_{k,l=1}^{n} A_{ik}v_{k}A_{jl}w_{l} h_{\psi(p)}(X_{i}, X_{j})}\\
		&=&\sum_{k,l=1}^{n} v_{k}w_{l}h_{\psi(p)}\big(\psi_{*_{p}}X_{i}, \psi_{*_{p}}X_{j}\big)  
			= \sum_{k,l=1}^{n} v_{k}w_{l} (\psi^{*}h)_{p}(X_{i},X_{j}). 
	\end{eqnarray*}
	It follows that 
	\begin{eqnarray*}
		\lefteqn{(F^{*}g')_{(q(u),p)}\big((q_{*_{u}}v,V), (q_{*_{u}}w,W)\big)}\\
		&=&\sum_{k,l=1}^{n} v_{k}w_{l} (\psi^{*}h)_{p}(X_{i},X_{j}) + (\psi^{*}h)_{p}(V,W).
	\end{eqnarray*}
	Comparing with Lemma \ref{cnknksnkcnsknk} and using the fact that $(q\times Id)^{*}g'=\overline{g}$, 
	we see that $(F^{*}g')_{(q(u),p)}= g'_{(q(u),p)}$ if and only if 
	\begin{eqnarray*}
		\sum_{k,l=1}^{n} v_{k}w_{l} (\psi^{*}h)_{p}(X_{i},X_{j}) + (\psi^{*}h)_{p}(V,W)=
		\sum_{k,l=1}^{n} v_{k}w_{l} h_{p}(X_{i},X_{j}) + h_{p}(V,W)
	\end{eqnarray*}
	for all $v,w\in \mathbb{R}^{n}$ and all $V,W\in T_{p}M$, which is equivalent to $(\psi^{*}h)_{p}=h_{p}$. The lemma follows.  
\end{proof}

%

\begin{proof}[Proof of Proposition \ref{nfekwwndkefnk}]
	Let $\widetilde{\Gamma}$ denote the group action of $\mathbb{T}^{n}\rtimes \textup{Diff}(M,h,L)$ on $\mathbb{T}^{n}\times M$ 
	defined by 
	\begin{eqnarray}
		\widetilde{\Gamma}\big((a,\psi),(b,p)\big)=\big(\rho_{R(\psi)}(b)a, \psi(p)\big).
	\end{eqnarray}
	A simple computation shows that each $\widetilde{\Gamma}_{(a,\psi)}$ is equivariant in the sense that 
	$\widetilde{\Gamma}_{(a,\psi)}\circ \Psi_{b}= \Psi_{\rho_{R(\psi)}(b)}\circ \widetilde{\Gamma}_{(a,\psi)}$ for all 
	$b\in \mathbb{T}^{n}$, and by the two lemmas above, $\widetilde{\Gamma}_{(a,\psi)}$ is also holomorphic and isometric. 
	Therefore the map 
	\begin{center}
		\begin{tabular}{lrlll}
			$\Omega\,:$ &   $\mathbb{T}^{n}\rtimes \textup{Diff}(M,h,L)$ & 
			   $\to$    &$\textup{Aut}(\mathbb{T}^{n}\times M,g')^{\mathbb{T}^{n}},$ \\[0.5em]
			    &   $g$              & $\mapsto$ & 
			    $\widetilde{\Gamma}_{g}$,
		\end{tabular}
	\end{center}
	is a well-defined group homomorphism. Again by the two lemmas above, $\Omega$ is surjective, and it is straightforward to 
	check that $\Omega$ is injective. Therefore $\Omega$ is a group isomorphism. The rest of the proof consists in 
	using the group isomorphism $\textup{Aut}(\mathbb{T}^{n}\times M,g')^{\mathbb{T}^{n}}\to \textup{Aut}(M_{L},g)^{\mathbb{T}^{n}}$ 
	defined in \eqref{vneknwknek} and to check that 
	\begin{eqnarray*}
		\tilde{f}\circ \widetilde{\Gamma}_{(a,\psi)}\circ (\tilde{f})^{-1}=\Gamma_{(a,\psi)}
	\end{eqnarray*}
	for all $(a,\psi)\in \mathbb{T}^{n}\rtimes \textup{Diff}(M,h,L)$. The details are left to the reader. 
	This concludes the proof of Proposition \ref{nfekwwndkefnk}. 
\end{proof}

\section{Equivariant holomorphic isometries of a torification}\label{nekneknfknknf}

	

	Let $(M,h,\nabla)$ be a connected dually flat manifold
	and $\Phi:\mathbb{T}^{n}\times N\to N$ a regular torification, with K\"{a}hler metric $g$ and fundamental 
	lattice $\mathscr{L}\subset TM$ (see Section \ref{nfeknkfnekndk}). 
	Given a compatible covering map $\tau:TM\to N^{\circ}$ and $\psi\in \textup{Diff}(M,h,\nabla)$,
	we will denote by 
	\begin{itemize}
	\item $\textup{lift}_{\tau}(\psi):N\to N$ 
			the unique lift of $\psi$ that satisfies $\textup{lift}_{\tau}(\psi)\circ \tau =\tau\circ \psi_{*}$ 
			on $TM$,
		\item $\rho_{\tau}(\psi):\mathbb{T}^{n}\to \mathbb{T}^{n}$ the unique Lie group homomorphism satisfying 
			$\textup{lift}_{\tau}(\psi)\circ \Phi_{a}=\Phi_{\rho_{\tau}(\psi)(a)}\circ \textup{lift}_{\tau}(\psi)$ 
			for all $a\in \mathbb{T}^{n}$. 
	\end{itemize}
	Let $\textup{Aut}(N,g)^{\mathbb{T}^{n}}$ be the group of holomorphic isometries of $(N,g)$
	that are equivariant in the following sense: for each $\varphi\in \textup{Aut}(N,g)^{\mathbb{T}^{n}}$, 
	there is a Lie group isomorphism $\rho:\mathbb{T}^{n}\to \mathbb{T}^{n}$ 
	such that $\varphi\circ \Phi_{a}=\Phi_{\rho(a)}\circ \varphi$ for all $a\in \mathbb{T}^{n}$. 
	In \cite{molitor-toric}, it is shown that the maps
	\begin{center}
		\begin{tabular}{lclllllll}
			$\textup{lift}_{\tau}$ &: &   $\textup{Diff}(M,h,\nabla)$ & 
			   $\to$    &$\textup{Aut}(N,g)^{\mathbb{T}^{n}},$ & $\psi$ & $\mapsto$ & $\textup{lift}_{\tau}(\psi),$ \\[0.5em]
			$\rho_{\tau}$ &: &   $\textup{Diff}(M,h,\nabla)$ & $\to$    &$\textup{Aut}(\mathbb{T}^{n}),$   
			    &   $\psi$ & $\mapsto$ & $\rho_{\tau}(\psi),$ 
		\end{tabular}
	\end{center}
	are group homomorphisms. 

	Below we use the notation $\rho_{A}$ to denote the Lie group isomorphism of the 
	torus $\mathbb{T}^{n}=\mathbb{R}^{n}/\mathbb{Z}^{n}$ defined by $\rho_{A}([t])=[At]$, 
	where $A\in \textup{GL}(n,\mathbb{Z})$ and $t\in \mathbb{R}^{n}$.

\begin{lemma}\label{nkwnknefknknkhhh}
	Let $(\mathscr{L},X,F)$ be a toric parametrization, with corresponding 
	toric factorization $\pi=\kappa\circ \tau$. Let $\psi\in \textup{Diff}(M,h,\nabla)$ be arbitrary. 
	If $\rho_{\tau}(\psi)=\rho_{A}$, where $A\in \textup{GL}(n,\mathbb{Z})$, then 
	\begin{eqnarray}\label{nnkefnknkfnkn}
		\psi_{*}X_{i}(p)=\sum_{j=1}^{n}A_{ji}X_{j}(\psi(p))
	\end{eqnarray}
	for all $p\in M$ and $i\in \{1,...,n\}$. In particular, $\psi$ preserves $\mathscr{L}$. 
\end{lemma}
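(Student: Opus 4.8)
The plan is to obtain \eqref{nnkefnknkfnkn} as an essentially immediate consequence of Proposition \ref{nfekwnknfkenk}, applied in the special case $M=M'$, $N=N'$, $\tau=\tau'$, with both toric parametrizations taken to be $(\mathscr{L},X,F)$. To invoke it I first check that its hypotheses are met: $\psi\in\textup{Diff}(M,h,\nabla)$ is in particular an affine isometric immersion of $M$ into itself; $m:=\textup{lift}_{\tau}(\psi)$ is, by the definition recalled at the start of the section, the unique lift of $\psi$ with respect to $\tau$; and $\rho_{\tau}(\psi)$ is, again by definition, the unique Lie group homomorphism satisfying $m\circ\Phi_a=\Phi_{\rho_{\tau}(\psi)(a)}\circ m$ for all $a\in\mathbb{T}^n$. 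Since $N$ is regular, Proposition \ref{nfekwnknfkenk} then gives
\[
\psi_{*}\circ (T_{X})_{t}=(T_{X})_{(\rho_{\tau}(\psi))_{*_e}t}\circ \psi_{*}
\]
for all $t\in\mathbb{R}^{n}\cong\textup{Lie}(\mathbb{T}^{n})$, where $(T_{X})_{t}(u)=u+\sum_{k=1}^{n}t_{k}X_{k}(\pi(u))$.

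Next I would compute $(\rho_{A})_{*_e}$. Under the identification of $\textup{Lie}(\mathbb{T}^{n})$ with $\mathbb{R}^{n}$ via the derivative at $0$ of the quotient map $\mathbb{R}^{n}\to\mathbb{T}^{n}$, the isomorphism $\rho_{A}\colon[t]\mapsto[At]$ lifts to the linear map $t\mapsto At$ on $\mathbb{R}^{n}$, so $(\rho_{A})_{*_e}$ is simply multiplication by $A$. Hence, under the assumption $\rho_{\tau}(\psi)=\rho_{A}$, evaluating the displayed identity on an arbitrary $u\in T_{p}M$ yields $\psi_{*_p}\bigl(u+\sum_{k}t_{k}X_{k}(p)\bigr)=\psi_{*_p}(u)+\sum_{k}(At)_{k}X_{k}(\psi(p))$; cancelling $\psi_{*_p}(u)$ and using linearity of $\psi_{*_p}$ gives $\sum_{k}t_{k}\,\psi_{*}X_{k}(p)=\sum_{k}(At)_{k}X_{k}(\psi(p))$ for all $t$. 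Taking $t$ to be the $i$-th standard basis vector and noting $(Ae_{i})_{k}=A_{ki}$, one reads off $\psi_{*}X_{i}(p)=\sum_{j=1}^{n}A_{ji}X_{j}(\psi(p))$, which is \eqref{nnkefnknkfnkn}. Finally, since $A\in\textup{GL}(n,\mathbb{Z})$, Lemma \ref{nfekndkwenknfk} (with $R=A$) shows that $\psi$ preserves $\mathscr{L}$.

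There is no real obstacle here: the mathematical content is entirely carried by Proposition \ref{nfekwnknfkenk}, and what remains is the routine identification $(\rho_{A})_{*_e}=A$ and an elementary coefficient comparison. The only point deserving a line of care is verifying that $\textup{lift}_{\tau}(\psi)$ and $\rho_{\tau}(\psi)$ are precisely the ``$m$'' and ``$\rho$'' of that proposition, which is immediate from the definitions. (If one prefers to avoid citing Proposition \ref{nfekwnknfkenk}, the same conclusion can be reached directly by unwinding $\textup{lift}_{\tau}(\psi)\circ\tau=\tau\circ\psi_{*}$, the factorization $\tau=F\circ q_{\mathscr{L}}$, the formula $\Phi_{X}([t],q_{\mathscr{L}}(u))=q_{\mathscr{L}}(u+\sum_{k}t_{k}X_{k})$, the intertwining $F\circ(\Phi_{X})_{a}=\Phi_{a}\circ F$, and $\textup{lift}_{\tau}(\psi)\circ\Phi_{a}=\Phi_{\rho_{A}(a)}\circ\textup{lift}_{\tau}(\psi)$, then differentiating in $t$ at $t=0$; but routing through Proposition \ref{nfekwnknfkenk} is shorter.)
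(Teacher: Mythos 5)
Your proof is correct and follows essentially the same route as the paper's: both rest entirely on Proposition \ref{nfekwnknfkenk} together with the identification $(\rho_{A})_{*_{e}}=A$, followed by evaluating the resulting identity $\psi_{*}\circ (T_{X})_{t}=(T_{X})_{At}\circ\psi_{*}$ at a tangent vector and reading off the coefficients. Your explicit appeal to Lemma \ref{nfekndkwenknfk} for the final claim that $\psi$ preserves $\mathscr{L}$ is a small but welcome addition that the paper leaves implicit.
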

\begin{proof}
	For simplicity, write $\rho=\rho_{A}$. 
	Upon the identification $\textup{Lie}(\mathbb{T}^{n})=\mathbb{R}^{n}$ given by the derivative at $0$ of the quotient 
	map $\mathbb{R}^{n}\to \mathbb{R}^{n}/\mathbb{Z}^{n}=\mathbb{T}^{n}$, 
	the derivative of $\rho$ at $e$ in the direction 
	$t\in \mathbb{R}^{n}$ is given by $\rho_{*_{e}}t=At$. It follows from this and 
	Proposition \ref{nfekwnknfkenk} that 
	\begin{eqnarray*}
		\psi_{*}\circ (T_{X})_{t}=(T_{X})_{At}\circ \psi_{*}
	\end{eqnarray*}
	for all $t\in\mathbb{R}^{n}\cong \textup{Lie}(\mathbb{T}^{n})$, where $T_{X}$ is the group action of 
	$\mathbb{R}^{n}$ on $TM$ defined by $T_{X}(t,u)=u+\sum_{k=1}^{n}t_{k}X_{k}$. Taking $t=(0,..,1,...,0)$ 
	and evaluating at $0$ yields \eqref{nnkefnknkfnkn}. 
\end{proof}
	
	An immediate consequence of Lemmas \ref{nnknefkwnknefknk} and \ref{nkwnknefknknkhhh} is the following 

\begin{proposition}\label{neknknefknkn}
	$\textup{Diff}(M,h,\nabla)=\textup{Diff}(M,h,\mathscr{L})$. 
\end{proposition}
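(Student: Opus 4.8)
The plan is to prove the two inclusions separately, each being essentially immediate from the two lemmas just cited, but I want to spell out why the hypotheses of those lemmas are met here.

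First I would establish $\textup{Diff}(M,h,\mathscr{L})\subseteq \textup{Diff}(M,h,\nabla)$. By definition $\textup{Diff}(M,h,\mathscr{L})=\textup{Diff}(M,\mathscr{L})\cap\textup{Isom}(M,h)$, and Lemma \ref{nnknefkwnknefknk} (applied with $L=\mathscr{L}$, which is a parallel lattice with respect to $\nabla$ by definition of the fundamental lattice) gives $\textup{Diff}(M,\mathscr{L})\subseteq\textup{Diff}(M,\nabla)$. Intersecting both sides with $\textup{Isom}(M,h)$ yields $\textup{Diff}(M,h,\mathscr{L})\subseteq\textup{Diff}(M,\nabla)\cap\textup{Isom}(M,h)=\textup{Diff}(M,h,\nabla)$. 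This direction requires nothing beyond Lemma \ref{nnknefkwnknefknk}.

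For the reverse inclusion $\textup{Diff}(M,h,\nabla)\subseteq\textup{Diff}(M,h,\mathscr{L})$, let $\psi\in\textup{Diff}(M,h,\nabla)$. Since $\Phi:\mathbb{T}^{n}\times N\to N$ is a regular torification of $(M,h,\nabla)$, there is a toric parametrization $(\mathscr{L},X,F)$ with induced toric factorization $\pi=\kappa\circ\tau$, and the homomorphisms $\textup{lift}_{\tau}:\textup{Diff}(M,h,\nabla)\to\textup{Aut}(N,g)^{\mathbb{T}^{n}}$ and $\rho_{\tau}:\textup{Diff}(M,h,\nabla)\to\textup{Aut}(\mathbb{T}^{n})$ recalled above are defined (this uses Theorem \ref{ncnkwknknk} for the existence and uniqueness of the lift, and the regularity of $N$). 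Because $\rho_{\tau}(\psi)\in\textup{Aut}(\mathbb{T}^{n})$ and every Lie group automorphism of $\mathbb{T}^{n}$ is of the form $\rho_{A}$ for a unique $A\in\textup{GL}(n,\mathbb{Z})$, we may write $\rho_{\tau}(\psi)=\rho_{A}$. Lemma \ref{nkwnknefknknkhhh} then applies and gives $\psi_{*}X_{i}(p)=\sum_{j=1}^{n}A_{ji}X_{j}(\psi(p))$ for all $p\in M$ and all $i$; comparing with Lemma \ref{nfekndkwenknfk} (with $R=A$) shows $\psi$ preserves $\mathscr{L}$, i.e. $\psi\in\textup{Diff}(M,\mathscr{L})$. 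Since $\psi$ is also an isometry of $(M,h)$ by hypothesis, $\psi\in\textup{Diff}(M,\mathscr{L})\cap\textup{Isom}(M,h)=\textup{Diff}(M,h,\mathscr{L})$.

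Combining the two inclusions gives the claimed equality. There is no real obstacle here: the only point that deserves care is the bookkeeping needed to invoke Lemma \ref{nkwnknefknknkhhh}, namely that a toric parametrization exists (guaranteed by the definition of a torification) so that $\textup{lift}_{\tau}(\psi)$ and the associated torus automorphism $\rho_{\tau}(\psi)$ make sense, and the rigidity fact $\textup{Aut}(\mathbb{T}^{n})\cong\textup{GL}(n,\mathbb{Z})$ that forces $\rho_{\tau}(\psi)=\rho_{A}$ with integer matrix $A$. Everything else is a direct substitution into the two lemmas.
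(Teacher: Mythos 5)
Your proof is correct and follows the same route as the paper: the paper states the proposition as an immediate consequence of Lemma \ref{nnknefkwnknefknk} (giving $\textup{Diff}(M,h,\mathscr{L})\subseteq \textup{Diff}(M,h,\nabla)$) and Lemma \ref{nkwnknefknknkhhh} (giving the reverse inclusion via the lift $\textup{lift}_{\tau}(\psi)$ and the identification $\textup{Aut}(\mathbb{T}^{n})\cong\textup{GL}(n,\mathbb{Z})$). Your write-up simply makes explicit the bookkeeping the paper leaves implicit, and it is accurate.
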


	Under the hypotheses of Lemma \ref{nkwnknefknknkhhh}, there are two group homomorphisms 
	$\textup{Diff}(M,h,\nabla)\to \textup{Aut}(\mathbb{T}^{n})$ that one may naturally consider, namely 
	\eqref{neknkreknkfenk} and $\rho_{\tau}$. By Lemma \ref{nkwnknefknknkhhh}, 
	these homomorphisms are equal, and thus one can use either 
	of them to define the semidirect product $\mathbb{T}^{n}\rtimes \textup{Diff}(M,h,\nabla)$. 
	In terms of $\rho_{\tau}$, group multiplication reads 
	\begin{eqnarray}\label{neknkngekfnwknfk}
		(a,\psi)\cdot (a',\psi')=\big(\rho_{\tau}(\psi)(a') a, \psi\circ \psi'\big). 
	\end{eqnarray}

\begin{theorem}\label{nfkwndkefnknkwn}
	Let $\Phi:\mathbb{T}^{n}\times N\to N$ be a regular torification of a connected dually flat manifold $(M,h,\nabla)$, 
	with K\"{a}hler metric $g$. Given a compatible covering map $\tau:TM\to N^{\circ}$, the map
	\begin{center}
		\begin{tabular}{lrlll}
			$$ &   $\mathbb{T}^{n}\rtimes \textup{Diff}(M,h,\nabla)$ & 
			   $\to$    &$\textup{Aut}(N,g)^{\mathbb{T}^{n}},$ \\[0.5em]
			    &   $(a,\psi)$              & $\mapsto$ & 
			    $\Phi_{a}\circ \textup{lift}_{\tau}(\psi)$,
		\end{tabular}
	\end{center}
	is a group isomorphism, where the group structure of the semidirect product is given by \eqref{neknkngekfnwknfk}.
\end{theorem}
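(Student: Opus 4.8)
The plan is to reduce the statement to Proposition \ref{nfekwwndkefnk} by transporting everything to the quotient $M_{\mathscr{L}}$ through a toric parametrization, and to handle the passage between $N$ and $N^{\circ}$ by a rigidity argument rather than by proving an extension theorem. First I would fix a toric parametrization $(\mathscr{L},X,F)$ of $\Phi:\mathbb{T}^{n}\times N\to N$ whose induced toric factorization is $\pi=\kappa\circ\tau$, so that $\tau=F\circ q_{\mathscr{L}}$ with $F:M_{\mathscr{L}}\to N^{\circ}$ an equivariant K\"ahler isomorphism intertwining $(\Phi_{X})_{a}$ and $\Phi_{a}$. Writing $\Theta$ for the map in the statement, I would first check that $\Theta$ is a well-defined group homomorphism into $\textup{Aut}(N,g)^{\mathbb{T}^{n}}$: both $\Phi_{a}$ and $\textup{lift}_{\tau}(\psi)$ lie in $\textup{Aut}(N,g)^{\mathbb{T}^{n}}$, so their product does too, and multiplicativity is the usual one-line computation for a twisted product, using the defining relation $\textup{lift}_{\tau}(\psi)\circ\Phi_{a'}=\Phi_{\rho_{\tau}(\psi)(a')}\circ\textup{lift}_{\tau}(\psi)$, that $\textup{lift}_{\tau}$ is a homomorphism, and that $\mathbb{T}^{n}$ is abelian.

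Next I would introduce the comparison map $\Lambda:\textup{Aut}(N,g)^{\mathbb{T}^{n}}\to\textup{Aut}(M_{\mathscr{L}},g)^{\mathbb{T}^{n}}$, $\varphi\mapsto F^{-1}\circ(\varphi|_{N^{\circ}})\circ F$. For this to make sense I must show $\varphi(N^{\circ})=N^{\circ}$ for every $\varphi\in\textup{Aut}(N,g)^{\mathbb{T}^{n}}$: if $\varphi\circ\Phi_{a}=\Phi_{\rho(a)}\circ\varphi$ then the stabilizer of $\varphi(p)$ equals $\rho(\textup{Stab}(p))$, and since $\rho\in\textup{Aut}(\mathbb{T}^{n})$ is a bijection, $\varphi$ preserves the locus where the action is free. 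Granting this, $\Lambda$ is a well-defined group homomorphism, equivariance being transported through $F$. Using $\tau=F\circ q_{\mathscr{L}}$, the relation $\textup{lift}_{\tau}(\psi)\circ\tau=\tau\circ\psi_{*}$, and the formula \eqref{nkwdnknefknk} for $\Gamma$ (in particular $(\Phi_{X})_{a}=\Gamma_{(a,\textup{id})}$ and $F^{-1}\circ\textup{lift}_{\tau}(\psi)\circ F=\Gamma_{(0,\psi)}$ on $M_{\mathscr{L}}$), I would verify the key identity $\Lambda\circ\Theta=G$, where $G$ is the isomorphism of Proposition \ref{nfekwwndkefnk}; here the common domain is $\mathbb{T}^{n}\rtimes\textup{Diff}(M,h,\mathscr{L})=\mathbb{T}^{n}\rtimes\textup{Diff}(M,h,\nabla)$ by Proposition \ref{neknknefknkn}, and the two semidirect-product structures coincide because $\rho_{\tau}(\psi)=\rho_{R(\psi)}$ by Lemma \ref{nkwnknefknknkhhh}.

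The conclusion is then formal. Since $G$ is an isomorphism and $\Lambda\circ\Theta=G$, the map $\Theta$ is injective. For surjectivity, given $\varphi\in\textup{Aut}(N,g)^{\mathbb{T}^{n}}$ set $(a,\psi):=G^{-1}(\Lambda(\varphi))$; then $\Lambda(\Theta(a,\psi))=\Lambda(\varphi)$, and since $\Lambda$ amounts to restricting to $N^{\circ}$ and conjugating by the diffeomorphism $F$, this forces $\Theta(a,\psi)=\varphi$ on the nonempty open set $N^{\circ}$. Because both maps are holomorphic, hence real-analytic, and $N$ is connected, they agree on all of $N$, so $\Theta(a,\psi)=\varphi$. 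I expect the only real work to be bookkeeping: correctly matching the two descriptions of the semidirect product and tracking the intertwining automorphisms of $\mathbb{T}^{n}$ through $F$, $\textup{lift}_{\tau}$ and $\Gamma$ while verifying $\Lambda\circ\Theta=G$; the two genuinely geometric inputs, namely $\varphi(N^{\circ})=N^{\circ}$ and the $N^{\circ}$-to-$N$ rigidity, are both short.
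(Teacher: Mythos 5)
Your proof is correct, and its skeleton is the same as the paper's: fix a toric parametrization $(\mathscr{L},X,F)$, transport everything to $M_{\mathscr{L}}$, invoke Proposition \ref{nfekwwndkefnk} together with Proposition \ref{neknknefknkn} and the compatibility $\rho_{\tau}(\psi)=\rho_{R(\psi)}$ from Lemma \ref{nkwnknefknknkhhh}, and verify that the composite isomorphism is exactly $(a,\psi)\mapsto \Phi_{a}\circ\textup{lift}_{\tau}(\psi)$ by the same chain of identities $\tau=F\circ q_{\mathscr{L}}$, $\textup{lift}_{\tau}(\psi)\circ\tau=\tau\circ\psi_{*}$. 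The one place you genuinely diverge is the passage from $N^{\circ}$ back to $N$. The paper proves (Lemma \ref{neknknfknknkn}) that restriction $\textup{Aut}(N,g)^{\mathbb{T}^{n}}\to\textup{Aut}(N^{\circ},g)^{\mathbb{T}^{n}}$ is an isomorphism, and surjectivity there costs a real geometric input: the Kobayashi--Nomizu extension theorem for isometries of real-analytic Riemannian manifolds. You avoid this entirely by only using injectivity of your $\Lambda$ (restriction plus conjugation by $F$) and exploiting that the candidate preimage $\Theta(a,\psi)=\Phi_{a}\circ\textup{lift}_{\tau}(\psi)$ is already globally defined on $N$ by the lifting theorem (Theorem \ref{ncnkwknknk}); two globally defined holomorphic maps agreeing on the nonempty open set $N^{\circ}$ of the connected manifold $N$ agree everywhere. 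This is a mild but real simplification --- it trades an extension theorem for the identity principle, and it even needs only that $N^{\circ}$ is nonempty and open rather than dense --- though it does not weaken the hypotheses of the theorem, since regularity (hence real analyticity) is still consumed by the lifting machinery itself. Both geometric side conditions you flag ($\varphi(N^{\circ})=N^{\circ}$ via the stabilizer argument, and the multiplicativity of $\Theta$ via $\textup{lift}_{\tau}(\psi)\circ\Phi_{a'}=\Phi_{\rho_{\tau}(\psi)(a')}\circ\textup{lift}_{\tau}(\psi)$) are handled correctly.
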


	Before we proceed with the proof, let us establish some notation. K\"{a}hler metrics on $N,$ $N^{\circ}$ and $M_{\mathscr{L}}$ are denoted by the same symbol $g$. 
	Given a K\"{a}hler manifold $(W,g)$ equipped with a torus action $\Phi:\mathbb{T}^{n}\times W\to W$, 
	we will denote by $\textup{Aut}(W,g)^{\mathbb{T}^{n}}$ the set of holomorphic isometries of $W$ that are equivariant in the following sense: 
	for every $\varphi\in \textup{Aut}(W,g)^{\mathbb{T}^{n}}$, there is a Lie group isomorphism $\rho:\mathbb{T}^{n}\to \mathbb{T}^{n}$ such that 
	$\varphi\circ \Phi_{a}=\Phi_{\rho(a)}\circ \varphi$ for all $a\in \mathbb{T}^{n}$.

\begin{lemma}\label{neknknfknknkn}
	The map $\textup{Aut}(N,g)^{\mathbb{T}^{n}}\to \textup{Aut}(N^{\circ},g)^{\mathbb{T}^{n}}$, 
	$\varphi\mapsto \varphi\vert_{N^{\circ}}$ is a group isomorphism, where $\varphi\vert_{N^{\circ}}$ denotes the restriction of $\varphi$ to $N^{\circ}$. 
\end{lemma}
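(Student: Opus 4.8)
The plan is to check that the restriction map is a bijective group homomorphism; the homomorphism property is immediate (restriction respects composition and sends $\textup{id}_N$ to $\textup{id}_{N^\circ}$), so the work is in well-definedness, injectivity, and surjectivity. For well-definedness, I would first show that each $\varphi\in\textup{Aut}(N,g)^{\mathbb{T}^n}$ satisfies $\varphi(N^\circ)=N^\circ$: if $\rho\in\textup{Aut}(\mathbb{T}^n)$ is the automorphism with $\varphi\circ\Phi_a=\Phi_{\rho(a)}\circ\varphi$, then (using injectivity of $\varphi$) the relation $\Phi_a(\varphi(p))=\varphi(p)$ is equivalent to $\Phi_{\rho^{-1}(a)}(p)=p$, so the stabilizer of $\varphi(p)$ is the $\rho$-image of that of $p$; hence $\varphi$ maps $N^\circ$ into $N^\circ$, and the same for $\varphi^{-1}$ gives equality. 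Then $\varphi|_{N^\circ}$ is a diffeomorphism of $N^\circ$ which is visibly holomorphic, isometric, and still $\mathbb{T}^n$-equivariant with the same $\rho$, hence lies in $\textup{Aut}(N^\circ,g)^{\mathbb{T}^n}$. Injectivity is then unique continuation: $N$ is connected, $N^\circ$ is nonempty open, and a holomorphic self-map of $N$ — equivalently an isometry of the connected Riemannian manifold $N$, which is determined by its $1$-jet at a single point — is determined by its restriction to $N^\circ$.

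Surjectivity is the heart of the matter, and I would derive it from the torification machinery rather than try to extend an abstract isometry by hand. Fix a toric parametrization $(\mathscr{L},X,F)$ and set $\tau=F\circ q_{\mathscr{L}}$. Since $F\colon M_{\mathscr{L}}\to N^\circ$ is an equivariant K\"ahler isomorphism, conjugation by $F$ is a group isomorphism $\textup{Aut}(M_{\mathscr{L}},g)^{\mathbb{T}^n}\xrightarrow{\ \sim\ }\textup{Aut}(N^\circ,g)^{\mathbb{T}^n}$, and composing it with the isomorphism of Proposition \ref{nfekwwndkefnk} (and with Proposition \ref{neknknefknkn}, which identifies $\textup{Diff}(M,h,\mathscr{L})$ with $\textup{Diff}(M,h,\nabla)$) gives a group isomorphism $\mathbb{T}^n\rtimes\textup{Diff}(M,h,\nabla)\xrightarrow{\ \sim\ }\textup{Aut}(N^\circ,g)^{\mathbb{T}^n}$ carrying $(a,\psi)$ to $F\circ\Gamma_{(a,\psi)}\circ F^{-1}$, with $\Gamma$ the action \eqref{nkwdnknefknk}. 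So it suffices to produce, for each $(a,\psi)$, an element of $\textup{Aut}(N,g)^{\mathbb{T}^n}$ restricting to $F\circ\Gamma_{(a,\psi)}\circ F^{-1}$ on $N^\circ$; I claim $\Phi_a\circ\textup{lift}_\tau(\psi)$ works. It lies in $\textup{Aut}(N,g)^{\mathbb{T}^n}$ because $\textup{lift}_\tau$ is a homomorphism into $\textup{Aut}(N,g)^{\mathbb{T}^n}$ and $\Phi_a$ is an equivariant holomorphic isometry. Computing its restriction is then routine: $\textup{lift}_\tau(\psi)\circ\tau=\tau\circ\psi_*$ yields $F^{-1}\circ\textup{lift}_\tau(\psi)|_{N^\circ}\circ F=\Gamma_{(e,\psi)}$ after cancelling the surjection $q_{\mathscr{L}}$, the identity $F\circ(\Phi_X)_a=\Phi_a\circ F$ yields $F^{-1}\circ\Phi_a|_{N^\circ}\circ F=(\Phi_X)_a=\Gamma_{(a,\textup{id})}$, and $\Gamma_{(a,\textup{id})}\circ\Gamma_{(e,\psi)}=\Gamma_{(a,\textup{id})\cdot(e,\psi)}=\Gamma_{(a,\psi)}$; here one uses that $\textup{lift}_\tau(\psi)$ preserves $N^\circ$ (same argument as for well-definedness, since $\rho_\tau(\psi)$ is a torus automorphism).

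The step I expect to require the most care is surjectivity: the point is not to analyse $N\setminus N^\circ$ directly, but to recognise — via Proposition \ref{nfekwwndkefnk} and the existence of lifts — that every equivariant holomorphic isometry of $N^\circ$ is automatically the restriction of a globally defined $\Phi_a\circ\textup{lift}_\tau(\psi)$. The remaining bookkeeping, keeping the semidirect-product multiplication and the identifications through $F$, $q_{\mathscr{L}}$, $\Gamma$ and $\Phi_X$ mutually consistent, is straightforward but must be carried out carefully.
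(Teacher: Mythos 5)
Your proposal is correct, and the well-definedness and injectivity steps match the paper's (the stabilizer argument via surjectivity of $\rho$ for $\varphi(N^{\circ})=N^{\circ}$, then density/rigidity of isometries for injectivity). The surjectivity step, however, takes a genuinely different route. The paper extends an arbitrary $\varphi\in\textup{Aut}(N^{\circ},g)^{\mathbb{T}^{n}}$ directly, invoking the classical fact that an isometry of a connected open subset of a complete, simply connected, real-analytic Riemannian manifold extends to a global isometry (Kobayashi--Nomizu, Ch.~VI, Cor.~6.4), and then transfers holomorphy and equivariance from the dense set $N^{\circ}$ to $N$. You instead first classify $\textup{Aut}(N^{\circ},g)^{\mathbb{T}^{n}}$ as $\mathbb{T}^{n}\rtimes\textup{Diff}(M,h,\nabla)$ via Proposition \ref{nfekwwndkefnk} and Proposition \ref{neknknefknkn}, and then exhibit an explicit global extension $\Phi_{a}\circ\textup{lift}_{\tau}(\psi)$ of each $F\circ\Gamma_{(a,\psi)}\circ F^{-1}$ using the lifting theorem (Theorem \ref{ncnkwknknk}). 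There is no circularity within this paper — Proposition \ref{nfekwwndkefnk}, Proposition \ref{neknknefknkn} and the lifting machinery are all established independently of Lemma \ref{neknknfknknkn} — and your computation of the restriction is the same one the paper later performs inside the proof of Theorem \ref{nfkwndkefnknkwn}, so your route effectively merges the lemma with that theorem and makes the theorem an immediate corollary. The trade-off is that the analytic content has not disappeared: the paper's proof isolates exactly where real-analyticity of $g$ enters (the extension theorem), whereas your proof relocates that difficulty into the black box of Theorem \ref{ncnkwknknk}, whose proof in the cited reference rests on the same kind of analytic continuation. One tiny simplification available to you: that $\textup{lift}_{\tau}(\psi)$ preserves $N^{\circ}$ follows already from $\textup{lift}_{\tau}(\psi)\circ\tau=\tau\circ\psi_{*}$ together with the surjectivity of $\tau$ and $\psi_{*}$ onto $N^{\circ}$ and $TM$, with no need to rerun the stabilizer argument.
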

\begin{proof}
	Let $\varphi\in \textup{Aut}(N,g)^{\mathbb{T}^{n}}$ be arbitrary. First we show that $\varphi(N^{\circ})=N^{\circ}$. 
	Let $p\in N^{\circ}$ be arbitrary, and let $a\in \mathbb{T}^{n}$ be an element of the isotropy group of 
	$\varphi(p)$, that is, $\Phi(a,\varphi(p))=\varphi(p)$. Since $\varphi$ is equivariant by hypothesis, 
	there is a Lie group isomorphism 
	$\rho:\mathbb{T}^{n}\to \mathbb{T}^{n}$ such that $\varphi\circ \Phi_{t}=\Phi_{\rho(t)}\circ \varphi$ for all 
	$t\in \mathbb{T}^{n}$. Since $\rho$ is surjective, there is $b\in \mathbb{T}^{n}$ 
	such that $a=\rho(b)$. Thus
        \begin{eqnarray*}
		\Phi(a,\varphi(p))=\varphi(p) \,\,\,\,\,\,\,&\Rightarrow& \,\,\,\,\,\,\,(\Phi_{\rho(b)}\circ \varphi)(p)=\varphi(p) \,\,\,\,\,\,\,\,\,\,(a=\rho(b))\\
		&\Rightarrow& \,\,\,\,\,\,\,(\varphi\circ \Phi_{b})(p)=\varphi(p) \,\,\,\,\,\,\,\,\,\,(\textup{$\varphi$ is equivariant})\\
		&\Rightarrow& \,\,\,\,\,\,\,\Phi_{b}(p)=p \,\,\,\,\,\,\,\,\,\,(\textup{$\varphi$ is a diffeomorphism})\\
		&\Rightarrow& \,\,\,\,\,\,\,b=e\,\,\,\,\,\,\,\,\,\,(\textup{$\Phi$ is free at $p$})\\
		&\Rightarrow& \,\,\,\,\,\,\,a=\rho(b)=\rho(e)=e. 
	\end{eqnarray*}
	This shows that $\Phi$ is free at $\varphi(p)$. Thus $\varphi(N^{\circ})\subseteq N^{\circ}$. The same argument 
	with $\varphi^{-1}$ in place of $\varphi$ also shows that $\varphi(N^{\circ})\supseteq N^{\circ}$. Thus $\varphi(N^{\circ})=N^{\circ}$. 

	It follows that the restriction of $\varphi$ to $N^{\circ}$ is a holomorphic and isometric diffeomorphism from $N^{\circ}$ onto $N^{\circ}$. 
	Moreover, it is clear that this restriction is equivariant. Thus the map $\textup{Aut}(N,g)^{\mathbb{T}^{n}}\to \textup{Aut}(N^{\circ},g)^{\mathbb{T}^{n}}$ 
	is well defined. The latter is obviously a group homomorphism, and because $N^{\circ}$ is dense in $N$, it is injective. 
	It remains to show that it is surjective. So let $\varphi\in  \textup{Aut}(N^{\circ},g)^{\mathbb{T}^{n}}$ be arbitrary. 
	Because the K\"{a}hler metric on $N$ is real analytic, the isometry $\varphi$ extends uniquely 
	to an isometry $\widetilde{\varphi}:N\to N$ (see \cite{Kobayashi-Nomizu}, Chapter VI, Corollary 6.4). 
	Since $\widetilde{\varphi}$ is holomorphic and equivariant on $N^{\circ}$, which is dense in $N$, 
	$\widetilde{\varphi}$ is also holomorphic and equivariant on $N.$ 
	Thus $\widetilde{\varphi}\in \textup{Aut}(N,g)^{\mathbb{T}^{n}}$. 
	Since $\widetilde{\varphi}\vert_{N^{\circ}}=\varphi$, this shows that 
	the map $\textup{Aut}(N,g)^{\mathbb{T}^{n}}\to \textup{Aut}(N^{\circ},g)^{\mathbb{T}^{n}}$ is surjective. The lemma follows. 
\end{proof}

\begin{proof}[Proof of Theorem \ref{nfkwndkefnknkwn}]
	Let $(\mathscr{L},X,F)$ be a toric parametrization with corresponding toric factorization $\pi=\kappa\circ \tau.$
	Consider the following composition of group homomorphisms
        \begin{eqnarray*}
		\mathbb{T}^{n}\rtimes \textup{Diff}(M,h,\nabla)\overset{\varepsilon_{1}}{\longrightarrow} 
		\textup{Aut}(M_{\mathscr{L}},g)^{\mathbb{T}}\overset{\varepsilon_{2}}{\longrightarrow} \textup{Aut}(N^{\circ},g)^{\mathbb{T}} 
		\overset{\varepsilon_{3}}{\longrightarrow} \textup{Aut}(N,g)^{\mathbb{T}}, 
	\end{eqnarray*}
	where 
	\begin{itemize}
	\item $\varepsilon_{1}((a,\psi))(q_{\mathscr{L}}(u))=(\Phi_{X})_{a}\big(q_{\mathscr{L}}(\psi_{*}u)\big)$, 
	\item $\varepsilon_{2}(\varphi)= F\circ \varphi\circ F^{-1}$, 
	\item $\varepsilon_{3}$ is the inverse of the restriction map $\varphi\mapsto \varphi\vert_{N^{\circ}}$.
	\end{itemize}

	These three maps are group isomorphisms (this follows from Propositions \ref{nfekwwndkefnk} and \ref{neknknefknkn}, Lemma \ref{neknknfknknkn} and 
	the fact that $F$ is equivariant). Therefore $\varepsilon:=\varepsilon_{3}\circ \varepsilon_{2}\circ \varepsilon_{1}$ is a group isomorphism. 

	Let $a\in \mathbb{T}^{n}$ and $\psi\in \textup{Diff}(M,h,\nabla)$ be arbitrary. 
	Given $p=F(q_{\mathscr{L}}(u))\in N^{\circ}$, we compute: 
        \begin{eqnarray*}
		\big(\varepsilon_{2}(\varepsilon_{1}(a,\psi))\big)(p)&=& (F\circ \varepsilon_{1}(a,\psi)\circ F^{-1})(p) \,\,\,\,\,\,\,\textup{(definition of $\varepsilon_{2}$)} \\
		&=&(F\circ \varepsilon_{1}(a,\psi))(q_{\mathscr{L}}(u))\\
		&=& (F\circ (\Phi_{X})_{a})(q_{\mathscr{L}}(\psi_{*}u)) \,\,\,\,\,\,\,\textup{(definition of $\varepsilon_{1}$)}\\
		&=& (\Phi_{a}\circ F\circ q_{\mathscr{L}})(\psi_{*}u)) \,\,\,\,\,\,\,\textup{($F$ is equivariant)}\\
		&=& (\Phi_{a}\circ \tau)(\psi_{*}u) \,\,\,\,\,\,\,\textup{($\tau=F\circ q_{\mathscr{L}}$)}\\
		&=& (\Phi_{a}\circ \textup{lift}_{\tau}(\psi)\circ \tau)(u) \,\,\,\,\,\,\,\textup{(definition of a lift)}\\
		&=& (\Phi_{a}\circ \textup{lift}_{\tau}(\psi)\circ F\circ q_{\mathscr{L}})(u)\\
		&=& (\Phi_{a}\circ \textup{lift}_{\tau}(\psi))(p).
	\end{eqnarray*}
	Thus $\varepsilon_{2}(\varepsilon_{1}(a,\psi))=\Phi_{a}\circ \textup{lift}_{\tau}(\psi)$ on $N^{\circ}$, or equivalently, 
	$\varepsilon_{2}(\varepsilon_{1}(a,\psi))=\varepsilon_{3}^{-1}\big(\Phi_{a}\circ \textup{lift}_{\tau}(\psi)\big)$. It follows that 
	$\varepsilon(a,\psi)=\Phi_{a}\circ \textup{lift}_{\tau}(\psi)$. This concludes the proof of the theorem. 
\end{proof}

	In the next two corollaries, $\Phi:\mathbb{T}^{n}\times N\to N$ is a regular torification of a connected dually flat manifold $(M,h,\nabla)$.

\begin{corollary}\label{nfekndkefkndknk}
	For every compatible projection $\kappa:N^{\circ}\to M$, there is a surjective 
	group homomorphism $\varepsilon:\textup{Aut}(N,g)^{\mathbb{T}^{n}}\to \textup{Diff}(M,h,\nabla)$ such that 
	$\kappa\circ \varphi=\varepsilon(\varphi)\circ \kappa$ for all $\varphi\in \textup{Aut}(N,g)^{\mathbb{T}^{n}}$. 
\end{corollary}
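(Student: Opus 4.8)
The plan is to obtain $\varepsilon$ by inverting the isomorphism of Theorem \ref{nfkwndkefnknkwn} and composing with the projection of the semidirect product onto its second factor. Concretely, since $\kappa$ is a compatible projection, fix a toric parametrization $(\mathscr{L},X,F)$ whose induced toric factorization is $\pi=\kappa\circ\tau$, so that $\tau=F\circ q_{\mathscr{L}}$ and $\kappa=\pi_{\mathscr{L}}\circ F^{-1}$. By Theorem \ref{nfkwndkefnknkwn}, every $\varphi\in\textup{Aut}(N,g)^{\mathbb{T}^{n}}$ is uniquely of the form $\varphi=\Phi_{a}\circ\textup{lift}_{\tau}(\psi)$ with $(a,\psi)\in\mathbb{T}^{n}\rtimes\textup{Diff}(M,h,\nabla)$, and I would set $\varepsilon(\varphi):=\psi$. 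The projection $\mathbb{T}^{n}\rtimes\textup{Diff}(M,h,\nabla)\to\textup{Diff}(M,h,\nabla)$, $(a,\psi)\mapsto\psi$, is a surjective group homomorphism (immediate from the multiplication rule \eqref{neknkngekfnwknfk}), so composing it with the inverse of the isomorphism of Theorem \ref{nfkwndkefnknkwn} shows at once that $\varepsilon$ is a well-defined surjective group homomorphism; well-definedness is precisely the injectivity half of that theorem, so the decomposition $\varphi=\Phi_{a}\circ\textup{lift}_{\tau}(\psi)$ defining $\varepsilon(\varphi)$ is genuinely unique.

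It then remains to verify the intertwining relation $\kappa\circ\varphi=\varepsilon(\varphi)\circ\kappa$ on $N^{\circ}$ (note $\varphi(N^{\circ})=N^{\circ}$ by the proof of Lemma \ref{neknknfknknkn}, so both sides are defined as maps $N^{\circ}\to M$). I would split this into two parts. First, the compatible projection $\kappa$ is $\mathbb{T}^{n}$-invariant, i.e.\ $\kappa\circ\Phi_{a}=\kappa$ for all $a\in\mathbb{T}^{n}$: using that $\tau$ is onto $N^{\circ}$, that $F$ is equivariant, and the explicit formula \eqref{nkdnknewknek} for $(\Phi_{X})_{a}$, one computes $\kappa\circ\Phi_{a}\circ\tau=\pi_{\mathscr{L}}\circ(\Phi_{X})_{a}\circ q_{\mathscr{L}}=\pi=\kappa\circ\tau$, the middle equality holding because translating $u\in T_{p}M$ by $\sum_{k}t_{k}X_{k}(p)\in T_{p}M$ leaves $\pi(u)=p$ unchanged. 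Second, $\kappa\circ\textup{lift}_{\tau}(\psi)=\psi\circ\kappa$: this is the commutativity of the diagram \eqref{knkdfnknkndknk} (the relation $\kappa'\circ m=f\circ\kappa$ recalled just after the definition of a lift) specialized to $M=M'$, $f=\psi$, $m=\textup{lift}_{\tau}(\psi)$, $\tau'=\tau$, $\kappa'=\kappa$. Combining the two parts, $\kappa\circ\varphi=\kappa\circ\Phi_{a}\circ\textup{lift}_{\tau}(\psi)=\kappa\circ\textup{lift}_{\tau}(\psi)=\psi\circ\kappa=\varepsilon(\varphi)\circ\kappa$.

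Since this argument is essentially a repackaging of Theorem \ref{nfkwndkefnknkwn}, there is no serious obstacle; the only point requiring a short explicit computation is the $\mathbb{T}^{n}$-invariance of $\kappa$, and I would take care to phrase the choice of $(\mathscr{L},X,F)$ and $\tau$ so that it is clear every compatible projection arises from such data, so that the statement covers all $\kappa$ as claimed.
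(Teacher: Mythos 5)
Your proposal is correct and follows essentially the same route as the paper: decompose $\varphi=\Phi_{a}\circ\textup{lift}_{\tau}(\psi)$ via Theorem \ref{nfkwndkefnknkwn}, set $\varepsilon(\varphi)=\psi$, and conclude using the $\mathbb{T}^{n}$-invariance of $\kappa$ together with the commutativity of Diagram \eqref{knkdfnknkndknk}. The only difference is that you spell out the short computation showing $\kappa\circ\Phi_{a}=\kappa$, which the paper simply asserts.
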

\begin{proof}
	Let $\tau$ be a compatible covering map such that $\pi=\kappa\circ \tau$ is a toric factorization. 
	Let $\varphi\in \textup{Aut}(N,g)^{\mathbb{T}^{n}}$ be arbitrary. By Theorem \ref{nfkwndkefnknkwn}, 
	there are $a\in \mathbb{T}^{n}$ and $\psi\in \textup{Diff}(M,h,\nabla)$ such that 
	$\varphi=\Phi_{a}\circ \textup{lift}_{\tau}(\psi)$, and so 
	\begin{eqnarray}\label{neknekfneknk}
		\kappa\circ \varphi=\kappa\circ \Phi_{a}\circ \textup{lift}_{\tau}(\psi)=\kappa\circ \textup{lift}_{\tau}(\psi)=
		\psi\circ \kappa, 
	\end{eqnarray}
	where we have used the fact that compatible projections are $\mathbb{T}^{n}$-invariant and the commutativity of 
	Diagram \eqref{knkdfnknkndknk}. Define the group homomorphism 
	$\varepsilon:\textup{Aut}(N,g)^{\mathbb{T}^{n}}\to \textup{Diff}(M,h,\nabla)$ by $\varepsilon(\Phi_{a}\circ 
	\textup{lift}_{\tau}(\psi))=\psi$ (it is well defined and surjective by Theorem \ref{nfkwndkefnknkwn}).
	In terms of $\varepsilon,$ \eqref{neknekfneknk} can be rewritten as 
	$\kappa\circ \varphi=\varepsilon(\varphi)\circ \kappa$ for all $\varphi\in \textup{Aut}(N,g)^{\mathbb{T}^{n}}$, from 
	which the result follows. 
\end{proof}

\begin{corollary}\label{jekwwnkfnknfknkn}
	Let $\textup{Proj}$ denote the set of compatible projections $\kappa:N^{\circ}\to M$. Then the map 
	\begin{eqnarray}\label{nknkenfknkwnken}
		\textup{Diff}(M,h,\nabla) \times \textup{Proj}\to \textup{Proj}, \,\,\,\,\,\,\,\,(\psi,\kappa)\mapsto \psi\circ \kappa, 
	\end{eqnarray}
	is a free and transitive group action. 
\end{corollary}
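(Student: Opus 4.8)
The plan is to verify the three required properties in turn: that \eqref{nknkenfknkwnken} is well-defined (i.e.\ $\psi\circ\kappa$ is again a compatible projection), that it defines a group action, and that this action is free and transitive. The well-definedness and the action axioms are essentially formal: if $\kappa$ is a compatible projection, then by definition there is a toric parametrization $(\mathscr{L},X,F)$ with induced toric factorization $\pi=\kappa\circ\tau$, and I claim $\psi\circ\kappa$ is the compatible projection induced by a suitable new toric parametrization. The most direct route is to invoke Theorem \ref{nfkwndkefnknkwn} with $a=e$: the map $\textup{lift}_{\tau}(\psi)$ lies in $\textup{Aut}(N,g)^{\mathbb{T}^{n}}$, and by the commutativity of Diagram \eqref{knkdfnknkndknk} (with $f=\psi$, $m=\textup{lift}_{\tau}(\psi)$) we have $\kappa\circ\textup{lift}_{\tau}(\psi)=\psi\circ\kappa$. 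Since $\textup{lift}_{\tau}(\psi)$ is an equivariant holomorphic isometry of $N$, the last paragraph before Section \ref{neknkenkfnkn} (the statement that $\kappa'=\kappa\circ\varphi$ for $\varphi$ an equivariant holomorphic isometry is again a compatible projection) shows $\psi\circ\kappa\in\textup{Proj}$. The action axioms $(\psi_1\psi_2)\circ\kappa=\psi_1\circ(\psi_2\circ\kappa)$ and $\textup{id}\circ\kappa=\kappa$ are immediate from associativity of composition.

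For transitivity, I would argue as follows. Let $\kappa,\kappa'\in\textup{Proj}$. By the paragraph preceding Section \ref{neknkenkfnkn}, there is a holomorphic isometry $\varphi:N\to N$ and a Lie group isomorphism $\rho:\mathbb{T}^{n}\to\mathbb{T}^{n}$ with $\varphi\circ\Phi_a=\Phi_{\rho(a)}\circ\varphi$ for all $a$ and $\kappa'=\kappa\circ\varphi$; in particular $\varphi\in\textup{Aut}(N,g)^{\mathbb{T}^{n}}$. Now apply Corollary \ref{nfekndkefkndknk} to the compatible projection $\kappa$: there is $\psi:=\varepsilon(\varphi)\in\textup{Diff}(M,h,\nabla)$ with $\kappa\circ\varphi=\psi\circ\kappa$, hence $\kappa'=\psi\circ\kappa$. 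This shows the action is transitive.

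For freeness, suppose $\psi\circ\kappa=\kappa$ for some $\psi\in\textup{Diff}(M,h,\nabla)$ and some $\kappa\in\textup{Proj}$. Since $\kappa$ is a compatible projection, it is induced by a toric parametrization $(\mathscr{L},X,F)$, so $\kappa=\pi_{\mathscr{L}}\circ F^{-1}$; because $F$ is a diffeomorphism and $\pi_{\mathscr{L}}:M_{\mathscr{L}}\to M$ is a surjective submersion, $\kappa:N^{\circ}\to M$ is surjective. From $\psi\circ\kappa=\kappa$ and surjectivity of $\kappa$ we get $\psi=\textup{id}_{M}$. Thus the stabilizer of every point of $\textup{Proj}$ is trivial, i.e.\ the action is free. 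The only point requiring a little care is the surjectivity of a compatible projection, which is precisely the observation that $\pi_{\mathscr{L}}$ is surjective (a consequence of $\pi=\pi_{\mathscr{L}}\circ q_{\mathscr{L}}$ with $\pi:TM\to M$ surjective) together with $F$ being a bijection; I expect this is the only nonformal ingredient, and even it is immediate from Section \ref{nekwnkefkwnkk}.
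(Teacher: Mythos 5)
Your proof is correct and follows essentially the same route as the paper: well-definedness via Corollary \ref{nfekndkefkndknk} (equivalently, via $\psi\circ\kappa=\kappa\circ\textup{lift}_{\tau}(\psi)$), freeness from the surjectivity of compatible projections, and transitivity by combining the fact that any two compatible projections differ by an equivariant holomorphic isometry with Corollary \ref{nfekndkefkndknk}. One small caveat: the paragraph before Section \ref{neknkenkfnkn} states only that any two compatible projections are related by an equivariant holomorphic isometry, not the converse you attribute to it (that $\kappa\circ\varphi$ is again a compatible projection for every such $\varphi$); that converse is true and easy to check by transporting the toric parametrization by $\varphi^{-1}$, and the paper's own proof is equally terse on this point.
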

\begin{proof}
	The fact that \eqref{nknkenfknkwnken} is a well defined group action of $\textup{Diff}(M,h,\nabla)$ on 
	$\textup{Proj}$ follows easily from Corollary \ref{nfekndkefkndknk}. Freeness follows from the fact that compatible projections 
	are surjective. To see that \eqref{nknkenfknkwnken} 
	is transitive, let $\kappa$ and $\kappa'$ be compatible projections. We know from the general theory that there is $\varphi\in 
	\textup{Aut}(N,g)^{\mathbb{T}^{n}}$ such that $\kappa'=\kappa\circ \varphi$ (see Section \ref{neknkenkfnkn}). 
	By Corollary \ref{nknkenfknkwnken}, there exists $\psi\in \textup{Diff}(M,h,\nabla)$ such that $\kappa\circ \varphi=\psi\circ 
	\kappa$. Thus $\kappa'=\psi\circ \kappa$, which shows transitivity. 
\end{proof}

\section{Weyl group}\label{weeel}

	The group $\textup{Aut}(N,g)$ of holomorphic isometries of a connected and compact K\"{a}hler manifold 
	$(N,g)$, endowed with the compact-open topology, is a compact Lie group whose natural action on $N$ is 
	smooth\footnote{This follows from a classical theorem due to Myers and Steenrod \cite{Myers} and a theorem of 
	Weierstrass in complex geometry. More precisely, the Myers-Steenrod theorem states that 
	the group $\textup{Isom}(M,g)$ of isometries of a connected Riemannian manifold $(M,g)$ is a Lie group with respect to the compact-open topology 
	whose natural action on $M$ is smooth. Moreover, if $M$ is compact, then $\textup{Isom}(M,g)$ is also compact (for a modern 
	proof, see \cite{kobayashi-transformation}, Chapter II). Weierstrass' Theorem states that the space
	$\mathcal{O}(X,Y)$ of holomorphic maps between two complex manifolds $X$ and $Y$ is closed in the space $C(X,Y)$ of 
	continuous maps $X\to Y$ with respect to the compact-open topology. This theorem is 
	usually proved when $X$ is an open subset of $\mathbb{C}^{n}$ and $Y=\mathbb{C}$ (see, e.g., \cite{haslinger}), 
	but it is easy to generalize it to arbitrary complex manifolds. Combining these results, we obtain that the group 
	of holomorphic isometries $\textup{Aut}(N,g)$ of a compact K\"{a}hler manifold $N$ with K\"{a}hler metric $g$ is 
	a closed subgroup of the compact Lie group $\textup{Isom}(N,g)$, and so it is a compact Lie group.}. 
	If $\Phi:\mathbb{T}^{n}\times N\to N$ is an effective holomorphic and isometric torus action, it is not hard to see\footnote{
	This can be proved by arguments similar to those in \cite[Chapter I]{kobayashi-transformation}.} that 
	the image $S$ of $\mathbb{T}^{n}$ under the map $a\mapsto \Phi_{a}$ is a closed $n$-dimensional torus in $\textup{Aut}(N,g)$. 
	It is thus natural to ask whether $S$ is a Cartan subgroup of $\textup{Aut}(N,g)$ (in the 
	sense of Segal), and if so, what is the corresponding Weyl group $W(S)=N(S)/S$. 

%

	The next theorem is the main result of this paper. 

%
%
%
%
%
%
%
%

\begin{theorem}\label{nefknkenknkwndk}
	Let $\Phi:\mathbb{T}^{n}\times N\to N$ be a compact regular torification of a connected dually flat manifold $(M,h,\nabla)$, 
	with K\"{a}hler metric $g$. Let $S\subset \textup{Aut}(N,g)$ be the image of $\mathbb{T}^{n}$ under the map 
	$a\mapsto \Phi_{a}$. Then $S$ is a maximal torus and a Cartan subgroup of $\textup{Aut}(N,g)$. Moreover, given 
	a compatible covering map $\tau:TM\to N^{\circ}$, the map
	\begin{eqnarray}\label{kkfksjfkjkjkjdnjnek}
		\textup{Diff}(M,h,\nabla)\to W(S),\,\,\,\, \psi\mapsto \big[\textup{lift}_{\tau}(\psi)\big]
	\end{eqnarray}
	is a group isomorphism, where $W(S)=N(S)/S$ is the Weyl group (in the sense of Segal) associated to $S$,
	and $[\textup{lift}_{\tau}(\psi)]$ denotes the equivalence class of $\textup{lift}_{\tau}(\psi)$ in $W(S)$. 
\end{theorem}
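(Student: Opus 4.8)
The plan is to deduce everything from Theorem \ref{nfkwndkefnknkwn}, which already identifies the group $\textup{Aut}(N,g)^{\mathbb{T}^{n}}$ of equivariant holomorphic isometries with the semidirect product $\mathbb{T}^{n}\rtimes \textup{Diff}(M,h,\nabla)$ via $(a,\psi)\mapsto \Phi_{a}\circ \textup{lift}_{\tau}(\psi)$. First I would argue that $N(S)$, the normalizer of $S$ in $\textup{Aut}(N,g)$, \emph{equals} $\textup{Aut}(N,g)^{\mathbb{T}^{n}}$. Indeed, $\varphi\in N(S)$ means $\varphi S\varphi^{-1}=S$; since $S\cong\mathbb{T}^{n}$ via $a\mapsto\Phi_{a}$ and conjugation by $\varphi$ is a Lie-group automorphism of $S$, there is a unique $\rho\in\textup{Aut}(\mathbb{T}^{n})$ with $\varphi\circ\Phi_{a}\circ\varphi^{-1}=\Phi_{\rho(a)}$, i.e.\ $\varphi\circ\Phi_{a}=\Phi_{\rho(a)}\circ\varphi$ for all $a$; this is exactly the equivariance condition defining $\textup{Aut}(N,g)^{\mathbb{T}^{n}}$. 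Conversely any $\varphi\in\textup{Aut}(N,g)^{\mathbb{T}^{n}}$ visibly normalizes $S$. Hence $N(S)\cong \mathbb{T}^{n}\rtimes\textup{Diff}(M,h,\nabla)$, and under this identification $S$ corresponds to the normal subgroup $\mathbb{T}^{n}\rtimes\{\textup{id}\}$ (because $\Phi_{a}=\Phi_{a}\circ\textup{lift}_{\tau}(\textup{id})$, using that $\textup{lift}_{\tau}$ is a homomorphism). Therefore $W(S)=N(S)/S\cong(\mathbb{T}^{n}\rtimes\textup{Diff}(M,h,\nabla))/(\mathbb{T}^{n}\rtimes\{\textup{id}\})\cong\textup{Diff}(M,h,\nabla)$, and chasing through the isomorphism of Theorem \ref{nfkwndkefnknkwn} shows the induced map is precisely $\psi\mapsto[\textup{lift}_{\tau}(\psi)]$, which is (\ref{kkfksjfkjkjkjdnjnek}).

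Next I would establish that $S$ is a Cartan subgroup in the sense of Segal, which requires two things: (i) $S$ contains a topological generator, and (ii) $S$ has finite index in $N(S)$. For (i), $S\cong\mathbb{T}^{n}$ is a torus, and any torus contains an element whose powers are dense (a point with rationally independent coordinates), so this is immediate. For (ii), by the identification above $N(S)/S\cong\textup{Diff}(M,h,\nabla)$, so I must show $\textup{Diff}(M,h,\nabla)$ is finite. This is where the compactness of $N$ enters: since $N$ is compact, $\textup{Aut}(N,g)$ is a compact Lie group, hence so is its closed subgroup $\textup{Aut}(N,g)^{\mathbb{T}^{n}}=N(S)$; a compact Lie group has finitely many connected components, and $S$, being a maximal torus of $N(S)$ (see below), is open in the identity component, so $N(S)/S$ is finite. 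Equivalently, $\textup{Diff}(M,h,\nabla)\hookrightarrow\textup{GL}(n,\mathbb{Z})$ via $\psi\mapsto R(\psi)$ by Lemma \ref{nfekndkwenknfk} and Proposition \ref{neknknefknkn}, and the image is a subgroup of $\textup{GL}(n,\mathbb{Z})$ that is also compact (being a continuous image of a compact group, or because it embeds as a closed discrete subgroup of the compact group $W(S)$), hence finite; this gives both finiteness of $W(S)$ and the finite-index property directly.

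It remains to see that $S$ is a maximal torus of $\textup{Aut}(N,g)$. Since $\textup{Aut}(N,g)$ is compact, $S$ (an $n$-torus) is contained in some maximal torus $T$; I would argue $T\subseteq N(S)$ (a maximal torus is abelian, so it centralizes, hence normalizes, $S$), and then inside the identity component of $N(S)\cong\mathbb{T}^{n}\rtimes\textup{Diff}(M,h,\nabla)$, which is exactly $S$ since $\textup{Diff}(M,h,\nabla)$ is finite (discrete), we get $T\subseteq S$, forcing $T=S$. Thus $S$ is maximal.

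The main obstacle I anticipate is not any single deep step but rather the careful bookkeeping in the identification $N(S)=\textup{Aut}(N,g)^{\mathbb{T}^{n}}$ and in verifying that the isomorphism of Theorem \ref{nfkwndkefnknkwn} carries $S$ onto $\mathbb{T}^{n}\rtimes\{\textup{id}\}$ and therefore descends to $\psi\mapsto[\textup{lift}_{\tau}(\psi)]$; one must check that $\Phi_{a}\circ\textup{lift}_{\tau}(\psi)\in S$ forces $\psi=\textup{id}$, which follows from injectivity of the semidirect-product isomorphism. The only genuinely substantive input beyond Theorem \ref{nfkwndkefnknkwn} is the compactness of $\textup{Aut}(N,g)$ (hence of $N(S)$), used to force $\textup{Diff}(M,h,\nabla)$ to be finite; this is available because a compact regular torification is a K\"{a}hler toric manifold (Remark \ref{ndkdnknknknk}(c)) and $\textup{Aut}(N,g)$ of a compact K\"{a}hler manifold is a compact Lie group, as recalled at the start of Section \ref{weeel}.
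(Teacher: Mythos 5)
Your reduction of the isomorphism $W(S)\cong\textup{Diff}(M,h,\nabla)$ to Theorem \ref{nfkwndkefnknkwn}, via the identification $N(S)=\textup{Aut}(N,g)^{\mathbb{T}^{n}}$ and the observation that $S$ corresponds to $\mathbb{T}^{n}\rtimes\{\textup{id}\}$, matches the paper (Lemma \ref{nefkkwknefknwknknfenkdfndknk} and the final proof), and your derivation of the equivariance automorphism $\rho$ from smoothness of conjugation in the compact Lie group $\textup{Aut}(N,g)$ is a legitimate shortcut past the paper's explicit computation. The problem is the pair of remaining claims: your proof that $W(S)=N(S)/S$ is finite invokes that $S$ is a maximal torus (``$S$, being a maximal torus of $N(S)$ \dots is open in the identity component''), while your proof that $S$ is a maximal torus invokes that $\textup{Diff}(M,h,\nabla)$ is finite (``the identity component of $N(S)\cong\mathbb{T}^{n}\rtimes\textup{Diff}(M,h,\nabla)$ \dots is exactly $S$ since $\textup{Diff}(M,h,\nabla)$ is finite''). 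This is circular, and neither half can be discharged for free: compactness of $N(S)$ alone gives finiteness of $N(S)/N(S)_{0}$, not of $N(S)/S$, and the equality $N(S)_{0}=S$ is precisely the content of the standard fact $N_{G}(T)_{0}=T$ for a \emph{maximal} torus $T$ --- so maximality must come first and from elsewhere. Your fallback (``Equivalently, $\textup{Diff}(M,h,\nabla)\hookrightarrow\textup{GL}(n,\mathbb{Z})$ via $\psi\mapsto R(\psi)$ \dots hence finite'') does not repair this: injectivity of $\psi\mapsto R(\psi)$ is not automatic (its kernel consists of the affine ``translations'' in $\textup{Diff}(M,h,\nabla)$, which kill exactly the hard part), and the finiteness of the image is argued either from a compact topology on $\textup{Diff}(M,h,\nabla)$ that you have not exhibited or from discreteness of $W(S)$, which is again the statement $N(S)_{0}=S$ that you are trying to prove.

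The paper breaks the circle with two independent inputs that your proposal is missing. First, maximality of $S$ (Lemma \ref{nkenkenfkknk}) is proved symplectically: any torus $T\supseteq S$ in $\textup{Aut}(N,g)$ acts effectively and, since $N$ is simply connected, Hamiltonianly, so property (P2) forces $2\dim(T)\le\dim(N)=2n=2\dim(S)$. Second, finiteness of $\textup{Diff}(M,h,\nabla)$ (Lemmas \ref{nkdnknefkn} and \ref{neknwkenkwnkn}) is proved convex-geometrically: after identifying $(M,h,\nabla)$ with the momentum polytope $(\Delta^{\circ},k,\nabla^{k})$ and passing to the dual connection, each element extends to an affine transformation of $\mathbb{R}^{n}$ preserving the compact convex polytope $\Delta$, and such transformations act effectively on the finite set of extreme points of $\Delta$, hence form a finite group. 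Either one of these two arguments, inserted at the right place, would make your scheme work (maximality first yields $N(S)_{0}=S$ and hence finiteness of $W(S)\cong\textup{Diff}(M,h,\nabla)$; alternatively the polytope argument yields finiteness directly and then your maximality argument goes through); as written, you have neither.
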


	Recall that a toric K\"{a}hler manifold $(N,\omega,\Phi,\moment)$ is always a
	torification of its momentum polytope $(\Delta^{\circ},k,\nabla^{k})$, and also a torification of its 
	holomorphic polytope $(\mathbb{R}^{n},h,\nabla^{\textup{flat}})$ (see Section \ref{nknknvknknk}). 
	If in addition the K\"{a}hler metric $g$ is real analytic, then 
	$N$ is regular and Theorem \ref{nefknkenknkwndk} immediately implies the following result. 

\begin{corollary}
	Let $(N,\omega,\Phi,\moment)$ be a toric K\"{a}hler manifold with real analytic K\"{a}hler metric $g$. 
	Let $(\Delta^{\circ},k,\nabla^{k})$ be the corresponding momentum polytope, and let 
	$(\mathbb{R}^{n},h,\nabla^{\textup{flat}})$ be the holomorphic polytope associated to some $p\in N^{\circ}$. 
	If $W$ denotes the Weyl group of $\textup{Aut}(N,g)$ associated to $S=\{\Phi_{a}\,\,\vert\,\,a\in \mathbb{T}^{n}\}$, then
	\begin{eqnarray*}
	W\,\,\cong\,\, \textup{Diff}(\Delta^{\circ},k,\nabla^{k})\,\,\cong \,\,\textup{Diff}(\mathbb{R}^{n},h,\nabla^{\textup{flat}}). 
		\,\,\,\,\,\,\,\,\, \textup{(group isomorphisms)} 
	\end{eqnarray*}
\end{corollary}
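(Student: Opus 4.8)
The plan is to read the Corollary off Theorem~\ref{nefknkenknkwndk}, applied to the two dually flat presentations of $N$ that are recalled in Section~\ref{nknknvknknk}, together with one bookkeeping remark explaining why the Weyl group occurring in both applications is the same group.

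First I would verify that $N$, viewed as a K\"{a}hler toric manifold, is a \emph{compact regular} torification in the sense required by Theorem~\ref{nefknkenknkwndk}. By definition a toric K\"{a}hler manifold is compact and connected; it is simply connected by Delzant's theorem (property~(P5)) and complete because it is compact. Since the K\"{a}hler metric $g$ is real analytic by hypothesis, $N$ is therefore regular --- this is precisely Remark~\ref{ndkdnknknknk}(b). Moreover, as recalled in Section~\ref{nknknvknknk}, the triples $(\Delta^{\circ},k,\nabla^{k})$ and $(\mathbb{R}^{n},h,\nabla^{\textup{flat}})$ are connected dually flat manifolds (convexity of $\Delta$ gives connectedness of $\Delta^{\circ}$), and $\Phi\colon\mathbb{T}^{n}\times N\to N$ is simultaneously a torification of each of them. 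Crucially, in all of these situations the distinguished subgroup $S=\{\Phi_{a}\mid a\in\mathbb{T}^{n}\}\subset\textup{Aut}(N,g)$, its normalizer $N(S)$, and hence the Weyl group $W=W(S)=N(S)/S$, are literally the same objects: they depend only on $(N,g)$ and on the action $\Phi$, not on the dually flat manifold of which $\Phi$ is regarded as a torification.

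Next I would apply Theorem~\ref{nefknkenknkwndk} twice. With $(M,h,\nabla)=(\Delta^{\circ},k,\nabla^{k})$ and a choice of compatible covering map $\tau$, the theorem yields a group isomorphism $\textup{Diff}(\Delta^{\circ},k,\nabla^{k})\to W$, $\psi\mapsto[\textup{lift}_{\tau}(\psi)]$. With $(M,h,\nabla)=(\mathbb{R}^{n},h,\nabla^{\textup{flat}})$ and a compatible covering map $\tau'$, it yields a group isomorphism $\textup{Diff}(\mathbb{R}^{n},h,\nabla^{\textup{flat}})\to W$. Composing the first with the inverse of the second gives the chain of isomorphisms $\textup{Diff}(\Delta^{\circ},k,\nabla^{k})\cong W\cong\textup{Diff}(\mathbb{R}^{n},h,\nabla^{\textup{flat}})$ asserted by the Corollary. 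As a consistency check (and an alternative route to the last isomorphism), one may note that the isomorphism of dually flat manifolds between $(\Delta^{\circ},k,\nabla^{k})$ and $(\mathbb{R}^{n},h,\nabla^{\textup{flat}})$ from Section~\ref{nknknvknknk} conjugates one $\textup{Diff}$ group onto the other and, in view of the equivalence of regular torifications (Theorem~\ref{newdnkekfwndknk}), intertwines the two realizations of $W$.

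Since Theorem~\ref{nefknkenknkwndk} is already available, there is no serious obstacle here. The only points deserving care are the routine verification that $N$ is regular and --- the one genuinely conceptual observation --- the fact that the torus $S$, and therefore the Weyl group $W$, are intrinsic to $(N,g,\Phi)$ and do not change when one switches between the momentum-polytope and holomorphic-polytope descriptions of $N$; it is exactly this that lets the two applications of Theorem~\ref{nefknkenknkwndk} be glued into a single chain of isomorphisms.
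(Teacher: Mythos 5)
Your argument is correct and follows exactly the paper's route: the paper derives this corollary in one line by observing that $N$ is simultaneously a regular torification of its momentum polytope and of its holomorphic polytope (Section \ref{nknknvknknk} plus Remark \ref{ndkdnknknknk}(b)), and then applying Theorem \ref{nefknkenknkwndk} to each presentation. Your additional remarks --- that regularity follows from (P5), compactness, and real analyticity, and that $S$ and hence $W(S)$ are intrinsic to $(N,g,\Phi)$ and independent of which dually flat manifold is being torified --- are precisely the (unstated) justifications behind the paper's ``immediately implies.''
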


	The rest of this section is devoted to the proof of Theorem \ref{nefknkenknkwndk}. 
	Let $M,N,S,\Phi$ and $\tau$ be as in the theorem. 

\begin{lemma}\label{nkenkenfkknk}
	$S$ is a maximal torus in $\textup{Aut}(N,g)$. 
\end{lemma}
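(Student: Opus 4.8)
\textbf{Proof plan for Lemma \ref{nkenkenfkknk}.}
The plan is to show that $S$, which is already known to be a closed $n$-dimensional torus in the compact Lie group $\textup{Aut}(N,g)$, cannot be properly contained in any larger torus. Suppose for contradiction that $T$ is a torus with $S\subsetneq T\subseteq \textup{Aut}(N,g)$. Since $T$ is abelian, every element of $T$ commutes with every element of $S$, so in particular each $\varphi\in T$ satisfies $\varphi\circ \Phi_a=\Phi_a\circ\varphi$ for all $a\in\mathbb{T}^{n}$; that is, $T\subseteq \textup{Aut}(N,g)^{\mathbb{T}^{n}}$ (with the trivial reparametrization $\rho=\textup{id}$, or at worst a reparametrization that must be trivial since $T$ is connected and $\textup{Aut}(\mathbb{T}^{n})$ is discrete). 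First I would invoke Theorem \ref{nfkwndkefnknkwn} to write each such $\varphi$ as $\Phi_a\circ\textup{lift}_{\tau}(\psi)$ for a unique pair $(a,\psi)\in\mathbb{T}^{n}\rtimes\textup{Diff}(M,h,\nabla)$; the condition that $\varphi$ commutes with every $\Phi_b$ forces $\rho_{\tau}(\psi)=\textup{id}$, hence (by Lemma \ref{nkwnknefknknkhhh}) $\psi_{*}X_i=X_i\circ\psi$ for all $i$, i.e.\ $R(\psi)=\textup{Id}$.

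The next step is to pin down such a $\psi$. Since $\psi$ is an affine isometry of the connected dually flat manifold $M$ whose derivative fixes the parallel frame $X=(X_1,\dots,X_n)$ pointwise (in the sense $\psi_{*_p}X_k(p)=X_k(\psi(p))$), $\psi$ is, in the affine coordinates adapted to $X$, a translation by a fixed vector $c\in\mathbb{R}^{n}$ on each coordinate patch; connectedness of $M$ and the fact that these coordinates are globally compatible (the $X_i$ commute) show $\psi$ is a single global translation of $M$ by $c$. But a nontrivial translation of $M$ generates a noncompact one-parameter subgroup $\{t\mapsto\text{translation by }tc\}$ of $\textup{Diff}(M,h,\nabla)$, whereas $\textup{lift}_{\tau}$ is a homomorphism into the \emph{compact} group $\textup{Aut}(N,g)$; more directly, on the covering $TM$ the lift of such a translation acts by $u\mapsto \psi_{*}u$, which on $M_{\mathscr{L}}=TM/\Gamma(\mathscr{L})$ is a genuine translation only when $c$ lies in the lattice, and taking the flow shows the orbit closure is not contained in $S$ unless $c=0$. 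Hence $\psi=\textup{id}$ and $\varphi=\Phi_a\in S$. This gives $T\subseteq S$, a contradiction.

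The cleanest way to organize the last paragraph is probably to argue at the level of the Lie algebra: $\textup{Lie}(S)$ is an abelian subalgebra of $\textup{Lie}(\textup{Aut}(N,g))$, and I would show it is its own centralizer. An element $Y$ centralizing $\textup{Lie}(S)$ generates a flow commuting with $\Phi$, hence (by the argument above applied infinitesimally, using Theorem \ref{nfkwndkefnknkwn} and the fact that $\textup{Diff}(M,h,\nabla)$ has discrete identity component once we mod out the translation part, or else directly that the only equivariant Killing fields on $N$ commuting with all $x_N$ are the $x_N$ themselves) must be a fundamental vector field $x_N$ of the torus action; thus $Y\in\textup{Lie}(S)$. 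A maximal torus always contains the centralizer of its Lie algebra, so $S$ is maximal.

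\textbf{Main obstacle.} The delicate point is excluding the translation part: \emph{a priori} $\textup{Diff}(M,h,\nabla)$ could contain nontrivial affine-isometric translations of $M$, and one must rule out that their lifts extend $S$ to a bigger torus. The resolution is that such translations do not preserve the fundamental lattice $\mathscr{L}$ unless the translation vector is a lattice vector (in which case the induced map on $M_{\mathscr{L}}$ is already in $S$), combined with Proposition \ref{neknknefknkn} which identifies $\textup{Diff}(M,h,\nabla)$ with $\textup{Diff}(M,h,\mathscr{L})$ — so lattice-preservation is automatic and forces $c=0$ modulo $\mathscr{L}$. Making this compatibility precise, and checking that the reparametrization $\rho$ attached to an element of a connected torus $T\supseteq S$ is forced to be the identity, are the steps that need the most care.
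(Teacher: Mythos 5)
Your approach is genuinely different from the paper's, and it contains a gap at its crucial step. The reduction via Theorem \ref{nfkwndkefnknkwn} is fine: an element $\varphi$ of a torus $T\supseteq S$ commutes with all $\Phi_b$, hence lies in $\textup{Aut}(N,g)^{\mathbb{T}^n}$ and decomposes as $\Phi_a\circ\textup{lift}_{\tau}(\psi)$ with $R(\psi)=\textup{Id}$. But your argument that such a $\psi$ must be the identity does not work as stated. The fundamental lattice $\mathscr{L}$ is a subset of $TM$, sitting \emph{fiberwise} over $M$ (it is generated by the parallel frame $X$ in each tangent space); it is not a lattice of points in $M$. Consequently, any $\psi$ with $\psi_{*_p}X_k(p)=X_k(\psi(p))$ automatically satisfies $\psi_*(\mathscr{L})=\mathscr{L}$, whatever its ``translation vector'' $c$ is -- lattice preservation imposes no condition on $c$, contrary to what your ``main obstacle'' paragraph claims. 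Likewise, a nontrivial base translation never induces an element of $S$ (elements of $S$ move points only within the fibers of $\kappa$, not in the base direction), so the escape route ``in which case the induced map is already in $S$'' is not available. The noncompactness argument is also inconclusive: the cyclic group generated by $\textup{lift}_\tau(\psi)$ is an injective image of $\mathbb{Z}$ in the compact group $\textup{Aut}(N,g)$, which is perfectly possible (e.g.\ a dense winding of a circle), so no contradiction arises from compactness alone. What actually kills nontrivial $\psi$ with $R(\psi)=\textup{Id}$ is the finiteness of $\textup{Diff}(M,h,\nabla)$ (Lemma \ref{neknwkenkwnkn}, proved via the extreme points of the momentum polytope, independently of the present lemma): a finite group contains no element of infinite order, and a nontrivial translation in the adapted affine coordinates has infinite order. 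If you route your argument through that lemma (or simply observe that $T\subseteq N(S)\cong\mathbb{T}^n\rtimes\textup{Diff}(M,h,\nabla)$, so $S$ has finite index in any such $T$, forcing $T=S$ by connectedness), it closes.

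For comparison, the paper's proof is a two-line dimension count that avoids all of this: any torus $T\supseteq S$ acts effectively and symplectically on the compact, simply connected $N$, hence Hamiltonianly, and property (P2) gives $2\dim(T)\le\dim(N)=2\dim(S)$, so $\dim T=\dim S$ and $T=S$. Your structural approach, once repaired, proves more (it essentially recomputes $N(S)$ along the way), but the convexity-theoretic bound is both shorter and independent of the lifting machinery.
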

\begin{proof}
	Let $T\subseteq \textup{Aut}(N,g)$ be a torus such that 
	$S\subseteq T$. We must show that $S=T$. Because $S$ and $T$ are tori,  
	it suffices to show that $\textup{dim}(S)=\textup{dim}(T)$. 
	The natural action of $T$ on $N$ 
	is effective and symplectic, and since $N$ is simply connected, it is also 
	Hamiltonian. By (P2), this forces $2\,\textup{dim}(T)\leq \textup{dim}(N)$. 
	Since $\textup{dim}(N)=2\,\textup{dim}(\mathbb{T}^{n})=2\,\textup{dim}(S)$, 
	we obtain $\textup{dim}(T)\leq \textup{dim}(S)$. The reverse inequality is obvious. 
\end{proof}

	Recall that $\textup{Aut}(N,g)^{\mathbb{T}^{n}}$ denotes the group of holomorphic isometries of $N$ 
	that are equivariant in the following sense: for each $\varphi\in \textup{Aut}(N,g)^{\mathbb{T}^{n}}$, 
	there is a Lie group isomorphism $\rho:\mathbb{T}^{n}\to \mathbb{T}^{n}$ 
	such that $\varphi\circ \Phi_{a}=\Phi_{\rho(a)}\circ \varphi$ for all $a\in \mathbb{T}^{n}$. 
	Let $N(S)=\{\varphi\in \textup{Aut}(N,g)\,\vert\,\,\varphi\circ S\circ \varphi^{-1}=S\}$ be the normalizer of $S$ in $\textup{Aut}(N,g)$. 

\begin{lemma}\label{nefkkwknefknwknknfenkdfndknk}
	$N(S)=\textup{Aut}(N,g)^{\mathbb{T}^{n}}$.
\end{lemma}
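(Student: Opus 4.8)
The plan is to prove the two inclusions $N(S)\subseteq\textup{Aut}(N,g)^{\mathbb{T}^{n}}$ and $\textup{Aut}(N,g)^{\mathbb{T}^{n}}\subseteq N(S)$ separately; the second is essentially immediate, so the content lies in the first. For the easy inclusion, let $\varphi\in\textup{Aut}(N,g)^{\mathbb{T}^{n}}$. By definition there is a Lie group isomorphism $\rho:\mathbb{T}^{n}\to\mathbb{T}^{n}$ with $\varphi\circ\Phi_{a}=\Phi_{\rho(a)}\circ\varphi$ for all $a$. Conjugating, $\varphi\circ\Phi_{a}\circ\varphi^{-1}=\Phi_{\rho(a)}$, so $\varphi\, S\,\varphi^{-1}=\{\Phi_{\rho(a)}\mid a\in\mathbb{T}^{n}\}=S$ since $\rho$ is onto. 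Hence $\varphi\in N(S)$.

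For the reverse inclusion, take $\varphi\in N(S)$, so $\varphi\, S\,\varphi^{-1}=S$. The map $S\to S$, $s\mapsto\varphi s\varphi^{-1}$, is a continuous automorphism of the torus $S\cong\mathbb{T}^{n}$, hence equals $\rho_{A}$ (under the identification $S\cong\mathbb{T}^{n}$ via $a\mapsto\Phi_{a}$) for some $A\in\textup{GL}(n,\mathbb{Z})$. Translating this back through the parametrization $a\mapsto\Phi_{a}$ of $S$, this says precisely that there is a Lie group isomorphism $\rho:\mathbb{T}^{n}\to\mathbb{T}^{n}$ with $\varphi\circ\Phi_{a}\circ\varphi^{-1}=\Phi_{\rho(a)}$, i.e. $\varphi\circ\Phi_{a}=\Phi_{\rho(a)}\circ\varphi$ for all $a\in\mathbb{T}^{n}$. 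Together with the fact that $\varphi$ is by hypothesis a holomorphic isometry of $(N,g)$, this is exactly the statement that $\varphi\in\textup{Aut}(N,g)^{\mathbb{T}^{n}}$.

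The one point deserving care — and the main (mild) obstacle — is the claim that $s\mapsto\varphi s\varphi^{-1}$ is well defined as a map $S\to S$ and is a topological group automorphism: well-definedness and the homomorphism property are clear from $\varphi S\varphi^{-1}=S$, and continuity follows because $\textup{Aut}(N,g)$ carries the compact-open topology and composition and inversion are continuous there (as recalled in the footnote of Section \ref{weeel}). Then one invokes the classical fact, already cited in the paper before \eqref{neknkreknkfenk}, that every continuous automorphism of $\mathbb{T}^{n}$ is of the form $\rho_{A}$ with $A\in\textup{GL}(n,\mathbb{Z})$; in particular it is a Lie group isomorphism, which is all that is needed. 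No curvature, lifting, or torification machinery is required here — the lemma is purely a reformulation of the normalizer condition in terms of the equivariance condition defining $\textup{Aut}(N,g)^{\mathbb{T}^{n}}$, so the proof is short.
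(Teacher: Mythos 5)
Your proof is correct, but it handles the only nontrivial point — that the automorphism $\rho$ of $\mathbb{T}^{n}$ arising from conjugation is actually a \emph{Lie group} isomorphism and not merely an abstract one — by a genuinely different route from the paper. The paper first extracts, from effectiveness of $\Phi$, an abstract group isomorphism $\rho$ with $\varphi\circ\Phi_{a}\circ\varphi^{-1}=\Phi_{\rho(a)}$, and then proves smoothness of $\rho$ by explicit computation inside the torification: it transports $\varphi$ to a diffeomorphism $\widetilde{\varphi}=G^{-1}\circ\varphi\circ G$ of $\mathbb{T}^{n}\times M$ via the equivariant parametrization $G$, observes that $\widetilde{\varphi}(a,p)=(\rho(a)\phi(p),\psi(p))$, and reads off $\rho$ as a composition of smooth maps. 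You instead argue at the level of the transformation group: conjugation by $\varphi$ is continuous on the compact Lie group $\textup{Aut}(N,g)$, preserves the closed torus $S$, hence restricts to a continuous automorphism of $S\cong\mathbb{T}^{n}$, which by the classical fact cited before \eqref{neknkreknkfenk} is some $\rho_{A}$ with $A\in\textup{GL}(n,\mathbb{Z})$ and in particular smooth. Your route is shorter and needs none of the torification machinery, but it leans on the topological input asserted (with only a sketched justification in a footnote) at the start of Section \ref{weeel}: that $a\mapsto\Phi_{a}$ is a topological group isomorphism of $\mathbb{T}^{n}$ onto the closed subgroup $S$ of $\textup{Aut}(N,g)$ — which does hold, e.g.\ because a continuous injective homomorphism from a compact group into a Hausdorff group is a homeomorphism onto its image, injectivity coming from effectiveness. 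The paper's computation, by contrast, is self-contained within the torification framework and does not invoke the Lie group structure of $\textup{Aut}(N,g)$ at this point. Both arguments are valid; yours is the more economical given the facts already recorded in Section \ref{weeel}.
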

\begin{proof}
	The inclusion $\textup{Aut}(N,g)^{\mathbb{T}^{n}}\subseteq N(S)$ is obvious. For the reverse inclusion, 
	let $\varphi\in N(S)$ be arbitrary. It is easy to see from the definition of $N(S)$ and the fact that 
	$\Phi$ is effective that there is a group isomorphism $\rho:\mathbb{T}^{n}\to \mathbb{T}^{n}$ 
	such that 
	\begin{eqnarray}\label{nenkefknknknnn}
		\varphi\circ \Phi_{a}\circ \varphi^{-1}=\Phi_{\rho(a)} \,\,\,\,\,\,\,\,\Leftrightarrow \,\,\,\,\,\,\,\,
		\varphi\circ \Phi_{a}=\Phi_{\rho(a)}\circ \varphi
	\end{eqnarray}
	for all $a\in \mathbb{T}^{n}$. 
	We claim that $\rho$ is smooth. To see this, let $(\mathscr{L},X,F)$ 
	be a toric parametrization, where $\mathscr{L}$ is the fundamental lattice. 
	In the notation of Section \ref{nekwnkefkwnkk}, let $G:\mathbb{T}^{n}\times M\to N^{\circ}$ be the composition
	\begin{eqnarray*}
		\mathbb{T}^{n}\times M\overset{f}{\longrightarrow} M_{\mathscr{L}}\overset{F}{\longrightarrow} N^{\circ}, 
	\end{eqnarray*}
	where $f([u],p)=q_{\mathscr{L}}(u_{1}X_{1}(p)+...+u_{n}X_{n}(p))=(\Phi_{X})_{[u]}(q_{\mathscr{L}}(0_{p}))$. Then 
	$G$ is a diffeomorphism that is equivariant in the sense that $G\circ \Psi_{a}=\Phi_{a}\circ G$ for all $a\in \mathbb{T}^{n}$, 
	where $\Psi$ is the group action of $\mathbb{T}^{n}$ on $\mathbb{T}^{n}\times M$ given by left mutiplication. 
	Consider the composition $\widetilde{\varphi}:=G^{-1}\circ \varphi\circ G$. It is well-defined because 
	$\varphi(N^{\circ})=N^{\circ}$ (this follows from \eqref{nenkefknknknnn}), and is a smooth diffeomorphism of 
	$\mathbb{T}^{n}\times M$. Furthermore, a direct computation using \eqref{nenkefknknknnn} and the equivariance of $G$ shows that 
	$\widetilde{\varphi}$ is of the form 
	\begin{eqnarray}\label{nceknwdkenkfnkk}
		\widetilde{\varphi}(a,p)=(\rho(a)\phi(p),\psi(p)),  
	\end{eqnarray}
	where $\phi:M\to \mathbb{T}^{n}$ and $\psi:M\to M$ are smooth maps. Let $i:\mathbb{T}^{n}\to \mathbb{T}^{n}\times M$, 
	$a\mapsto (a,p_{0})$, where $p_{0}\in M$ is fixed, and 
	let $\pi:\mathbb{T}^{n}\times M\to \mathbb{T}^{n}$ be the projection onto the first factor. 
	It follows from \eqref{nceknwdkenkfnkk} that $\rho=\pi\circ \Psi_{\phi(p_{0})^{-1}}\circ \widetilde{\varphi}\circ i$, 
	which shows that $\rho$ is smooth. This completes the proof of the claim. It follows from the discussion above and 
	the claim that $\varphi\in \textup{Aut}(N,g)^{\mathbb{T}^{n}}$. 
	Thus $N(S)\subseteq \textup{Aut}(N,g)^{\mathbb{T}^{n}}$.
%
%
%
\end{proof}

	The next step in the proof of Theorem \ref{nefknkenknkwndk} is to show that $\textup{Diff}(M,h,\nabla)$ 
	is finite. We will do this by showing that $\textup{Diff}(M,h,\nabla)$ acts effectively on a finite set. 
	Let $\textup{Aff}(\mathbb{R}^{n})$ be the group of 
	affine transformations of $\mathbb{R}^{n}$. Thus, for each $\varphi\in \textup{Aff}(\mathbb{R}^{n})$, there 
	are a $n\times n$ invertible real matrix $A$ and $B\in \mathbb{R}^{n}$ such that 
	$\varphi(x)=Ax+B$ for all $x\in \mathbb{R}^{n}$. 
	Recall that a subset of $\mathbb{R}^{n}$ is a \textit{convex polytope} if it is the convex hull 
	of finitely many points in $\mathbb{R}^{n}$. 

\begin{lemma}\label{nkdnknefkn}
	Let $\Delta\subset \mathbb{R}^{n}$ be a convex polytope and let $\Gamma$ be a subgroup of $\textup{Aff}(\mathbb{R}^{n})$
	such that $\gamma(\Delta)=\Delta$ for all $\gamma\in \Gamma.$ If the topological interior of $\Delta$ is nonempty, 
	then $\Gamma$ is finite. 
\end{lemma}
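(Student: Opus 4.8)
The plan is to let $\Gamma$ act on the finite set of vertices of $\Delta$ and show this action is faithful, so that $\Gamma$ embeds into a finite symmetric group.

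First I would recall the relevant facts from convexity theory (see \cite{Rockafellar}). A convex polytope $\Delta\subset\mathbb{R}^{n}$ has a finite set of vertices $V=\{v_{1},\dots,v_{k}\}$, these vertices are exactly the extreme points of $\Delta$, and $\Delta=\textup{conv}(V)$. Moreover, since $\Delta$ has nonempty topological interior, $V$ is not contained in any proper affine subspace of $\mathbb{R}^{n}$; equivalently, $V$ affinely spans $\mathbb{R}^{n}$. (If the affine hull of $V$ were a proper subspace, then $\Delta=\textup{conv}(V)$ would lie in that subspace and hence have empty interior.)

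Next, observe that every $\gamma\in\Gamma$ is an affine bijection of $\mathbb{R}^{n}$ with $\gamma(\Delta)=\Delta$, and affine bijections carry extreme points of a convex set to extreme points of its image. Hence $\gamma$ restricts to a permutation of $V$, and the assignment $\Theta\colon\Gamma\to\textup{Sym}(V)$, $\gamma\mapsto\gamma|_{V}$, is a group homomorphism, because composition of affine maps restricts to composition of the induced permutations. I claim $\Theta$ is injective: if $\gamma|_{V}=\textup{id}_{V}$, then $\gamma$ fixes pointwise the affinely spanning set $V$, and an affine transformation of $\mathbb{R}^{n}$ is uniquely determined by its values on an affinely spanning set, so $\gamma=\textup{id}$. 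Therefore $\Gamma$ embeds into $\textup{Sym}(V)\cong\mathbb{S}_{k}$, which is finite, and the lemma follows.

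The only (mild) obstacle is the convexity bookkeeping in the first two steps — identifying the vertices of a polytope with its extreme points, noting that affine bijections permute extreme points, and that a full-dimensional polytope has an affinely spanning vertex set — all of which are standard and may be quoted directly from \cite{Rockafellar}.
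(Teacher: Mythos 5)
Your argument is correct and is essentially the same as the paper's: both let $\Gamma$ act on the finite set of extreme points (vertices) of $\Delta$, use the nonempty interior to conclude this set affinely spans $\mathbb{R}^{n}$, and deduce faithfulness from the fact that an affine map is determined by its values on an affinely spanning set. No gaps.
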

\begin{proof}
	Let $\textup{Ext}(\Delta)\subseteq \Delta$ be the set of 
	extreme points of $\Delta$. Thus a point $x\in \Delta$ is in 
	$\textup{Ext}(\Delta)$ if and only if it is not possible to express
	$x$ as a convex combination $(1-t)y+tz$, where $y,z\in \Delta$ and $0<t<1$, except by taking $x=y=z$. 
	We have:
	\begin{enumerate}[(1)]
	\item $\textup{Ext}(\Delta)$ is finite (this follows from \cite{Rockafellar}, Corollary 18.3.1), 
	\item $\Delta$ is the convex hull of $\textup{Ext}(\Delta)$ (Krein-Milman theorem), 
	\item $\gamma(\textup{Ext}(\Delta))=\textup{Ext}(\Delta)$ for all $\gamma\in \Gamma$ (this follows easily 
		from the fact that each $\gamma$ is affine and preserves $\Delta$). 
	\end{enumerate}

	It follows from (3) that $\Gamma$ naturally acts on $\textup{Ext}(\Delta)$, which is a finite set by (1). 
	Thus, to prove the lemma, it suffices to show that the action of $\Gamma$ on $\textup{Ext}(\Delta)$ 
	is effective. So let $\gamma$ be an element of $\Gamma$ such that 
	$\gamma(x)=x$ for all $x\in \textup{Ext}(\Delta)$. For simplicity, we will denote by 
	$\textup{Conv}(S)$ (resp. $\textup{Aff}(S)$) the convex hull (resp. affine hull) of a 
	subset $S\subseteq \mathbb{R}^{n}$. Using (2) and the fact that $\textup{Aff}(\textup{Conv}(S))=\textup{Aff}(S)$ for any 
	nonempty set $S\subseteq \mathbb{R}^{n}$, we see that $\textup{Aff}(\textup{Ext}(\Delta))= \textup{Aff}(\Delta)$. 
	Since the interior of $\Delta$ is nonempty by hypothesis, $\textup{Aff}(\Delta)=\mathbb{R}^{n}$. Therefore 
	$\textup{Aff}(\textup{Ext}(\Delta))=\mathbb{R}^{n}$. This implies that $\textup{Ext}(\Delta)$ contains 
	$n+1$ points $x_{0},...,x_{n}$ that are affinely independent. Because of our hypothesis on $\gamma$, 
	we have $\gamma(x_{i})=x_{i}$ for all $i\in\{0,....,n\}$, and thus $\gamma$ must be the identity map 
	on $\mathbb{R}^{n}$ (see \cite{Rockafellar}, Theorem 1.6). The lemma follows. 
\end{proof}

	By Remark \ref{ndkdnknknknk}(c), $N$ is naturally a K\"{a}hler toric manifold. 
	Let $\moment:N\to \mathbb{R}^{n}$ be an equivariant momentum map and let $(\Delta^{\circ},k,\nabla^{k})$ be 
	the corresponding momentum polytope (see Section \ref{nknknvknknk}).
%
%
	In \cite{molitor-toric}, it is proven that $(\Delta^{\circ},k,\nabla^{k})$ is a dually flat manifold and that there 
	is an isomorphism of dually flat manifolds between $(M,h,\nabla)$ and $(\Delta^{\circ},k,\nabla^{k})$. 
	Therefore the groups $\textup{Diff}(M,h,\nabla)$ and $\textup{Diff}(\Delta^{\circ},k,\nabla^{k})$ are isomorphic. 
	Thus, to prove that $\textup{Diff}(M,h,\nabla)$ if finite, it suffices to show that 
	$\textup{Diff}(\Delta^{\circ},k,\nabla^{k})$ is finite. Before we do so, we make to following 
	observation. 


\begin{lemma}
	$\textup{Diff}(\Delta^{\circ},k,\nabla^{k})=\textup{Diff}(\Delta^{\circ},k,\nabla^{\textup{flat}})$.
\end{lemma}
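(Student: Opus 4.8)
The plan is to show the two groups coincide by proving that a diffeomorphism $\psi$ of $\Delta^{\circ}$ is affine with respect to $\nabla^{k}$ if and only if it is affine with respect to $\nabla^{\textup{flat}}$, and since both groups are by definition the intersection with $\textup{Isom}(\Delta^{\circ},k)$, the claim follows. The key observation is that $\Delta^{\circ}$ is an open subset of $\mathbb{R}^{n}$ equipped with \emph{two} flat connections: the restriction $\nabla^{\textup{flat}}$ of the canonical connection of $\mathbb{R}^{n}$, and its $k$-dual $\nabla^{k}$, which is also flat because $(\Delta^{\circ},k,\nabla^{\textup{flat}})$ is a dually flat manifold (recall that $k$ is the Hessian of a potential $\varphi:\Delta^{\circ}\to\mathbb{R}$). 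A dually flat structure comes with two affine coordinate charts: the identity chart $x=(x_{1},\dots,x_{n})$ for $\nabla^{\textup{flat}}$, and the \emph{dual (gradient) coordinates} $y=(y_{1},\dots,y_{n})$ with $y_{i}=\partial\varphi/\partial x_{i}$, which are $\nabla^{k}$-affine; indeed $-\textup{grad}(\varphi)$ (up to sign conventions) is exactly the isometry $(\Delta^{\circ},k)\cong(\mathbb{R}^{n},h)$ appearing in the diagram of Section~\ref{neknknekwnefknk}, and dualizing interchanges the two connections.

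First I would reduce to a statement about isometries. If $\psi\in\textup{Isom}(\Delta^{\circ},k)$, then $\psi$ preserves the Levi-Civita connection $\nabla^{\textup{LC}}$ of $k$. The central point is the identity $\nabla^{\textup{LC}}=\tfrac12(\nabla^{\textup{flat}}+\nabla^{k})$, valid for any dually flat structure (the two dual connections average to the metric connection). Hence for an isometry $\psi$ we have $\psi^{*}\nabla^{\textup{flat}}+\psi^{*}\nabla^{k}=2\nabla^{\textup{LC}}=\nabla^{\textup{flat}}+\nabla^{k}$, so that $\psi^{*}\nabla^{\textup{flat}}-\nabla^{\textup{flat}}=-(\psi^{*}\nabla^{k}-\nabla^{k})$ as $(1,2)$-tensors. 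Therefore $\psi^{*}\nabla^{\textup{flat}}=\nabla^{\textup{flat}}$ if and only if $\psi^{*}\nabla^{k}=\nabla^{k}$; that is, among isometries of $k$, being $\nabla^{\textup{flat}}$-affine is equivalent to being $\nabla^{k}$-affine. Intersecting with $\textup{Isom}(\Delta^{\circ},k)$ on both sides gives precisely $\textup{Diff}(\Delta^{\circ},k,\nabla^{k})=\textup{Diff}(\Delta^{\circ},k,\nabla^{\textup{flat}})$.

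The main obstacle, and the step requiring care, is justifying the two structural identities $\nabla^{\textup{LC}}=\tfrac12(\nabla^{\textup{flat}}+\nabla^{k})$ and the flatness of $\nabla^{k}$ in a self-contained way; both are standard in information geometry (see \cite{Amari-Nagaoka}) but the paper has not stated them explicitly. I would either invoke them as well-known facts about dually flat manifolds with a citation to \cite{Amari-Nagaoka} or \cite{shima}, or give the one-line verification: writing $\Gamma^{\textup{flat}}_{ij,k}=0$ in the $x$-chart, the dual connection has $\Gamma^{k}_{ij,l}=\partial_{i}\partial_{j}\partial_{l}\varphi=:c_{ijl}$, a totally symmetric tensor, whence $\nabla^{k}$ is symmetric and its Christoffel symbols in the dual $y$-chart vanish (flatness), and $\tfrac12(\Gamma^{\textup{flat}}+\Gamma^{k})_{ij,l}=\tfrac12 c_{ijl}$, which matches the Levi-Civita symbols $\tfrac12(\partial_{i}k_{jl}+\partial_{j}k_{il}-\partial_{l}k_{ij})=\tfrac12 c_{ijl}$ since $k_{ij}=\partial_{i}\partial_{j}\varphi$. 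With these in hand, the proof is the three-line tensor computation above; no further analytic input is needed.
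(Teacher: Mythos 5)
Your proof is correct, but it takes a different route from the paper's. The paper immediately reduces the statement to a general fact about dually flat manifolds, namely that $\textup{Diff}(M,h,\nabla)=\textup{Diff}(M,h,\nabla^{*})$ for any connected dually flat manifold $(M,h,\nabla)$, and proves that fact by a direct computation: for an isometry $\varphi$ that is $\nabla$-affine, one checks $h\big(\varphi_{*}(\nabla^{*}_{X}Y),\varphi_{*}Z\big)=h\big(\nabla^{*}_{\varphi_{*}X}(\varphi_{*}Y),\varphi_{*}Z\big)$ using only the defining relation $A(h(B,C))=h(\nabla_{A}B,C)+h(B,\nabla^{*}_{A}C)$ and the nondegeneracy of $h$. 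You instead route the argument through the Levi--Civita connection, using the averaging identity $\nabla^{\textup{LC}}=\tfrac{1}{2}(\nabla^{\textup{flat}}+\nabla^{k})$ together with the fact that isometries preserve $\nabla^{\textup{LC}}$, and conclude via the tensor identity $\psi^{*}\nabla^{\textup{flat}}-\nabla^{\textup{flat}}=-(\psi^{*}\nabla^{k}-\nabla^{k})$. Both arguments are sound and apply in the same generality (your argument never uses flatness, only torsion-freeness of the two connections and the duality relation, so it also proves the paper's general version); what yours buys is a clean symmetry that yields both inclusions at once from a one-line tensor identity, at the cost of importing two standard structural facts (the averaging identity and the naturality of the Levi--Civita connection under isometries) that the paper's self-contained computation avoids. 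Your verification of the averaging identity via the Christoffel symbols $\Gamma^{k}_{ij,l}=\partial_{i}\partial_{j}\partial_{l}\varphi$ is correct, and your reduction to isometries is legitimate since both groups are by definition intersections with $\textup{Isom}(\Delta^{\circ},k)$.
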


	This lemma is a special case of the following result, in which 
	we do not presuppose any particular properties on $(M,h,\nabla)$ (like being toric). 

\begin{lemma}
	Let $(M,h,\nabla)$ be a connected dually flat manifold. 
	Then $\textup{Diff}(M,h,\nabla)=\textup{Diff}(M,h,\nabla^{*})$, where 
	$\nabla^{*}$ is the dual connection of $\nabla$ with respect to $h$. 
\end{lemma}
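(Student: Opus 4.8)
The plan is to establish the inclusion $\textup{Diff}(M,h,\nabla)\subseteq \textup{Diff}(M,h,\nabla^{*})$ directly, and then obtain the reverse inclusion by symmetry, using $(\nabla^{*})^{*}=\nabla$ — a fact that is immediate from the defining relation of the dual connection recalled in Section~\ref{nkwwnkwnknknfkn}. Thus it suffices to prove: if a diffeomorphism $\varphi:M\to M$ is an isometry of $(M,h)$ and is affine with respect to $\nabla$, then it is affine with respect to $\nabla^{*}$.

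First I would set up the pullback-connection formalism. For a diffeomorphism $\varphi$ and a vector field $X$ on $M$, let $\varphi_{*}X$ denote the pushed-forward vector field, characterized by $(\varphi_{*}X)(\varphi(p))=\varphi_{*_{p}}X(p)$, and define a connection $\varphi^{*}\nabla$ by $(\varphi^{*}\nabla)_{X}Y:=\varphi_{*}^{-1}\big(\nabla_{\varphi_{*}X}\,\varphi_{*}Y\big)$. Then $\varphi$ is affine with respect to $\nabla$ precisely when $\varphi^{*}\nabla=\nabla$, and $\varphi$ is an isometry precisely when $\varphi^{*}h=h$. The core of the argument is the identity
\begin{eqnarray*}
	\big(\varphi^{*}\nabla\big)^{*}=\varphi^{*}\big(\nabla^{*}\big),
\end{eqnarray*}
valid for every diffeomorphism $\varphi$, where the dual connection on the left is taken with respect to $\varphi^{*}h$ and the one on the right with respect to $h$.

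To prove this identity I would use the defining relation of the dual connection together with its uniqueness. Starting from the elementary formula $X(f\circ\varphi)=\big((\varphi_{*}X)f\big)\circ\varphi$ for a smooth function $f$, one computes $X\big((\varphi^{*}h)(Y,Z)\big)=\Big((\varphi_{*}X)\big(h(\varphi_{*}Y,\varphi_{*}Z)\big)\Big)\circ\varphi$, then applies the relation $X'\big(h(Y',Z')\big)=h(\nabla_{X'}Y',Z')+h(Y',\nabla^{*}_{X'}Z')$ with $X'=\varphi_{*}X$, $Y'=\varphi_{*}Y$, $Z'=\varphi_{*}Z$, and finally composes with $\varphi$ again. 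The two terms obtained are exactly $(\varphi^{*}h)\big((\varphi^{*}\nabla)_{X}Y,Z\big)$ and $(\varphi^{*}h)\big(Y,(\varphi^{*}\nabla^{*})_{X}Z\big)$, so $\varphi^{*}(\nabla^{*})$ satisfies the defining equation of the dual of $\varphi^{*}\nabla$ relative to $\varphi^{*}h$; uniqueness of the dual connection then yields the identity.

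With the identity in hand the proof concludes quickly: if $\varphi\in\textup{Diff}(M,h,\nabla)$ then $\varphi^{*}h=h$ and $\varphi^{*}\nabla=\nabla$, hence $\varphi^{*}(\nabla^{*})=(\varphi^{*}\nabla)^{*}=\nabla^{*}$ (the dual being taken with respect to $h=\varphi^{*}h$ throughout), so $\varphi\in\textup{Diff}(M,h,\nabla^{*})$. Applying the same reasoning with $\nabla^{*}$ in place of $\nabla$ and using $(\nabla^{*})^{*}=\nabla$ gives $\textup{Diff}(M,h,\nabla^{*})\subseteq\textup{Diff}(M,h,\nabla)$, and the two inclusions give the claimed equality. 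I do not anticipate a genuine obstacle here; the only point requiring care is the bookkeeping of base-points in the pushforward identities, which is routine.
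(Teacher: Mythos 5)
Your proposal is correct and follows essentially the same route as the paper: one inclusion by the defining relation of the dual connection (your naturality identity $(\varphi^{*}\nabla)^{*}=\varphi^{*}(\nabla^{*})$ is just a packaged form of the paper's direct computation of $h\big(\varphi_{*}(\nabla^{*}_{X}Y),\varphi_{*}Z\big)$, with uniqueness of the dual connection standing in for non-degeneracy of $h$), and the reverse inclusion by the symmetry $(\nabla^{*})^{*}=\nabla$.
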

\begin{proof}
	Clearly, it suffices to show one inclusion. So let $\varphi\in \textup{Diff}(M,h,\nabla)$ be arbitrary. 
	We must show that $\varphi$ is affine with respect to $\nabla^{*}$. Since $\varphi$ is a diffeomorphism, this is equivalent 
	to show that $\varphi_{*}(\nabla^{*}_{X}Y)=\nabla^{*}_{\varphi_{*}X}(\varphi_{*}Y)$ for 
	all vector fields $X,Y$ on $M$, where $\varphi_{*}X$ 
	denotes the pushforward of $X$ by $\varphi$, that is, $(\varphi_{*}X)_{p}=\varphi_{*_{\varphi^{-1}(p)}}X_{\varphi^{-1}(p)}$ 
	for all $p\in M$ (see \cite[Chapter VI, Proposition 1.4]{Kobayashi-Nomizu}). To see this, it suffices to show that 
	\begin{eqnarray*}
		h\big(\varphi_{*}(\nabla^{*}_{X}Y),\varphi_{*}Z\big)=h\big(\nabla^{*}_{\varphi_{*}X}(\varphi_{*}Y),\varphi_{*}Z\big)
	\end{eqnarray*}
	for all vector fields $X,Y,Z$ on $M$, which can be done by a direct computation using the formula 
	$A(h(B,C))=h(\nabla_{A}B,C)+h(B,\nabla^{*}_{A}C)$, where $A,B,C$ are vector fields on $M$.  
\end{proof}

\begin{lemma}\label{neknwkenkwnkn}
	$\textup{Diff}(M,h,\nabla)$ is finite.
\end{lemma}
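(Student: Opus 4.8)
The plan is to use everything already set up just above the statement. We have reduced, via the isomorphism of dually flat manifolds between $(M,h,\nabla)$ and the momentum polytope $(\Delta^{\circ},k,\nabla^{k})$ together with the equality $\textup{Diff}(\Delta^{\circ},k,\nabla^{k})=\textup{Diff}(\Delta^{\circ},k,\nabla^{\textup{flat}})$, the problem to showing that $\textup{Diff}(\Delta^{\circ},k,\nabla^{\textup{flat}})$ is a finite group. So from now on I would work with the open polytope $\Delta^{\circ}\subseteq\mathbb{R}^{n}$, keeping in mind that it is nonempty because $\Phi$ is effective (see the discussion following (P8)).

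Next I would observe that an element $\varphi\in\textup{Diff}(\Delta^{\circ},k,\nabla^{\textup{flat}})$ is in particular a diffeomorphism of $\Delta^{\circ}$ that is affine with respect to $\nabla^{\textup{flat}}$, i.e.\ it sends $\nabla^{\textup{flat}}$-geodesics (straight line segments) to $\nabla^{\textup{flat}}$-geodesics. Since $\Delta^{\circ}$ is a connected open subset of $\mathbb{R}^{n}$ and $\nabla^{\textup{flat}}$ is the restriction of the canonical flat connection, affineness forces all second partial derivatives of $\varphi$ (in the linear coordinates of $\mathbb{R}^{n}$) to vanish; hence $\varphi$ is the restriction of a unique affine transformation $\widehat{\varphi}\in\textup{Aff}(\mathbb{R}^{n})$. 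The assignment $\varphi\mapsto\widehat{\varphi}$ is then an injective group homomorphism from $\textup{Diff}(\Delta^{\circ},k,\nabla^{\textup{flat}})$ into $\textup{Aff}(\mathbb{R}^{n})$; call its image $\Gamma$.

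Finally, since $\widehat{\varphi}$ is continuous and $\widehat{\varphi}(\Delta^{\circ})=\varphi(\Delta^{\circ})=\Delta^{\circ}$, it maps $\overline{\Delta^{\circ}}$ onto $\overline{\Delta^{\circ}}$; and $\overline{\Delta^{\circ}}=\Delta$ because $\Delta$ is a convex polytope with nonempty interior. Thus every $\gamma\in\Gamma$ satisfies $\gamma(\Delta)=\Delta$, so $\Gamma$ is finite by Lemma~\ref{nkdnknefkn} (the nonemptiness of the interior of $\Delta$ being exactly the hypothesis required there). Since $\textup{Diff}(\Delta^{\circ},k,\nabla^{\textup{flat}})\cong\Gamma$, it is finite, and therefore so is $\textup{Diff}(M,h,\nabla)$. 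I do not anticipate a serious obstacle: the essential content is already packaged in Lemma~\ref{nkdnknefkn}, and the only points needing a little care are that an affine diffeomorphism between connected open subsets of $\mathbb{R}^{n}$ genuinely extends to a global affine map of $\mathbb{R}^{n}$, and that taking the closure of $\Delta^{\circ}$ recovers $\Delta$.
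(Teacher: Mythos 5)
Your proposal is correct and follows essentially the same route as the paper's own proof: reduce to $\textup{Diff}(\Delta^{\circ},k,\nabla^{\textup{flat}})$, extend each affine diffeomorphism of $\Delta^{\circ}$ to a global element of $\textup{Aff}(\mathbb{R}^{n})$, note that it preserves $\Delta=\textup{cl}(\Delta^{\circ})$, and invoke Lemma~\ref{nkdnknefkn}. The only cosmetic difference is that you justify the affine extension by the vanishing of second partials in linear coordinates, whereas the paper cites Kobayashi--Nomizu for the same fact.
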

\begin{proof}
	In view of the discussion above, it suffices to prove that $\textup{Diff}(\Delta^{\circ},k,\nabla^{\textup{flat}})$ 
	is finite. Let $\varphi\in \textup{Diff}(\Delta^{\circ},k,\nabla^{\textup{flat}})$ be arbitrary. 
	Because $\varphi$ is affine with respect to the flat connection $\nabla^{\textup{flat}}$, it extends uniquely 
	to an affine map $\widetilde{\varphi}\in \textup{Aff}(\mathbb{R}^{n})$ (see \cite[Chapter VI, Corollary 6.2]{Kobayashi-Nomizu}). 
	We claim that $\widetilde{\varphi}(\Delta)=\Delta$. Indeed, the fact that $\Delta$ is a closed convex set whose 
	topological interior $\Delta^{\circ}$ is nonempty implies that $\Delta$ is the closure of its interior, that is, 
	$\Delta=\textup{cl}(\Delta^{\circ})$ (see \cite{Rockafellar}, Theorem 6.3). It follows that 
	\begin{eqnarray*}
		\widetilde{\varphi}(\Delta)=\widetilde{\varphi}(\textup{cl}(\Delta^{\circ}))=
		\textup{cl}(\widetilde{\varphi}(\Delta^{\circ}))=\textup{cl}(\varphi(\Delta^{\circ}))= 
		\textup{cl}(\Delta^{\circ})=\Delta,
	\end{eqnarray*}
	where we have used the fact that $\widetilde{\varphi}$ is a homeomorphism. This proves the claim. 
	The latter implies that $\textup{Diff}(\Delta^{\circ},k,\nabla^{\textup{flat}})$ is isomorphic 
	to a subgroup of $\textup{Aff}(\mathbb{R}^{n})$ that leaves invariant $\Delta$. By Lemma \ref{nkdnknefkn}, this 
	group is finite.  
\end{proof}

\begin{proof}[Proof of Theorem \ref{nefknkenknkwndk}]
	Let $S=\{\Phi_{a}\,\,\vert\,\,a\in \mathbb{T}^{n}\}\subset \textup{Aut}(N,g).$
	The fact that $\textup{Diff}(M,h,\nabla)\to N(S)/S$, $\psi\mapsto [\textup{lift}_{\tau}(\psi)]$ is a group isomorphism 
	follows from Theorem \ref{nfkwndkefnknkwn} and Lemma \ref{nefkkwknefknwknknfenkdfndknk}. 
	By Lemma \ref{nkenkenfkknk}, $S$ is a torus and hence it contains an element whose powers are dense in $S$. 
	The index of $S$ in its normalizer $N(S)$ is finite because $N(S)/S$ is isomorphic to $\textup{Diff}(M,h,\nabla)$, 
	which is finite by Lemma \ref{neknwkenkwnkn}. 
	Therefore $S$ is a Cartan subgroup of $\textup{Aut}(N,g)$. 
\end{proof}

\section{The case of exponential families}\label{nknkeknkenfkn}

	In this section, we consider the special case in which the dually flat manifold $(M,h,\nabla)$ is an exponential family 
	$\mathcal{E}$ defined over a finite set $\Omega$. Our objective is to show Proposition \ref{nfekwnknfeknk} below, 
	which gives an algebraic characterization of the group of affine isometries of $\mathcal{E}$. 

	Let $\Omega=\{x_{1},...,x_{m}\}$ be a finite set endowed with the counting measure, and let 
	$\mathcal{E}$ be an exponential family defined over $\Omega$, with elements of the form 
        \begin{eqnarray}\label{nekwnkenkfnk}
		p(x;\theta)=p_{\theta}(x)=e^{C(x)+\langle \theta,F(x)\rangle-\psi(\theta)}, 
	\end{eqnarray}
	where $\theta\in \mathbb{R}^{n}$, 
	$C:\Omega\to \mathbb{R}$, $F=(F^{1},...,F^{n}):\Omega\to \mathbb{R}^{n}$, $\langle u,v\rangle=u_{1}v_{1}+...+u_{n}v_{n}$ 
	is the usual Euclidean pairing in $\mathbb{R}^{n}$ and $\psi:\mathbb{R}^{n}\to \mathbb{R}$. 
	It is assumed that the functions $1,F^{1},...,F^{n}$ are linearly independent. 
	Let $h_{F}$ and $\nabla^{(e)}$ be the Fisher metric and exponential connection on $\mathcal{E}$, 
	respectively.

	For simplicity, we write $C_{i}=C(x_{i})$ and 
	$F_{i}=F(x_{i})=(F_{i}^{1},...,F_{i}^{n})$. Note that the condition $\sum_{x\in \Omega}p(x;\theta)=1$ implies 
	\begin{eqnarray}\label{nknknrkfenknk}
		\psi(\theta)=\ln\bigg(\sum_{i=1}^{m}e^{C_{i}+\langle \theta,F_{i}\rangle }\bigg)
	\end{eqnarray}
	for all $\theta\in \mathbb{R}^{n}$. It is well-known that the coordinate expression for the Fisher metric 
	in the $\theta$-coordinates is the Hessian of $\psi$, that is (see \cite{Amari-Nagaoka}),
	\begin{eqnarray}\label{ndknknknsknkn}
		(h_{F})_{ij}(\theta)=h_{F}\bigg(\dfrac{\partial}{\partial \theta_{i}},
	\dfrac{\partial}{\partial \theta_{j}}\bigg)=\dfrac{\partial^{2}\psi}{\partial \theta_{i}\partial \theta_{j}}(\theta).
	\end{eqnarray}
	Let $\mathbb{S}_{m}$ denote the permutation group of $\{1,...,m\}$. 

\begin{definition}
	We shall say that a diffeomorphism $\varphi$ of $\mathcal{E}$ is a \textit{permutation} 
	if there is $\sigma\in \mathbb{S}_{m}$ such that 
	$\varphi(p)(x_{i})=p(x_{\sigma(i)})$ for all $p\in \mathcal{E}$ and all $i\in \{1,...,m\}$. In this case, we write 
	$\varphi=\varphi_{\sigma}$. 
\end{definition}

	We shall denote by $\textup{Perm}(\mathcal{E})$ the group of permutations of $\mathcal{E}$. 
	When $F:\Omega\to \mathbb{R}^{n}$ is injective, a simple verification shows that $\varphi_{\sigma}=\varphi_{\sigma'}$ 
	implies $\sigma=\sigma'$. Thus, in this case, $\textup{Perm}(\mathcal{E})$ can be regarded as a subgroup of 
	$\mathbb{S}_{m}$ via the group homomorphism $\varphi_{\sigma}\mapsto \sigma^{-1}$. 

\begin{lemma}\label{nfekwcdnmnkdnkdnkrnvknk}
	$\textup{Perm}(\mathcal{E})\subseteq \textup{Diff}(\mathcal{E},h_{F},\nabla^{(e)})$. 
\end{lemma}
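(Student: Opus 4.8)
The plan is to carry out the whole argument in the global natural chart $\theta=(\theta_{1},\dots,\theta_{n})$ of $\mathcal{E}$, using only two standard facts about this chart: that $\theta$ is an affine coordinate system for $\nabla^{(e)}$ (this is precisely why $(\mathcal{E},h_{F},\nabla^{(e)})$ is dually flat, see \cite{Amari-Nagaoka}), and that $h_{F}$ is the Hessian of $\psi$ in the $\theta$-coordinates, i.e.\ equation \eqref{ndknknknsknkn}. Fix $\sigma\in\mathbb{S}_{m}$ with $\varphi_{\sigma}\in\textup{Perm}(\mathcal{E})$. Since $\varphi_{\sigma}$ maps $\mathcal{E}$ into itself and $\theta\mapsto p_{\theta}$ is a bijection onto $\mathcal{E}$, for every $\theta\in\mathbb{R}^{n}$ there is a unique $\theta^{\sigma}\in\mathbb{R}^{n}$ with $\varphi_{\sigma}(p_{\theta})=p_{\theta^{\sigma}}$; comparing the values of these two densities at $x_{i}$, taking logarithms, and using \eqref{nekwnkenkfnk} gives
\[
	C_{i}+\langle\theta^{\sigma},F_{i}\rangle-\psi(\theta^{\sigma})=C_{\sigma(i)}+\langle\theta,F_{\sigma(i)}\rangle-\psi(\theta)\qquad(i=1,\dots,m).
\]

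First I would show that $\theta\mapsto\theta^{\sigma}$ is affine. Subtracting the identity above for two indices $i,k$ cancels the $\psi$-terms and leaves $\langle\theta^{\sigma},F_{i}-F_{k}\rangle=(C_{\sigma(i)}-C_{\sigma(k)})-(C_{i}-C_{k})+\langle\theta,F_{\sigma(i)}-F_{\sigma(k)}\rangle$. The hypothesis that $1,F^{1},\dots,F^{n}$ are linearly independent on $\Omega$ is equivalent to saying that the $m$ vectors $(1,F_{1}),\dots,(1,F_{m})$ span $\mathbb{R}^{n+1}$, whence the differences $F_{i}-F_{k}$ span $\mathbb{R}^{n}$. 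Choosing $n$ of these differences that form a basis of $\mathbb{R}^{n}$ and stacking the corresponding identities exhibits $\theta^{\sigma}$ as an explicit affine function of $\theta$, say $\theta^{\sigma}=P\theta+w$ with $P\in\textup{GL}(n,\mathbb{R})$ (invertible since $\varphi_{\sigma}$ is a diffeomorphism) and $w\in\mathbb{R}^{n}$. As $\theta$ is an affine coordinate system for $\nabla^{(e)}$, this shows $\varphi_{\sigma}\in\textup{Diff}(\mathcal{E},\nabla^{(e)})$.

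Next I would deduce that $\varphi_{\sigma}$ is an isometry of $h_{F}$. Rearranging the displayed log-identity shows that $\psi(\theta^{\sigma})-\psi(\theta)$ equals $C_{i}+\langle\theta^{\sigma},F_{i}\rangle-C_{\sigma(i)}-\langle\theta,F_{\sigma(i)}\rangle$ for any fixed $i$; since $\theta\mapsto\theta^{\sigma}$ is affine, the right-hand side is affine in $\theta$, so $\widehat{\psi}:=\psi(\theta^{\sigma})$ differs from $\psi$ by an affine function of $\theta$. Hence, using that $\varphi_{\sigma}$ is affine (so the chain rule has no term in the second derivative of $\varphi_{\sigma}$) together with \eqref{ndknknknsknkn},
\[
	(\varphi_{\sigma}^{*}h_{F})_{ij}(\theta)=\sum_{k,l=1}^{n}(h_{F})_{kl}(\theta^{\sigma})\,P_{ki}P_{lj}=\frac{\partial^{2}\widehat{\psi}}{\partial\theta_{i}\partial\theta_{j}}(\theta)=\frac{\partial^{2}\psi}{\partial\theta_{i}\partial\theta_{j}}(\theta)=(h_{F})_{ij}(\theta),
\]
so $\varphi_{\sigma}^{*}h_{F}=h_{F}$, i.e.\ $\varphi_{\sigma}\in\textup{Isom}(\mathcal{E},h_{F})$. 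Combining the last two paragraphs, $\varphi_{\sigma}\in\textup{Diff}(\mathcal{E},\nabla^{(e)})\cap\textup{Isom}(\mathcal{E},h_{F})=\textup{Diff}(\mathcal{E},h_{F},\nabla^{(e)})$, as desired.

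The only step that requires any care is the affineness of $\theta\mapsto\theta^{\sigma}$, and within it the elementary linear-algebra observation that the differences $F_{i}-F_{k}$ span $\mathbb{R}^{n}$; everything else is a routine chain-rule computation. (A more conceptual alternative would be to realize $\mathcal{E}$ as a $\nabla^{(e)}$-autoparallel submanifold of the categorical family $\mathcal{P}_{m}^{\times}$, on which the action of $\mathbb{S}_{m}$ by relabelling $\Omega$ manifestly preserves the Fisher metric and the exponential connection; this works, but requires separately recording that the intrinsic dually flat structure of $\mathcal{E}$ agrees with the one induced from $\mathcal{P}_{m}^{\times}$.)
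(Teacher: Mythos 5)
Your proof is correct, and while it shares the paper's overall skeleton (work in the natural chart, take logarithms of $\varphi_{\sigma}(p_{\theta})(x_{i})=p_{\theta}(x_{\sigma(i)})$ via \eqref{nekwnkenkfnk}, prove affineness first, then deduce the isometry property from the Hessian formula \eqref{ndknknknsknkn}), the mechanism you use for the affineness step is genuinely different. The paper differentiates the log-identity twice in $\theta$, observes that $\sum_{j}\tfrac{\partial^{2}\varphi^{j}}{\partial\theta_{a}\partial\theta_{b}}F_{i}^{j}$ equals a quantity independent of $i$, and invokes the linear independence of $1,F^{1},\dots,F^{n}$ to kill the second derivatives. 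You instead eliminate $\psi$ \emph{algebraically} by subtracting the identities at two sample points, note that the differences $F_{i}-F_{k}$ span $\mathbb{R}^{n}$ (your reduction of this to the linear independence of $1,F^{1},\dots,F^{n}$ is correct), and solve the resulting invertible linear system to exhibit $\theta\mapsto\theta^{\sigma}$ as an explicit affine map. This buys you two small things: no differentiation is needed to get affineness (so no a priori smoothness of the reparametrization is used), and you obtain the affine map $P\theta+w$ explicitly, which is essentially the system \eqref{nfeknknefknkn} that the paper only extracts later, in the proof of the reverse inclusion. Your isometry step (Hessian of $\psi\circ\varphi_{\sigma}-\psi$ vanishes because the difference is affine) is the same computation as the paper's, just phrased without carrying the intermediate term $T_{ab}$. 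No gaps.
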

\begin{proof}
	Let $\sigma\in \mathbb{S}_{m}$ and $\varphi\in \textup{Diff}(\mathcal{E})\cong\textup{Diff}(\mathbb{R}^{n})$. Suppose that
	$p_{\varphi(\theta)}(x_{i})=p_{\theta}(x_{\sigma(i)})$ for all $\theta\in \mathbb{R}^{n}$ 
	and all $i\in \{1,...,m\}$. Write $\varphi(\theta)=(\varphi^{1}(\theta),...,\varphi^{n}(\theta))$. 
	In view of \eqref{nekwnkenkfnk}, we have 
	\begin{eqnarray*}
		C_{i}+\sum_{j=1}^{n}\varphi^{j}(\theta) F^{j}_{i}-\psi(\varphi(\theta))
		=C_{\sigma(i)}+\sum_{j=1}^{n}\theta_{j} F^{j}_{\sigma(i)}-\psi(\theta)
	\end{eqnarray*}
	for all $i\in \{1,...,m\}$ and all $\theta\in \mathbb{R}^{n}$. 
	Taking the derivative with respect to $\theta_{a}$, and then with respect to $\theta_{b}$, we get 
        \begin{eqnarray}
		&& \sum_{j=1}^{n}\dfrac{\partial \varphi^{j}}{\partial \theta_{a}}(\theta) F^{j}_{i}
		-\sum_{k=1}^{n}\dfrac{\partial \psi}{\partial \theta_{k}}(\varphi(\theta))
			\dfrac{\partial \varphi^{k}}{\partial \theta_{a}}(\theta)
			=F_{\sigma(i)}^{a}-\dfrac{\partial \psi}{\partial \theta_{a}}(\theta) \nonumber\\
		&\Rightarrow  &  \sum_{j=1}^{n}\dfrac{\partial^{2}\varphi^{j}}{\partial \theta_{a}\partial \theta_{b}}F_{i}^{j}
			-\sum_{k,l=1}^{n}\dfrac{\partial^{2}\psi}{\partial \theta_{k}\partial 
			\theta_{l}}(\varphi(\theta))\dfrac{\partial\varphi^{l}}{\partial \theta_{b}}
			\dfrac{\partial \varphi^{k}}{\partial \theta_{a}}-\sum_{k=1}^{n}\dfrac{\partial \psi}{\partial \theta_{k}}
			(\varphi(\theta))
			\dfrac{\partial^{2}\varphi^{k}}{\partial \theta_{a}\partial \theta_{b}} 
			= -\dfrac{\partial^{2}\psi}{\partial \theta_{a}\partial \theta_{b}}\nonumber\\
		&\Rightarrow& 
			\sum_{j=1}^{n}\dfrac{\partial^{2}\varphi^{j}}{\partial \theta_{a}\partial \theta_{b}}F_{i}^{j}
			=\underbrace{\sum_{k,l=1}^{n}(h_{F})_{kl}(\varphi(\theta))\dfrac{\partial\varphi^{l}}{\partial \theta_{b}}
			\dfrac{\partial \varphi^{k}}{\partial \theta_{a}}+\sum_{k=1}^{n}\dfrac{\partial \psi}{\partial 
			\theta_{k}}(\varphi(\theta))
			\dfrac{\partial^{2}\varphi^{k}}{\partial \theta_{a}\partial \theta_{b}}  -(h_{F})_{ab}(\theta)}_{=:T_{ab}(\theta)}.
			\nonumber
			\\\label{nfenkrnkenk}
	\end{eqnarray}
	where we have used \eqref{ndknknknsknkn}. It follows that for all $a,b,i,i'\in \{1,...,m\}$ and all $\theta\in \mathbb{R}$, 
        \begin{eqnarray*}
		\sum_{j=1}^{n}\dfrac{\partial^{2}\varphi^{j}}{\partial \theta_{a}\partial \theta_{b}}(F_{i}^{j}-F_{i'}^{j})=
		T_{ab}(\theta)-T_{ab}(\theta)=0. 
	\end{eqnarray*}
	Because the functions $1,F^{1},...,F^{n}:\Omega\to \mathbb{R}$ are linearly independent, this implies 
        \begin{eqnarray*}
		\dfrac{\partial^{2}\varphi^{j}}{\partial \theta_{a}\partial \theta_{b}}(\theta)=0
	\end{eqnarray*}
	for all $a,b,j\in \{1,...,n\}$ and all $\theta\in \mathbb{R}^{n}$. Thus $\varphi$ is of the form $\varphi(\theta)=A\theta+B$, 
	where $A$ is a $n\times n$ real matrix and $B\in \mathbb{R}^{n}$. This shows that $\varphi$ is affine with respect to $\nabla^{(e)}$. 
	Using this, we can rewrite \eqref{nfenkrnkenk} as 
        \begin{eqnarray*}
		\sum_{k,l=1}^{n}(h_{F})_{kl}(\varphi(\theta))\dfrac{\partial\varphi^{l}}{\partial \theta_{b}}
			\dfrac{\partial \varphi^{k}}{\partial \theta_{a}}=(h_{F})_{ab}(\theta),
	\end{eqnarray*}
	which shows that $\varphi$ is an isometry. The lemma follows.
\end{proof}

\begin{lemma}\label{nenkdfeknfknfk}
	If $F:\Omega\to \mathbb{R}^{n}$ is injective, 
	then $\textup{Diff}(\mathcal{E},h_{F},\nabla^{(e)})\subseteq \textup{Perm}(\mathcal{E})$.  
\end{lemma}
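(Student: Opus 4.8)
The plan is to show that an arbitrary $\varphi\in\textup{Diff}(\mathcal{E},h_{F},\nabla^{(e)})$ is a permutation $\varphi_{\sigma}$, working throughout in the global $\theta$-chart, which is an affine chart for $\nabla^{(e)}$ (this is the defining feature of the exponential connection; it is the same identification $\textup{Diff}(\mathcal{E})\cong\textup{Diff}(\mathbb{R}^{n})$ used in Lemma \ref{nfekwcdnmnkdnkdnkrnvknk}). First I would record that, being affine with respect to $\nabla^{(e)}$, in this chart $\varphi$ is a genuine affine map: there exist $A\in\textup{GL}(n,\mathbb{R})$ and $B\in\mathbb{R}^{n}$ with $\varphi(\theta)=A\theta+B$.

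Next I would extract the key consequence of the isometry condition. Since $\varphi^{k}(\theta)=\sum_{l}A_{kl}\theta_{l}+B_{k}$, we have $\partial_{i}\varphi^{k}=A_{ki}$ and the second derivatives of the $\varphi^{k}$ vanish, so the pullback identity $(\varphi^{*}h_{F})_{\theta}=(h_{F})_{\theta}$ together with the Hessian formula \eqref{ndknknknsknkn} yields
\begin{eqnarray*}
	\partial_{i}\partial_{j}(\psi\circ\varphi)(\theta)=\sum_{k,l=1}^{n}A_{ki}A_{lj}\,(h_{F})_{kl}(\varphi(\theta))=(h_{F})_{ij}(\theta)=\partial_{i}\partial_{j}\psi(\theta)
\end{eqnarray*}
for all $\theta\in\mathbb{R}^{n}$ and all $i,j$. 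Hence $\psi\circ\varphi-\psi$ has vanishing Hessian on the connected set $\mathbb{R}^{n}$, so it is affine: there are $c\in\mathbb{R}^{n}$ and $d\in\mathbb{R}$ with $\psi(A\theta+B)=\psi(\theta)+\langle c,\theta\rangle+d$ for all $\theta$. Exponentiating and using \eqref{nknknrkfenknk}, this becomes the identity of exponential sums
\begin{eqnarray*}
	\sum_{j=1}^{m}e^{C_{j}+\langle B,F_{j}\rangle}\,e^{\langle\theta,\transposee{A}F_{j}\rangle}=\sum_{i=1}^{m}e^{C_{i}+d}\,e^{\langle\theta,F_{i}+c\rangle},
\end{eqnarray*}
valid for all $\theta\in\mathbb{R}^{n}$.

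The main step is then to read a permutation off this identity, and this is where I expect the real work to be. Because $F$ is injective and $A$ is invertible, the vectors $\transposee{A}F_{1},\dots,\transposee{A}F_{m}$ are pairwise distinct, and likewise $F_{1}+c,\dots,F_{m}+c$; this is exactly the point at which the injectivity hypothesis on $F$ is used. Invoking the linear independence over $\mathbb{R}$ of the functions $\theta\mapsto e^{\langle\theta,v\rangle}$ attached to distinct vectors $v$, and noting that all coefficients above are strictly positive, the equality forces a bijection $\sigma\in\mathbb{S}_{m}$ with $\transposee{A}F_{i}=F_{\sigma(i)}+c$ and $C_{i}+\langle B,F_{i}\rangle=C_{\sigma(i)}+d$ for every $i$.

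Finally I would substitute back: using $\langle A\theta+B,F_{i}\rangle=\langle B,F_{i}\rangle+\langle\theta,\transposee{A}F_{i}\rangle$, the relation $\psi(\varphi(\theta))=\psi(\theta)+\langle c,\theta\rangle+d$, and the two identities just obtained,
\begin{eqnarray*}
	p_{\varphi(\theta)}(x_{i})=e^{(C_{i}+\langle B,F_{i}\rangle-d)+\langle\theta,\transposee{A}F_{i}-c\rangle-\psi(\theta)}=e^{C_{\sigma(i)}+\langle\theta,F_{\sigma(i)}\rangle-\psi(\theta)}=p_{\theta}(x_{\sigma(i)})
\end{eqnarray*}
for all $\theta\in\mathbb{R}^{n}$ and all $i$, so $\varphi=\varphi_{\sigma}\in\textup{Perm}(\mathcal{E})$. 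The delicate point is the linear-independence-of-characters argument: one needs both the distinctness of the exponents (to apply it) and a careful matching of coefficients (to conclude a genuine bijection of $\{1,\dots,m\}$ rather than a mere equality of exponent sets).
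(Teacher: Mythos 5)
Your proposal is correct and follows essentially the same route as the paper's proof: write $\varphi(\theta)=A\theta+B$ in the affine $\theta$-chart, use the isometry condition and the Hessian formula \eqref{ndknknknsknkn} to show $\psi\circ\varphi-\psi$ is affine, exponentiate via \eqref{nknknrkfenknk} to obtain an identity of exponential sums, and apply the linear independence of characters (the paper's Lemma \ref{dnnknefknkefnk}) together with the injectivity of $F$ and invertibility of $A$ to extract the permutation $\sigma$ and conclude $p_{\varphi(\theta)}(x_{i})=p_{\theta}(x_{\sigma(i)})$. The only cosmetic difference is which side of the equation carries the linear correction term, and your closing remark about needing positivity of the coefficients and equal cardinality of the two lists of distinct exponents to get a genuine bijection is a point the paper leaves implicit.
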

\begin{proof}
	Let $\varphi\in \textup{Diff}(\mathcal{E},h_{F},\nabla^{(e)})$ be arbitrary. 
	Since $(h_{F})_{ab}$ is the Hessian of $\psi$, we have
	\begin{eqnarray*}
		\lefteqn{\dfrac{\partial^{2}}{\partial \theta_{a}\partial \theta_{b}}\big[\psi(\theta)-\psi(\varphi(\theta))\big]}\\
		&=& (h_{F})_{ab}- 
		\underbrace{\sum_{k,l=1}^{n}(h_{F})_{kl}(\varphi(\theta))\dfrac{\partial\varphi^{l}}{\partial \theta_{b}}
			\dfrac{\partial \varphi^{k}}{\partial \theta_{a}}}_{=(h_{F})_{ab}}
			-\sum_{l=1}^{n}\dfrac{\partial \psi}{\partial \theta_{l}}(\varphi(\theta))
			\underbrace{\dfrac{\partial^{2}\varphi^{l}}{\partial \theta_{a}\partial \theta_{b}}}_{=0}\\
		&=& (h_{F})_{ab}-(h_{F})_{ab}=0.
		\end{eqnarray*}
	Note that we have used the facts that $\varphi$ is isometric and affine. 
	It follows that there is $u\in \mathbb{R}^{n}$ and $c\in \mathbb{R}$ such that
	\begin{eqnarray}\label{nkwdnkfeknkn}
		\psi(\theta)-\psi(\varphi(\theta))=\langle u,\theta\rangle+c
	\end{eqnarray}
	for all $\theta\in \mathbb{R}$. In view of \eqref{nknknrkfenknk}, this can be rewritten as 
	\begin{eqnarray*}
		\ln\bigg[\sum_{i=1}^{m}e^{C_{i}+\langle \theta,F_{i}\rangle}\bigg]
		-\ln\bigg[\sum_{i=1}^{m}e^{C_{i}+\langle \varphi(\theta),F_{i}\rangle}\bigg]=\langle u,\theta\rangle+c
	\end{eqnarray*}
	from which we obtain 
	\begin{eqnarray}\label{ncekwdnkenfknk}
		\sum_{i=1}^{m}e^{C_{i}+\langle \theta,F_{i}\rangle}=
		\sum_{i=1}^{m}e^{C_{i}+\langle \varphi(\theta),F_{i}\rangle+\langle u,\theta\rangle+c}.
	\end{eqnarray}
	Since $\varphi$ is affine with respect to $\nabla^{(e)}$, there 
	are an invertible $n\times n$ real matrix $A$ and $B\in \mathbb{R}^{n}$ such that $\varphi(\theta)=A\theta+B$ 
	for all $\theta\in \mathbb{R}$. Thus \eqref{ncekwdnkenfknk} can be rewritten as
	\begin{eqnarray}\label{nceknkenfknk}
		\sum_{i=1}^{m}e^{C_{i}+\langle \theta,F_{i}\rangle}= 
		\sum_{i=1}^{m}e^{C_{i}+\langle B,F_{i}\rangle +c +\langle \theta,A^{T}F_{i}+u\rangle }, 
	\end{eqnarray}
	where $A^{T}$ is the transpose of $A$. Now, the fact that $F$ is injective and $A$ is invertible implies that 
	$F_{i}\neq F_{j}$ and $A^{T}F_{i}+u\neq A^{T}F_{j}+u$ whenever $i\neq j$. 
	It follows from this, \eqref{nceknkenfknk} and Lemma \ref{dnnknefknkefnk} (see below) that 
	there is $\sigma\in \mathbb{S}_{m}$ such that
	\begin{eqnarray}\label{nfeknknefknkn}
	\left\lbrace
	\begin{array}{lll}
		F_{\sigma(i)} &=&  A^{T}F_{i}+u, \\
		C_{\sigma(i)} &=&  C_{i}+\langle B,F_{i}\rangle +c 
	\end{array}
	\right.  
	\end{eqnarray}
	for all $i\in \{1,...,m\}$. These equations, together with \eqref{nekwnkenkfnk} and \eqref{nkwdnkfeknkn}, imply that 
	\begin{eqnarray*}
		p_{A\theta+B}(x_{i})&=& e^{C_{i}+\langle A\theta+B,F_{i}\rangle-\psi(A\theta+B)}\\
		&=& e^{C_{i}+\langle A\theta+B,F_{i}\rangle+\langle u,\theta\rangle+c-\psi(\theta)}\\ 
		&=& e^{{C_{i}+\langle B,F_{i}\rangle +c}+\langle \theta,
		{A^{T}F_{i}+u}\rangle-\psi(\theta)}\\ 
		&=& e^{C_{\sigma(i)}+\langle \theta,F_{\sigma(i)}\rangle-\psi(\theta)}\\
		&=& p_{\theta}(x_{\sigma(i)}). 
	\end{eqnarray*}
	Thus $\varphi$ is a permutation of $\mathcal{E}$. The lemma follows.  
\end{proof}

\begin{lemma}\label{dnnknefknkefnk}
	If $v_{1},...,v_{m}$ are distinct vectors in $\mathbb{R}^{n}$, then the functions $e^{\langle x,v_{1}\rangle}$, 
	..., $e^{\langle x,v_{m}\rangle}$ ($x\in \mathbb{R}^{n}$) are linearly independent over $\mathbb{R}$.
\end{lemma}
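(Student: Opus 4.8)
The plan is to reduce the $n$-variable statement to the classical one-dimensional fact about exponentials, via a generic linear substitution, and then to prove that one-dimensional fact by applying a constant-coefficient differential operator that isolates one exponential at a time.

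First I would choose a vector $w\in\mathbb{R}^{n}$ such that the real numbers $\lambda_{i}:=\langle w,v_{i}\rangle$, $i=1,\dots,m$, are pairwise distinct. This is possible: for $i\neq j$ the set $\{w\in\mathbb{R}^{n}\,\vert\,\langle w,v_{i}-v_{j}\rangle=0\}$ is a proper linear subspace of $\mathbb{R}^{n}$ (since $v_{i}\neq v_{j}$), and a finite union of proper linear subspaces cannot cover $\mathbb{R}^{n}$, so some $w$ avoids all of them.

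Next, suppose $\sum_{i=1}^{m}c_{i}\,e^{\langle x,v_{i}\rangle}=0$ for all $x\in\mathbb{R}^{n}$, with $c_{i}\in\mathbb{R}$. Let $D$ denote the directional derivative along $w$, so that $D\,e^{\langle x,v_{i}\rangle}=\lambda_{i}\,e^{\langle x,v_{i}\rangle}$. Fix $k\in\{1,\dots,m\}$ and apply the operator $\prod_{j\neq k}\bigl(D-\lambda_{j}\,\textup{Id}\bigr)$ to the identity. Each factor $D-\lambda_{j}\,\textup{Id}$ annihilates the term $e^{\langle x,v_{j}\rangle}$ and multiplies $e^{\langle x,v_{i}\rangle}$ by $\lambda_{i}-\lambda_{j}$; hence the composition sends the left-hand side to $c_{k}\bigl(\prod_{j\neq k}(\lambda_{k}-\lambda_{j})\bigr)e^{\langle x,v_{k}\rangle}$, which therefore vanishes identically on $\mathbb{R}^{n}$. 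Since the $\lambda_{i}$ are pairwise distinct the product $\prod_{j\neq k}(\lambda_{k}-\lambda_{j})$ is nonzero, and since $e^{\langle x,v_{k}\rangle}>0$ everywhere, we conclude $c_{k}=0$. As $k$ was arbitrary, all coefficients vanish, which proves linear independence.

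There is no genuine obstacle here; the only point that merits a word of justification is the existence of the vector $w$, i.e. the elementary fact that $\mathbb{R}^{n}$ is not a finite union of proper linear subspaces. (Alternatively, one could substitute $x=tw$ and invoke invertibility of a Vandermonde matrix in $\lambda_{1},\dots,\lambda_{m}$, but the differential-operator argument above avoids any computation.)
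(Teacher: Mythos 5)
Your proof is correct and follows essentially the same route as the paper: both arguments pick a generic direction $w$ (the paper calls it $\xi$, avoiding the hyperplanes $\langle \cdot,v_{i}-v_{j}\rangle=0$ by a measure-zero argument rather than your union-of-proper-subspaces argument, which is an immaterial difference) so that the numbers $\langle w,v_{i}\rangle$ are pairwise distinct, and then reduce to the one-dimensional statement. The only difference is that the paper simply cites the one-variable case as well known, whereas you make it self-contained by applying the operator $\prod_{j\neq k}(D-\lambda_{j}\,\textup{Id})$; that step is valid and arguably an improvement in completeness.
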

\begin{proof}
	Let $\lambda_{1},...,\lambda_{m}\in \mathbb{R}$ be such that 
	\begin{eqnarray}\label{neknekekfnk}
		\lambda_{1}e^{\langle x,v_{1}\rangle}+...+ \lambda_{m}e^{\langle x,v_{m}\rangle}=0
	\end{eqnarray}
	for all $x\in \mathbb{R}^{m}$. We must show that $\lambda_{1}=...=\lambda_{m}=0$. 
	Given a pair of distinct indices $i,j\in \{1,...,m\}$, let 
	$E_{ij}=\{u\in \mathbb{R}^{n}\,\,\vert\,\,\langle u,v_{i}-v_{j}\rangle=0\}$.
	Since $v_{i}-v_{j}\neq 0$, $\textup{dim}(E_{ij})=n-1$, and so $E_{ij}$ has measure zero in $\mathbb{R}^{n}$. 
	Since a countable union of sets of measure zero has measure zero, $E=\cup_{i\neq j}E_{ij}$ 
	has measure zero in $\mathbb{R}^{n}$. This implies $\mathbb{R}^{n}-E\neq \emptyset.$  
	Let $\xi\in \mathbb{R}^{n}-E$	be arbitrary. This choice guarantees that $\langle \xi,v_{i}\rangle
	\neq \langle \xi,v_{j}\rangle$ whenever $i\neq j$. Substituting $x=t\xi$, where $t\in \mathbb{R}$, 
	into \eqref{neknekekfnk} yields 
	\begin{eqnarray}\label{neknekekdndnmnfmfnk}
		\lambda_{1}e^{t\langle \xi,v_{1}\rangle}+...+ \lambda_{m}e^{t\langle \xi,v_{m}\rangle}=0.
	\end{eqnarray}
	Now, it is well known that the lemma is true when $n=1$, and thus the 
	functions $e^{t\langle \xi,v_{1}\rangle}$, ..., 
	$e^{t\langle \xi,v_{m}\rangle}$ are linearly independent over $\mathbb{R}$. In view of \eqref{neknekekdndnmnfmfnk}, 
	this implies that $\lambda_{1}=...=\lambda_{m}=0$, as desired.  
\end{proof}

	The following result is an immediate consequence of Lemmas $\ref{nfekwcdnmnkdnkdnkrnvknk}$ 
	and \ref{nenkdfeknfknfk}.

\begin{proposition}\label{nfekwnknfeknk}
	Let $\mathcal{E}$ be an exponential family defined over a finite set $\Omega=\{x_{1},...,x_{m}\}$, 
	with elements of the form $p(x;\theta)=e^{C(x)+\langle \theta,F(x)\rangle-\psi(\theta)}$, 
	where $\theta\in \mathbb{R}^{n}$, $C:\Omega\to \mathbb{R}$, $F:\Omega\to \mathbb{R}^{n}$, 
	and $\psi:\mathbb{R}^{n}\to \mathbb{R}$. If $F:\Omega\to \mathbb{R}^{n}$ is injective, 
	then $\textup{Diff}(\mathcal{E},h_{F},\nabla^{(e)})=\textup{Perm}(\mathcal{E})$.  
\end{proposition}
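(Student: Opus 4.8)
The plan is to establish the claimed equality by proving the two inclusions separately, since all the analytic work has already been carried out in the two preceding lemmas. The inclusion $\textup{Perm}(\mathcal{E})\subseteq\textup{Diff}(\mathcal{E},h_F,\nabla^{(e)})$ is Lemma~\ref{nfekwcdnmnkdnkdnkrnvknk} and requires no hypothesis on $F$; the reverse inclusion $\textup{Diff}(\mathcal{E},h_F,\nabla^{(e)})\subseteq\textup{Perm}(\mathcal{E})$ is Lemma~\ref{nenkdfeknfknfk}, which is precisely where the injectivity of $F$ is used. Combining the two yields $\textup{Diff}(\mathcal{E},h_F,\nabla^{(e)})=\textup{Perm}(\mathcal{E})$, which is the assertion.

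For the first inclusion I would recall the mechanism: if $\varphi=\varphi_\sigma$ is a permutation, then writing $p_{\varphi(\theta)}(x_i)=p_\theta(x_{\sigma(i)})$ in the form \eqref{nekwnkenkfnk} and differentiating twice with respect to the natural parameters, the terms involving $\psi$ and its derivatives assemble into a quantity independent of $i$; subtracting the identities for two different outcomes and invoking the linear independence of $1,F^1,\dots,F^n$ forces $\partial^2\varphi^j/\partial\theta_a\partial\theta_b\equiv 0$, so $\varphi(\theta)=A\theta+B$ is affine with respect to $\nabla^{(e)}$. Feeding this back into the twice-differentiated identity leaves exactly the transformation law $\sum_{k,l}(h_F)_{kl}(\varphi(\theta))\,\partial_b\varphi^l\,\partial_a\varphi^k=(h_F)_{ab}(\theta)$, i.e.\ $\varphi^*h_F=h_F$.

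For the reverse inclusion I would argue as in Lemma~\ref{nenkdfeknfknfk}: given $\varphi\in\textup{Diff}(\mathcal{E},h_F,\nabla^{(e)})$, affineness together with the isometry condition makes the Hessian of $\theta\mapsto\psi(\theta)-\psi(\varphi(\theta))$ vanish, so this difference equals $\langle u,\theta\rangle+c$ for some $u\in\mathbb{R}^n$, $c\in\mathbb{R}$; exponentiating via \eqref{nknknrkfenknk} produces an identity between two sums of exponentials whose exponents are, up to constants, $\langle\theta,F_i\rangle$ on one side and $\langle\theta,A^{T}F_i+u\rangle$ on the other, where $\varphi(\theta)=A\theta+B$ with $A$ invertible. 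Since $F$ is injective and $A$ is invertible, both families $\{F_i\}$ and $\{A^{T}F_i+u\}$ consist of pairwise distinct vectors, so Lemma~\ref{dnnknefknkefnk} forces the two sums to agree only via a permutation $\sigma\in\mathbb{S}_m$ matching the exponents term by term; this gives $F_{\sigma(i)}=A^{T}F_i+u$ and $C_{\sigma(i)}=C_i+\langle B,F_i\rangle+c$. Substituting these into $p_{A\theta+B}(x_i)$ and simplifying with \eqref{nkwdnkfeknkn} yields $p_{A\theta+B}(x_i)=p_\theta(x_{\sigma(i)})$, so $\varphi=\varphi_\sigma\in\textup{Perm}(\mathcal{E})$.

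The only genuinely delicate point — and the place where the hypothesis is indispensable — is the exponential-matching step, which needs the frequency vectors on both sides to be pairwise distinct; this is exactly guaranteed by the injectivity of $F$ together with the invertibility of $A$ (automatic, since $\varphi$ is a diffeomorphism). Without injectivity of $F$ the reverse inclusion cannot even be stated cleanly, since $\textup{Perm}(\mathcal{E})$ need not embed in $\mathbb{S}_m$. So I do not expect any real obstacle beyond careful bookkeeping of the constants in the exponents, all of which is already handled in the lemmas; the proof of the proposition itself is a one-line combination of the two inclusions.
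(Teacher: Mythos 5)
Your proposal is correct and follows exactly the paper's route: the proposition is stated there as an immediate consequence of Lemma \ref{nfekwcdnmnkdnkdnkrnvknk} (which gives $\textup{Perm}(\mathcal{E})\subseteq\textup{Diff}(\mathcal{E},h_{F},\nabla^{(e)})$ with no hypothesis on $F$) and Lemma \ref{nenkdfeknfknfk} (which gives the reverse inclusion using injectivity of $F$ via the exponential-matching Lemma \ref{dnnknefknkefnk}). Your recap of the two lemmas' mechanisms also matches the paper's arguments, so there is nothing to add.
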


\begin{example}
	If $\mathcal{E}=\mathcal{P}_{n}^{\times}$ (see Example \ref{exa:5.5}), then $\textup{Perm}(\mathcal{P}_{n}^{\times})\cong 
	\mathbb{S}_{n}$, and so $\textup{Diff}(\mathcal{P}_{n}^{\times},h_{F},\nabla^{(e)})\cong \mathbb{S}_{n}$ 
	by Proposition \ref{nfekwnknfeknk}.
\end{example}

\begin{example}\label{nfeknkneknknk}
	Suppose $\textup{dim}(\mathcal{E})=1$ and $F(x_{i})<F(x_{j})$ whenever $i<j$. 
	Then $\textup{Diff}(\mathcal{E},h_{F},\nabla^{(e)})$ is either trivial, or it is $\{\textup{Id}_{\mathcal{E}},\varphi\}$, 
	where $\textup{Id}_{\mathcal{E}}$ is the identity map on $\mathcal{E}$ and $\varphi(p)(x_{i})=p(x_{m-i+1})$. 
	To see this, let $\varphi_{\sigma}\in \textup{Diff}(M,h_{F},\nabla^{(e)})$ 
	be arbitrary, where $\sigma\in \mathbb{S}_{m}$. 
	By inspection of the proof of Lemma \ref{nenkdfeknfknfk} (more specifically \eqref{nfeknknefknkn}), the permutation 
	$\sigma$, regarded as a map $\{1,...,m\}\to \{1,...,m\}$, is strictly increasing or strictly decreasing, 
	and thus $\sigma(k)=k$ or $\sigma(k)=m-k+1$ for all $k\in \{1,...,m\}$.  	
\end{example}

\begin{example}
	When $\mathcal{E}=\mathcal{B}(n)$ (see Example \ref{exa:5.6}), it is easy to check that $\varphi(p)(k)=p(n-k)$ is a permutation of 
	$\mathcal{B}(n)$. Thus $\textup{Perm}(\mathcal{B}(n))$ is not trivial. By Example \ref{nfeknkneknknk},
	$\textup{Diff}(\mathcal{B}(n),h_{F},\nabla^{(e)})\cong \mathbb{S}_{2}$.
\end{example}


	We end this section with a simple observation which is a direct consequence of the 
	proof of Lemma \ref{nenkdfeknfknfk} (more specifically \eqref{nfeknknefknkn}) 
	and Proposition \ref{nfekwnknfeknk}. Let $\mathcal{A}_{\mathcal{E}}$ be the vector space 
	of functions $X:\Omega\to \mathbb{R}$ generated 
	by $1,F^{1},...,F^{n}$. 
\begin{lemma}\label{nceknfkrnkenfekn}
	Suppose $F$ injective. Then the group $\textup{Diff}(\mathcal{E},h_{F},\nabla^{(e)})=\textup{Perm}(\mathcal{E})$ 
	acts linearly on $\mathcal{A}_{\mathcal{E}}$ via the formula
	$\big[\varphi_{\sigma}\cdot X\big](x_{i})= X(x_{\sigma(i)})$.
\end{lemma}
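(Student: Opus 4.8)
The plan is to verify, in order: (i) that the formula $[\varphi_{\sigma}\cdot X](x_{i})=X(x_{\sigma(i)})$ is unambiguous, (ii) that $\varphi_{\sigma}\cdot X$ lies in $\mathcal{A}_{\mathcal{E}}$ whenever $X$ does, and (iii) that the assignment $\varphi_{\sigma}\mapsto\bigl(X\mapsto\varphi_{\sigma}\cdot X\bigr)$ defines a linear action of the group $\textup{Diff}(\mathcal{E},h_{F},\nabla^{(e)})$.

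For (i), since $F$ is injective, Proposition \ref{nfekwnknfeknk} gives $\textup{Diff}(\mathcal{E},h_{F},\nabla^{(e)})=\textup{Perm}(\mathcal{E})$, and the remark preceding Lemma \ref{nfekwcdnmnkdnkdnkrnvknk} shows that in this case each element of the group equals $\varphi_{\sigma}$ for a \emph{unique} $\sigma\in\mathbb{S}_{m}$; hence the right-hand side of the defining formula depends only on $\varphi_{\sigma}$. Linearity of $X\mapsto\varphi_{\sigma}\cdot X$ is immediate from the pointwise definition, so the only genuine content is step (ii).

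For (ii), it suffices by linearity to check that the generators $1,F^{1},\dots,F^{n}$ of $\mathcal{A}_{\mathcal{E}}$ are sent into $\mathcal{A}_{\mathcal{E}}$. One has $[\varphi_{\sigma}\cdot 1](x_{i})=1$, so $\varphi_{\sigma}\cdot 1=1\in\mathcal{A}_{\mathcal{E}}$. For the coordinate functions $F^{j}$ I would invoke the relations obtained in the proof of Lemma \ref{nenkdfeknfknfk}, namely \eqref{nfeknknefknkn}: there exist an invertible matrix $A$ and a vector $u\in\mathbb{R}^{n}$ with $F_{\sigma(i)}=A^{T}F_{i}+u$ for all $i$. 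Reading this componentwise gives $[\varphi_{\sigma}\cdot F^{j}](x_{i})=F^{j}_{\sigma(i)}=\sum_{k=1}^{n}A_{kj}F^{k}_{i}+u_{j}$, that is, $\varphi_{\sigma}\cdot F^{j}=\sum_{k=1}^{n}A_{kj}F^{k}+u_{j}\cdot 1\in\mathcal{A}_{\mathcal{E}}$. Thus $\varphi_{\sigma}\cdot(-)$ restricts to a linear endomorphism of $\mathcal{A}_{\mathcal{E}}$. This is the step I expect to be the crux: without the affine relation $F_{\sigma(i)}=A^{T}F_{i}+u$ there is no reason for $\varphi_{\sigma}\cdot X$ to stay inside the span of $1,F^{1},\dots,F^{n}$ rather than land in the larger space of all functions $\Omega\to\mathbb{R}$.

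For (iii), from $\varphi_{\sigma}(p)(x_{i})=p(x_{\sigma(i)})$ a direct computation yields $\varphi_{\sigma}\circ\varphi_{\tau}=\varphi_{\tau\circ\sigma}$, and then $[(\varphi_{\sigma}\circ\varphi_{\tau})\cdot X](x_{i})=X(x_{\tau(\sigma(i))})=[\varphi_{\sigma}\cdot(\varphi_{\tau}\cdot X)](x_{i})$ for every $i$, while the identity permutation acts as the identity map; combined with the linearity noted above this shows the formula defines a linear action of $\textup{Diff}(\mathcal{E},h_{F},\nabla^{(e)})=\textup{Perm}(\mathcal{E})$ on $\mathcal{A}_{\mathcal{E}}$, as claimed. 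Everything here beyond the containment $\varphi_{\sigma}\cdot F^{j}\in\mathcal{A}_{\mathcal{E}}$ is routine bookkeeping.
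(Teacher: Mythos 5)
Your proposal is correct and takes essentially the same route the paper intends: the paper presents this lemma as a direct consequence of \eqref{nfeknknefknkn} and Proposition \ref{nfekwnknfeknk}, and you correctly identify the crux as the invariance of $\mathcal{A}_{\mathcal{E}}$, which you derive from the affine relation $F_{\sigma(i)}=A^{T}F_{i}+u$ exactly as the paper's pointer to the proof of Lemma \ref{nenkdfeknfknfk} suggests. The remaining points (well-definedness via uniqueness of $\sigma$ when $F$ is injective, and the action axioms via $\varphi_{\sigma}\circ\varphi_{\tau}=\varphi_{\tau\circ\sigma}$) are handled correctly and match the paper's setup.
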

	
	Physical applications of this result are discussed in \cite{molitor-spectral}.

\begin{footnotesize}\bibliography{bibtex}\end{footnotesize}
\end{document}